\newcommand{\NN}{\mathbb N} 
\newcommand{\RR}{\mathbb R}
\newcommand{\ml}{\mathcal{L}}
\DeclareMathAlphabet{\mathpzc}{OT1}{pzc}{m}{it}
\newtheorem{theorem}{Theorem}[section]
\newtheorem{corollary}[theorem]{Corollary}
\newtheorem{lemma}[theorem]{Lemma}
\newtheorem{proposition}[theorem]{Proposition}
\newtheorem{definition}[theorem]{Definition}
\newtheorem{remark}[theorem]{Remark}
\numberwithin{equation}{section}
\newcommand{\unb}{\mathbb{B}_1}
\newcommand{\cunb}{\bar{\mathbb{B}}_1}
\newcommand{\uns}{\partial\mathbb{B}_1}
\newcommand{\rd}{\mathrm{d}}
\begin{document}
\title{A fourth-order model for MEMS with\\ clamped boundary conditions} 

\author{Philippe Lauren\c{c}ot}
\address{Institut de Math\'ematiques de Toulouse, CNRS UMR~5219, Universit\'e de Toulouse \\ F--31062 Toulouse Cedex 9, France}
\email{laurenco@math.univ-toulouse.fr}

\author{Christoph Walker}
\address{Leibniz Universit\"at Hannover\\ Institut f\" ur Angewandte Mathematik \\ Welfengarten 1 \\ D--30167 Hannover\\ Germany}
\email{walker@ifam.uni-hannover.de}

\keywords{MEMS, fourth-order diffusion, beam equation, positivity, quenching, touchdown}
\subjclass{35J40, 35J75, 35J91, 35L75, 35K35, 74F15}

\date{\today}

\begin{abstract}
The dynamical and stationary behaviors of a fourth-order equation in the unit ball with clamped boundary conditions and a singular reaction term are investigated.  The equation arises in the modeling of microelectromechanical systems (MEMS) and includes a positive voltage parameter $\lambda$. It is shown that there is a threshold value $\lambda_*>0$ of the voltage parameter such that no radially symmetric stationary solution exists for $\lambda>\lambda_*$, while at least two  such solutions exist for $\lambda\in (0,\lambda_*)$. Local and global well-posedness results are obtained for the corresponding hyperbolic and parabolic evolution  problems as well as the occurrence of finite time singularities  when $\lambda>\lambda_*$.
\end{abstract}

\maketitle

%
%
\pagestyle{myheadings}
\markboth{\sc{Ph. Lauren\c{c}ot and Ch. Walker}}{\sc{A fourth-order MEMS model with clamped boundary conditions}}

\section{Introduction}\label{sec1}

Electrostatically actuated microelectromechanical systems (MEMS) are microscopic devices which combine mechanical and electrostatic effects. A typical MEMS device is made of a rigid conducting ground plate above which a clamped deformable membrane coated with a thin conducting film is suspended. Application of a voltage difference induces a Coulomb force which, in turn, generates a displacement of the membrane. An ubiquitous feature of such devices is, that when the applied voltage exceeds a certain threshold value, the membrane might collapse (or touch down) on the ground plate. Controlling the occurrence of this phenomenon -- usually referred to as the ``pull-in'' instability -- is of utmost practical importance in the design of such devices either to set up optimal operating conditions or to avoid device damaging. Mathematical models have been derived to describe MEMS devices which lead to free boundary problems due to the  deformable membrane \cite{BP07}. Since these models are difficult to analyze mathematically (though recent contributions can be found in \cite{Ci07, ELW1, LW13}), one often takes advantage of the small aspect ratio of the devices to reduce the free boundary problem to a single equation for the displacement, see \cite{BP07}. More precisely, the small aspect ratio model describing the dynamics of the displacement $u=u(t,x)$ of the membrane $\Omega\subset\RR^d$ reads
\begin{align}
\gamma^2\partial_t^2 u+\partial_t u +B\Delta^2 u- T\Delta u&=-\frac{\lambda}{(1+u)^2}\ , &t>0\ , & & x\in \Omega\ ,\label{hyper1}\\
u=\partial_\nu u&=0\ ,& t>0\ ,& & x\in \partial\Omega\ ,\label{hyper2}\\
u(0,\cdot)=u^0\ ,\quad \partial_t u(0,\cdot)&= u^1\ ,& && x\in \Omega\ .\label{hyper3}
\end{align}
Here, $\gamma^2\partial_t^2 u$ and $\partial_t u$ account, respectively, for inertia and damping effects, $B\Delta^2 u$ and $- T\Delta u$ are due to bending and stretching of the membrane, while $-\lambda(1+u)^{-2}$ reflects the action of the electrostatic forces in the small aspect ratio limit. The parameter $\lambda$ is proportional to the square of the applied voltage. Observe that the right-hand side of \eqref{hyper1} features a singularity when $u=-1$, which corresponds to the touchdown phenomenon already mentioned above. Since the strength of the singular reaction term is tuned by the parameter $\lambda$, it is not surprising that the latter governs the existence of stationary solutions, that is, solutions to
\begin{align}
B\Delta^2 u- T\Delta u&=-\frac{\lambda}{(1+u)^2}\ , & x\in \Omega\ ,\label{hyper1s}\\
u=\partial_\nu u&=0\ ,&  x\in \partial\Omega\ .\label{hyper2s}
\end{align}
When bending is neglected, that is, when  $B=0$, this problem reduces to a second-order elliptic equation that has been studied extensively in the recent past, see e.g. the monograph \cite{EGG10} and the references therein. As expected from the physics there is a critical value $\lambda_*>0$ such that no stationary solution exists if $\lambda>\lambda_*$ and at least one stationary solution exists for $\lambda\in (0,\lambda_*)$.
Let us emphasize that the comparison principle is available in this case and turns out to be a key tool for the analysis. Less attention has been dedicated to \eqref{hyper1s}-\eqref{hyper2s} with $B>0$, one reason might be the lack of a maximum principle in general for the clamped boundary conditions \eqref{hyper2s} (also called Dirichlet boundary conditions), see the monograph~\cite{GGS10} for a detailed discussion of positivity properties of higher-order operators. Recall that the situation is completely different if the clamped boundary conditions \eqref{hyper2} are replaced with pinned (or Navier) boundary conditions
\begin{equation}\label{pinned}
u=\Delta u=0\ \text{ on }\ \partial\Omega\ ,
\end{equation}
since in this case the maximum principle holds in arbitrary domains \cite{EGG10, GGS10}. This allows one in particular to show similar results for the  fourth-order problem \eqref{hyper1s}, \eqref{pinned} as outlined above for the second-order case corresponding to $B=0$.  
We refer to \cite{EGG10, LY07} for details.

\bigskip

Returning to the case of clamped boundary conditions, when $B>0$ existence of  solutions to \eqref{hyper1s}-\eqref{hyper2s} for small values of $\lambda$ has been established in \cite{LY07} for an arbitrary domain $\Omega$. This is the only result we are aware of for a general domain. In the particular case when $\Omega$ equals the unit ball $\unb$, Boggio \cite{Bo05} has uncovered the availability of the maximum principle for the operator $B\Delta^2$ with boundary conditions \eqref{hyper2s} by showing that the corresponding Green function is positive. This fact has been used in \cite{EGG10} to describe more precisely the set of solutions to \eqref{hyper1s}-\eqref{hyper2s} with $T=0$. It has actually been shown in \cite[Chapter 11]{EGG10} that there is a critical threshold value $\lambda_*>0$ such that no  solution exists for $\lambda>\lambda_*$ and a solution exists for $\lambda\in (0,\lambda_*]$.  

Very recently we were able to extend Boggio's maximum principle to the operator $B\Delta^2-T\Delta$ with $T>0$ and boundary conditions \eqref{hyper2s} in the class of radially symmetric functions in $\unb$ \cite{LWxx}. Taking advantage of this property not only allows us to extend the results of \cite{EGG10} to include $T>0$ (for $d=1,2$), but also to show that for each voltage value $\lambda\in (0,\lambda_*)$ there are at least two (radially symmetric) solutions to \eqref{hyper1s}-\eqref{hyper2s}, thereby answering a question raised in \cite[p.~268]{EGG10}. A summary of our results on radially symmetric solutions
to \eqref{hyper1s}-\eqref{hyper2s} is stated in the following theorem.

\begin{theorem}\label{TSSIntroduction}
Let $d=1,2$, $\Omega=\unb$, $B>0$, and $T\ge 0$. There exists $\lambda_*>0$ such that there is no radially symmetric solution to \eqref{hyper1s}-\eqref{hyper2s} for $\lambda>\lambda_*$. Moreover, there is a continuous  curve $(\Lambda(s), U(s))$, $s\in [0,\infty)$ in $\RR\times C^4(\cunb)$ such that $U(s)$ is for each $s\in [0,\infty)$  a radially symmetric solution to \eqref{hyper1s}-\eqref{hyper2s} with $\lambda=\Lambda(s)$. Moreover,  $(\Lambda(0), U(0))=(0, 0)$ and $(\Lambda(s), U(s))\rightarrow (0,\omega)$ as $s\rightarrow \infty$, where $\omega$ is an explicitly given radially symmetric function with $\omega(0)=-1$. Finally, there is $s_*>0$ such that $\Lambda$ is an increasing function from $[0,s_*]$ onto $[0,\lambda_*]$ and $\Lambda$ is decreasing in a right-neighborhood of $s_*$.
\end{theorem}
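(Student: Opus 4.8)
The plan is to recast the radially symmetric version of \eqref{hyper1s}-\eqref{hyper2s} as a one-parameter bifurcation problem and to run a global continuation argument, combining the extended Boggio maximum principle of \cite{LWxx} with a priori estimates that force the branch to behave as described.

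First I would reduce to an equation on the interval $(0,1)$ for the profile $v(r)=u(|x|)$ and invert the operator $B\Delta^2-T\Delta$ under clamped boundary conditions, using the positive Green function $G$ from \cite{LWxx}. This turns the problem into a fixed-point equation $v=-\lambda\,\mathcal{G}\!\left[(1+v)^{-2}\right]$ in a suitable subspace $X$ of radially symmetric $C^4$-functions, with the key feature that $\mathcal{G}$ is positivity preserving and compact. Because of the sign, solutions satisfy $-1<v\le 0$. Next, rather than parametrizing by $\lambda$ directly (which is not injective, as the statement makes clear), I would parametrize the solution set by $s:=-v(0)=\|v\|_\infty\in[0,1)$, i.e. by the maximal deflection at the center; the natural device is to look for pairs $(\lambda,v)$ with the constraint $v(0)=-s$ (or, more robustly, $s = \|v\|_\infty$) and to build the curve $(\Lambda(s),U(s))$ this way. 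For $s=0$ one gets the trivial solution $(0,0)$.

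The heart of the argument is a global implicit-function / degree-theoretic continuation. At $(\lambda,v)=(0,0)$ the linearization is $B\Delta^2-T\Delta$, which is invertible on $X$, so the implicit function theorem starts a local analytic branch emanating from the origin with $\Lambda'(0)>0$. I would then continue this branch maximally; the a priori bound $-1<v$ together with elliptic regularity (here the restriction $d=1,2$ enters, to control $(1+v)^{-2}$ in the relevant $L^p$ and to get $C^4$ bounds away from the singularity) and the compactness of $\mathcal{G}$ prevents the branch from terminating except by $s\to 1$, i.e. $v(0)\to-1$. Along the branch, multiplying the equation by a suitable positive test function (e.g. the first eigenfunction of $B\Delta^2-T\Delta$ on the ball, which is positive by Boggio/\cite{LWxx}) and using $1/(1+v)^2 \ge 1$ yields an upper bound $\Lambda(s)\le \lambda_*$ for some finite $\lambda_*$, while $\lambda_*:=\sup_s \Lambda(s)$ is then exactly the threshold: for $\lambda>\lambda_*$ no radially symmetric solution can exist. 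To identify the limit profile $\omega$ as $s\to1$, I would pass to the limit in the fixed-point equation: since $0\le \lambda/(1+v)^2$, the measures $\lambda_n/(1+v_n)^2\,\rd x$ either stay bounded (excluded, as it would give a genuine solution with $v(0)=-1$, contradicting $-1<v$) or concentrate, and a scaling/ODE analysis of the radial fourth-order equation near $r=0$ pins down $\Lambda(s)\to0$ and $U(s)\to\omega$, where $\omega$ is the explicit solution of $B\Delta^2\omega-T\Delta\omega=0$ away from the origin with the appropriate singular behavior and $\omega(0)=-1$ — this explicit function is presumably computed in \cite{LWxx}. Finally, the monotonicity statements: $\Lambda$ increasing on $[0,s_*]$ follows because the linearized operator stays invertible (no turning point) until the first $s$ where $\Lambda'(s)=0$; that such an $s_*<\infty$ exists and $\Lambda$ decreases just after it follows by combining $\Lambda(s_*)=\lambda_*=\max\Lambda$ with the fact that $\Lambda$ cannot be constant (analyticity of the branch) — hence it must decrease to the right of the maximizer, giving the two-solution multiplicity for $\lambda\in(0,\lambda_*)$.

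The main obstacle I anticipate is the continuation step: showing the branch does not loop back to $s=0$ or stop prematurely, and in particular establishing the a priori $C^4(\cunb)$ estimates uniformly in $s$ on every set $\{s\le 1-\delta\}$ while simultaneously controlling the \emph{degeneration} as $s\to1$. This requires careful radial elliptic estimates for $B\Delta^2-T\Delta$ with clamped conditions — where no pointwise maximum principle is available in the usual form and one must lean entirely on the Green-function positivity from \cite{LWxx} — together with a delicate blow-up analysis near the origin to extract the explicit limit $\omega$; the dimensional restriction $d=1,2$ is what makes the nonlinearity subcritical enough for this to go through.
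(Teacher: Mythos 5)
Your sketch assembles several of the right ingredients (positivity of the Green function from \cite{LWxx}, the eigenfunction test for finiteness of the threshold, a limiting ODE analysis near the origin), but the two structural cores of the theorem are not actually proved. First, the continuation past the fold: an implicit-function continuation in $\lambda$ stops precisely where the linearization $F_u(\lambda,u)=1+\lambda A^{-1}g'(u)$ degenerates, i.e.\ at $(\lambda_*,u_{\lambda_*})$, well before $u(0)\to-1$. Re-parametrizing by $s=-v(0)$ only rescues this if you already know that the degenerate point is a \emph{simple} fold whose kernel is spanned by a positive eigenfunction $\phi_*$ (so that $\phi_*(0)\neq0$) and that the transversality condition $F_\lambda(\lambda_*,u_{\lambda_*})\notin\mathrm{rg}\,F_u(\lambda_*,u_{\lambda_*})$ holds; these are exactly the contents of Lemma~\ref{leb3c} and Theorem~\ref{T21}, whose proofs rest on the Moreau decomposition in $H^2_{D,r}(\unb)$ and the maximum principle of \cite{LWxx} (and on \cite{CR73}), none of which appears in your argument. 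Degree theory would give only a connected continuum, not a curve, and certainly not ``$\Lambda$ increasing on $[0,s_*]$, decreasing just after'' — the latter requires in addition the sign $(\Lambda\circ\zeta)''(0)<0$ (Corollary~\ref{C22}) and, globally, the analytic bifurcation theorem of \cite{BDT00} together with the uniqueness of stable solutions (Lemma~\ref{leb4}, Proposition~\ref{thb5}) to exclude looping and secondary bifurcation, an issue you yourself flag as unresolved. Second, you define $\lambda_*:=\sup_s\Lambda(s)$ along your branch and simply assert non-existence for $\lambda>\lambda_*$; the eigenfunction test only yields non-existence above $m_1$, and nothing in your proposal shows that the branch reaches the true threshold $\sup\{\lambda:\mathcal{S}_r^\lambda\neq\emptyset\}$. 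Closing this gap is precisely what the maximal-solution/stability machinery does (Propositions~\ref{prb2}, \ref{thb5}, \ref{prb7} and Corollary~\ref{C20}): every $\lambda<\lambda_*$ admits a stable maximal solution lying on the branch, and a solution exists at $\lambda_*$ itself.

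The endpoint identification is also substantially harder than your dichotomy suggests. To show $\Lambda(s)\to0$ and $U(s)\to\omega$ one needs uniform bounds on \emph{all} radially symmetric solutions with $\lambda\le\lambda_*$ (Lemma~\ref{lec1}, which for $d=2$ requires a Brezis--Merle-type estimate for the clamped biharmonic problem, Lemma~\ref{le.app1}), the quantitative inequality $I_d(1+w(0))\le C(1+\|w\|_{H^2}^2)\|g(w)\|_1$ of Lemma~\ref{lec2} converting $\min U(s)\to-1$ into $\Lambda(s)\to0$, and then the delicate radial argument of Lemma~\ref{lec3} excluding a ``dead core'' $\{\omega\equiv-1\}=\bar{\mathbb{B}}_a$ with $a>0$, without which $\omega$ need not satisfy the homogeneous equation on all of $\unb\setminus\{0\}$ nor touch $-1$ only at the origin. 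Your ``bounded versus concentrating'' alternative and unspecified scaling analysis do not substitute for these steps (note also that $\omega$ is not computed in \cite{LWxx}; it must be derived here). As it stands, the proposal is a programme whose critical steps — simplicity and transversality at the fold, identification of the branch maximum with the non-existence threshold, and the exclusion of the dead core — are missing.
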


\begin{remark}\label{ghoussoub}
Theorem~\ref{TSSIntroduction} guarantees that for each $\lambda\in (0,\lambda_*)$, there are at least two (radially symmetric) solutions  to \eqref{hyper1s}-\eqref{hyper2s}. More precisely, for each $\lambda\in (0,\lambda_*)$ there are at least two values $0<s_1<s_*<s_2$ with $\Lambda(s_j)=\lambda$ for $j=1,2$ and $U(s_2)\le U(s_1)$ in $\unb$ with $U(s_2)\not= U(s_1)$. 
\end{remark}

Let us mention here that the construction of the curve $(\Lambda(s), U(s))$, $s\in [0,s_*]$ follows the lines of \cite[Chapter 11]{EGG10}, where a similar result is proved when $T=0$. There are, however, some technical difficulties to be overcome. Nevertheless, we emphasize that the main contributions of Theorem~\ref{TSSIntroduction} are the extension of the curve past $(\Lambda(s_*), U(s_*))$ and the identification of its end point $\omega$ as $s\rightarrow \infty$. Interestingly, the end point $\omega$ is given as a solution of a boundary value problem in $\unb\setminus\{0\}$ which can be computed explicitly (see Theorem~\ref{C217} below); a plot of $\omega$ is shown in Figure~\ref{fig1}. The qualitative behavior of $\omega$ is the same for $d=1$ and $d=2$. 

For the case of pinned boundary conditions \eqref{pinned} it has been shown in \cite[Theorem 1.2]{GW09} with the help of the Mountain Pass Principle that there are at least two solutions for $\lambda\in (0,\lambda_*)$. The limit as $\lambda\rightarrow 0$ of the minimum of the solutions constructed with the  Mountain Pass Principle is proved to be $-1$. However, the precise profile as $\lambda\rightarrow 0$ is not identified therein.

\medskip

\begin{figure}
\centering\includegraphics[width=10cm]{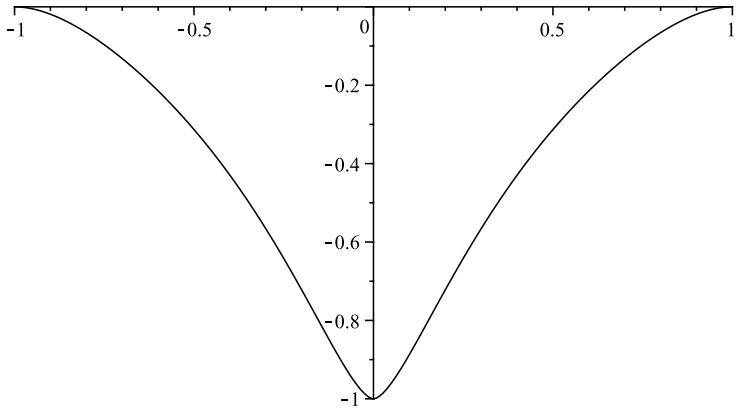}
\caption{\small Plot of the function $\omega$ for $d=2$ and $(B,T)=(1,50)$.}\label{fig1}
\end{figure}

The proof of Theorem~\ref{TSSIntroduction} is performed in Section~\ref{sec2}. Therein we give a more detailed characterization of the set of radially symmetric stationary solutions. Actually, the implicit function theorem provides a branch $\mathcal{A}_0$ of radially symmetric solutions $(\lambda,u)$ to \eqref{hyper1s}-\eqref{hyper2s} emanating from $(0,0)$. We then use the  bifurcation theory of \cite{BDT00} for real analytic functions to extend $\mathcal{A}_0$ to a global curve $\mathcal{A}$ (see Theorem~\ref{T19} below).  Next we show that $\mathcal{A}_0$ coincides with the branch of stable radially symmetric stationary solutions (see Corollary~\ref{C20}). To achieve this result, the maximum principle obtained in \cite{LWxx} is essential as was Boggio's maximum principle in \cite{EGG10} for the case $T=0$. The outcome of this analysis is that there is a threshold value $\lambda_*>0$ such that there is no radially symmetric stationary solution for $\lambda>\lambda_*$, while for any $\lambda\in (0,\lambda_*)$ there is a unique stationary solution $u_\lambda$ such that $(\lambda,u_\lambda)\in \mathcal{A}_0$. We then show that for $\lambda=\lambda_*$ there is also a radially symmetric stationary solution $u_{\lambda_*}$, which guarantees on the one hand that $\mathcal{A}_0\not=\mathcal{A}$ and on the other hand that we may apply the result of \cite{CR73} to extend the branch $\mathcal{A}_0$ to the ``right'' of $(\lambda_*, u_{\lambda_*})$ (see Theorem~\ref{T21}). The final step is to show that $\mathcal{A}$ connects  $(\lambda,u)=(0,0)$ to the end point $(0,\omega)$ and to identify the latter (see Theorem~\ref{C217}). As a consequence, the continuous curve $\mathcal{A}$ passes through  $(\lambda,u)=(\lambda_*,u_{\lambda_*})$, which implies Remark~\ref{ghoussoub}. 

\bigskip

We shall also investigate local and global well-posedness of the dynamic problem \eqref{hyper1}-\eqref{hyper3}. It is worth pointing out that the maximum principle, which is at the heart of the proof of Theorem~\ref{TSSIntroduction} and the main reason to restrict the analysis to $\unb$, is no longer valid for the time-dependent problem ~\eqref{hyper1}-\eqref{hyper3}. In order to construct solutions to the evolution problem, we therefore have to employ an alternative method which does not rely on maximum principles. Our approach is based on semigroup theory and is not specific to $\unb$.
We thus consider an arbitrary domain $\Omega\subset\RR^d$, $d=1,2$, in the following and begin with the hyperbolic problem which has not received much attention so far.

\begin{theorem}\label{ThyperIntroduction}
Let $\Omega\subset\mathbb{R}^d$ be an arbitrary bounded smooth domain for $d=1,2$ and $B>0$, $T\ge 0$. Let $\lambda>0$ and $\kappa\in (0,1)$. Let $(u^0,u^1)\in H^4(\Omega)\times H^2(\Omega)$ be such that $u^0\ge -1+\kappa$ in~$\Omega$ and such that $u^0$ and $u^1$ both satisfy the boundary  conditions \eqref{hyper2}. Then the following hold:

\begin{itemize}
\item[(i)] There are $\tau_m>0$ and a unique maximal solution $u$ to \eqref{hyper1}-\eqref{hyper3} with regularity 
$$
u\in C([0,\tau_m),H^2(\Omega))\cap C^1([0,\tau_m),L_2(\Omega))\ ,\quad \partial_t^k u\in L_1(0,\tau; H^{4-2k}(\Omega))
$$
for $k=0,1,2$ and $\tau\in (0,\tau_m)$.

\item[(ii)] If $\tau_m<\infty$, then  $$\liminf_{t\rightarrow\tau_m}\big(\min_{\bar\Omega} u(t)\big)=-1\ .$$

\item[(iii)] There are $\lambda_1(\kappa)>0$ and $r(\kappa)>0$ such that $\tau_m=\infty$ provided that $\lambda\le \lambda_1(\kappa)$ and $\|(u^0,u^1)\|_{H^2\times L_2}\le r(\kappa)$. In this case, $u\in L_\infty(0,\infty;H^2(\Omega))$ and 
$$
\inf_{(0,\infty)\times \Omega} u>-1\ .
$$ 

\item[(iv)] If $\Omega=\unb$ and both $u^0$ and $u^1$ are radially symmetric, then so is $u(t)$ for each $t\in [0,\tau_m)$.
\end{itemize}
\end{theorem}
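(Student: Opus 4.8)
The plan is to recast \eqref{hyper1}--\eqref{hyper3} as a semilinear first-order problem on the energy space and to combine the resulting local well-posedness with a Lyapunov functional and a continuation argument. Set $A:=B\Delta^2-T\Delta$ with domain $D(A):=H^4(\Omega)\cap H^2_{\mathcal B}(\Omega)$, where $H^2_{\mathcal B}(\Omega)$ is the subspace of $H^2(\Omega)$ of functions satisfying \eqref{hyper2}. On the smooth domain $\Omega$ elliptic regularity makes $A$ self-adjoint and coercive on $L_2(\Omega)$, with $D(A^{1/2})=H^2_{\mathcal B}(\Omega)$ and $\langle Av,v\rangle_{L_2}=B\|\Delta v\|_{L_2}^2+T\|\nabla v\|_{L_2}^2\ge c\,\|v\|_{H^2}^2$. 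Writing $U=(u,\partial_t u)$, $\mathcal F(u,v)=\bigl(0,-\gamma^{-2}\lambda(1+u)^{-2}\bigr)$ and
\[
\mathbb A=\begin{pmatrix}0&I\\-\gamma^{-2}A&-\gamma^{-2}I\end{pmatrix},\qquad D(\mathbb A)=D(A)\times H^2_{\mathcal B}(\Omega),
\]
problem \eqref{hyper1}--\eqref{hyper3} becomes $U'=\mathbb A U+\mathcal F(U)$, $U(0)=(u^0,u^1)$, on $X:=H^2_{\mathcal B}(\Omega)\times L_2(\Omega)$. I would first verify, via the Lumer--Phillips theorem with the natural energy norm on $X$, that $\mathbb A$ generates a $C_0$-semigroup $(e^{t\mathbb A})_{t\ge0}$, and then -- exploiting that $A$ is self-adjoint and coercive -- that this semigroup decays exponentially, $\|e^{t\mathbb A}\|_{\mathcal L(X)}\le Me^{-\omega t}$, the decay being read off the perturbed energy $\|U\|_X^2+\e\langle u,\partial_t u\rangle_{L_2}$ with $\e>0$ small. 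Since $d\in\{1,2\}$, $H^2(\Omega)\hookrightarrow C(\bar\Omega)$ and $H^2(\Omega)$ is a Banach algebra, so on the open set $\mathcal O:=\{v\in H^2_{\mathcal B}(\Omega):1+v>0\ \text{on}\ \bar\Omega\}$ the map $v\mapsto(1+v)^{-2}$ is $C^1$ into $L_2(\Omega)$ (indeed real analytic into $H^2(\Omega)$), hence $\mathcal F\in C^1\bigl(\mathcal O\times L_2(\Omega),X\bigr)$. As the hypotheses give $(u^0,u^1)\in D(\mathbb A)\cap\bigl(\mathcal O\times L_2(\Omega)\bigr)$, the classical theory of semilinear equations driven by a $C_0$-semigroup with a $C^1$ nonlinearity defined on an open set produces a unique maximal solution $U\in C([0,\tau_m),D(\mathbb A))\cap C^1([0,\tau_m),X)$ with $1+u(t)>0$ on $\bar\Omega$ for $t<\tau_m$; reading off the components gives in particular $u\in C([0,\tau_m),H^2(\Omega))\cap C^1([0,\tau_m),L_2(\Omega))$ and $\partial_t^k u\in L_1(0,\tau;H^{4-2k}(\Omega))$ for $k=0,1,2$ and $\tau<\tau_m$, which is (i).

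For (ii) I would test \eqref{hyper1} with $\partial_t u$; integrating by parts using \eqref{hyper2} for both $u$ and $\partial_t u$, and using $-\lambda(1+u)^{-2}\partial_t u=\lambda\,\partial_t\!\left[(1+u)^{-1}\right]$, one finds that
\[
\mathcal E(t):=\frac{\gamma^2}{2}\|\partial_t u(t)\|_{L_2}^2+\frac{B}{2}\|\Delta u(t)\|_{L_2}^2+\frac{T}{2}\|\nabla u(t)\|_{L_2}^2-\lambda\int_\Omega\frac{\rd x}{1+u(t)}
\]
satisfies $\tfrac{\rd}{\rd t}\mathcal E(t)=-\|\partial_t u(t)\|_{L_2}^2\le0$ -- the $L_1$-in-time regularity from (i) being exactly what is needed to make this computation rigorous. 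If $\tau_m<\infty$ while $\liminf_{t\to\tau_m}\min_{\bar\Omega}u(t)>-1$, then $1+u(t)\ge\delta$ on $\bar\Omega$ for $t$ close to $\tau_m$ and some $\delta>0$, so $\mathcal E(t)\le\mathcal E(0)$ forces $\|\partial_t u(t)\|_{L_2}^2+\|u(t)\|_{H^2}^2\le C(\delta)$ near $\tau_m$; since simultaneously $1+u(t)\ge\delta$, the local existence time furnished by (i) when started at such a time $t$ is bounded below by a positive constant depending only on $C(\delta)$ and $\delta$, which contradicts the maximality of $\tau_m$.

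For (iii), fix $\kappa\in(0,1)$. As long as $1+u\ge\kappa/2$ on $[0,t]\times\bar\Omega$ one has $\|\mathcal F(U(s))\|_X\le\gamma^{-2}\lambda\,(\kappa/2)^{-2}|\Omega|^{1/2}$ for $s\le t$, so Duhamel's formula and the exponential decay of $e^{t\mathbb A}$ yield $\|U(t)\|_X\le M\|(u^0,u^1)\|_X+\gamma^{-2}M\omega^{-1}(\kappa/2)^{-2}|\Omega|^{1/2}\lambda=:M^*$. Choosing $r(\kappa)>0$ and $\lambda_1(\kappa)>0$ so small that $M^*$ times the embedding constant of $H^2(\Omega)\hookrightarrow C(\bar\Omega)$ stays strictly below $1-\kappa/2$, a continuity argument shows that $1+u$ never drops down to $\kappa/2$ on $[0,\tau_m)$; then (ii) forces $\tau_m=\infty$, the bound $\|u(t)\|_{H^2}\le M^*$ holds for all $t$ (so $u\in L_\infty(0,\infty;H^2(\Omega))$), and $\inf_{(0,\infty)\times\Omega}(1+u)>0$. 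Finally (iv) is an equivariance argument: if $\Omega=\unb$ and $u^0,u^1$ are radially symmetric, then for every rotation $R\in O(d)$ the function $u^R(t,x):=u(t,Rx)$ solves \eqref{hyper1}--\eqref{hyper3} in the same class and with the same data, since $\Delta^2$, $\Delta$, the pointwise nonlinearity, the ball $\unb$ and the boundary conditions \eqref{hyper2} are all invariant under $R$; uniqueness in (i) then gives $u^R=u$, i.e. $u(t)$ is radially symmetric for each $t\in[0,\tau_m)$.

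The first paragraph is essentially routine once the functional-analytic set-up is fixed, the only care needed being to keep the solution inside the open set $\mathcal O$ on which the singular nonlinearity is defined. I expect the real work to lie in the two ingredients underpinning (ii) and (iii): establishing the exponential decay of the merely weakly damped semigroup $e^{t\mathbb A}$ (only the lower-order term $\partial_t u$ provides damping), which is what allows the global estimate in (iii) to close; and, in both (ii) and (iii), converting the quantitative hypothesis (the $\liminf$ condition, respectively smallness of the data and of $\lambda$) into a uniform pointwise lower bound on $1+u$, so that the singular term $\int_\Omega(1+u)^{-1}\,\rd x$ in $\mathcal E$ -- and the nonlinearity itself -- stay under control.
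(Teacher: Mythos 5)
Your proposal is correct and shares the paper's overall architecture: the same first-order reformulation on the energy space $H^2_D(\Omega)\times L_2(\Omega)$, a semigroup-based local theory exploiting that the nonlinearity is well behaved on the open set where $1+u$ is bounded below (thanks to $H^2(\Omega)\hookrightarrow C(\bar\Omega)$ for $d=1,2$), the energy functional $\mathcal E$ to rule out norm blow-up and prove the touchdown alternative (ii), and the equivariance-plus-uniqueness argument for (iv). The two points where you genuinely diverge are worth noting. First, for (i) you invoke the $C^1$ (indeed analytic) character of the nonlinearity to obtain a classical solution $U\in C([0,\tau_m),D(\mathbb A))\cap C^1([0,\tau_m),X)$; the paper only uses uniform Lipschitz continuity of $f$ on $S(\kappa)$, gets a mild solution, and upgrades it for data in $D(\mathbb A)$ to a strong solution that is differentiable merely a.e.\ in time, whence the weaker $L_1$-in-time statement in (i). Your route gives a slightly stronger conclusion at no real extra cost and is perfectly adequate. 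Second, and more substantively, for (iii) you propose to prove exponential decay of the weakly damped linear semigroup $e^{t\mathbb A}$ (via a perturbed energy) and then close a Duhamel estimate; this is correct, since $A$ is coercive on $H^2_D(\Omega)$ so the standard perturbation $\|U\|_X^2+\e\langle u,\partial_t u\rangle_{L_2}$ works, but it is not needed: the paper closes the argument directly from the energy identity \eqref{ppp}, which yields $B\|\Delta u(t)\|_2^2+T\|\nabla u(t)\|_2^2\le B\|\Delta u^0\|_2^2+T\|\nabla u^0\|_2^2+\|u^1\|_2^2+2\lambda|\Omega|/\kappa$ as long as $u\ge-1+\kappa$, and then the $H^2\hookrightarrow L_\infty$ embedding and a continuity argument keep $u$ away from $-1$ for small $\lambda$ and small data. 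So the ingredient you single out as ``the real work'' (the decay of the merely weakly damped semigroup) can be bypassed entirely in the hyperbolic setting; exponential decay of the linear semigroup is only exploited in the paper for the parabolic problem. Both mechanisms are valid, yours being marginally more robust (it quantifies decay of the linear part), the paper's being more elementary.
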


 Similar results have been established in \cite{KLNT11} for $d=1$ and $B=0$ (without the damping term $\partial_t u$) and in \cite{Gu10} for the pinned boundary conditions \eqref{pinned} and $d\in\{1,2,3\}$. Let us point out that the semigroup approach allows us to obtain strong solutions instead of weak solutions as in \cite{Gu10, KLNT11}, see Section~\ref{sec31} for the proof of Theorem~\ref{ThyperIntroduction}.

\bigskip

In the damping dominated limit $\gamma \ll 1$ when viscous forces dominate over inertial forces, \eqref{hyper1}-\eqref{hyper3} reduces to the parabolic problem
\begin{align}
\partial_t u +B\Delta^2 u- T\Delta u&=-\frac{\lambda}{(1+u)^2}\ ,\quad &t>0\ ,\quad & x\in \Omega\ ,\label{hyper1p}\\
u=\partial_\nu u&=0\ , & t>0\ ,\quad & x\in \partial\Omega\ ,\label{hyper2p}\\
u(0,\cdot)&=u^0\ , & & x\in \Omega\ .\label{hyper3p}
\end{align}
To the best of our knowledge, this problem has not been investigated so far. For this reason, we include a result on its well-posedness though local existence of solutions is a rather classical argument. To obtain global solutions for small values of $\lambda$, we consider only regular initial values in the next theorem for the sake of simplicity.

\begin{theorem}\label{TparaIntroduction}
Let $\Omega\subset\mathbb{R}^d$ be an arbitrary bounded smooth domain for $d=1,2$ and $B>0$, $T\ge 0$. Let $\lambda>0$ and $\kappa\in (0,1)$. Let $u^0\in H^{2}(\Omega)$ be such that $u^0\ge -1+\kappa$ in $\Omega$ and $u^0$ satisfies the boundary conditions \eqref{hyper2p}. Then the following hold:
\begin{itemize}
\item[(i)] There are $\tau_m>0$ and a unique  maximal solution $u$ to \eqref{hyper1p}-\eqref{hyper3p} with regularity
$$
u\in C([0,\tau_m),H^{2}(\Omega))\cap C((0,\tau_m),H^4(\Omega))\cap C^1((0,\tau_m),L_2(\Omega))\ .
$$

\item[(ii)] If $\tau_m<\infty$, then $$\liminf_{t\rightarrow\tau_m}\big(\min_{\bar\Omega} u(t)\big)=-1\ .$$

\item[(iii)] There is $\lambda_1(\kappa)>0$ such that $\tau_m=\infty$ provided that $\lambda\le \lambda_1(\kappa)$ and $\|u^0\|_{H^{2}}\le \kappa^{-1}$. In this case, $u\in L_\infty(0,\infty;H^{2}(\Omega))$ and 
$$
\inf_{(0,\infty)\times \Omega} u>-1\ .
$$ 

\item[(iv)] If $\Omega=\unb$ and $u^0$ is radially symmetric, then so is $u(t)$ for each $t\in [0,\tau_m)$.
\end{itemize}
\end{theorem}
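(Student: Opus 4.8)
The plan is to recast \eqref{hyper1p}-\eqref{hyper3p} as an abstract semilinear parabolic Cauchy problem and to apply the classical theory for such equations; the only non‑routine points will be the sharp characterization~(ii) of the maximal time and the global existence statement~(iii). First I would realise $A:=B\Delta^2-T\Delta$ as an unbounded operator on $X:=L_2(\Omega)$ with domain $D(A):=H^4_{\mathcal B}(\Omega):=\{v\in H^4(\Omega):v=\partial_\nu v=0\ \text{on}\ \partial\Omega\}$. Since $B>0$ and $T\ge0$, the form $\langle Av,v\rangle_X=B\|\Delta v\|_{L_2}^2+T\|\nabla v\|_{L_2}^2$ is coercive on $H^2_{\mathcal B}(\Omega):=\{v\in H^2(\Omega):v=\partial_\nu v=0\ \text{on}\ \partial\Omega\}$ by Poincaré's inequality, so $A$ is self-adjoint and positive definite with $\mathrm{spec}(A)\subset[\omega_0,\infty)$ for some $\omega_0>0$, $-A$ generates an analytic semigroup $(e^{-tA})_{t\ge0}$ on $X$, and, up to equivalent norms, $D(A^{1/2})=H^2_{\mathcal B}(\Omega)$ and $D(A)=H^4_{\mathcal B}(\Omega)$ (the latter by $H^4$-elliptic regularity for the clamped bilaplacian on the smooth domain $\Omega$). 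Since $d\in\{1,2\}$, the embedding $H^2(\Omega)\hookrightarrow C(\bar\Omega)$ is continuous, hence
$$
\mathcal O:=\big\{v\in H^2_{\mathcal B}(\Omega):v(x)>-1\ \text{for all}\ x\in\bar\Omega\big\}
$$
is open in $H^2_{\mathcal B}(\Omega)$, it contains $u^0$ (as $u^0\ge-1+\kappa$), and the substitution operator $g:\mathcal O\to X$, $g(v):=-\lambda(1+v)^{-2}$, is real analytic and is Lipschitz continuous and bounded on subsets of $\mathcal O$ staying away from $\{v=-1\}$. Then \eqref{hyper1p}-\eqref{hyper3p} becomes the Cauchy problem $\dot u+Au=g(u)$, $u(0)=u^0\in\mathcal O$.

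With this framework, part~(i) follows from the standard local theory for semilinear parabolic equations: there is a unique maximal mild solution $u\in C([0,\tau_m),H^2_{\mathcal B}(\Omega))$, and the parabolic smoothing of $(e^{-tA})_{t\ge0}$ together with the local Lipschitz continuity of $g$ bootstraps it to a strong solution satisfying $u\in C((0,\tau_m),H^4_{\mathcal B}(\Omega))\cap C^1((0,\tau_m),L_2(\Omega))$ and solving the equation pointwise on $(0,\tau_m)$; maximality comes with the usual continuation alternative, namely that either $\tau_m=\infty$ or, for each $t_0\in(0,\tau_m)$, the orbit $\{u(t):t\in[t_0,\tau_m)\}$ leaves every compact subset of $\mathcal O$ as $t\to\tau_m$. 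For the symmetry statement~(iv) I would observe that $A$, $(e^{-tA})_{t\ge0}$ and $g$ all commute with the $O(d)$-action $v\mapsto v(R\,\cdot)$ on $\unb$; therefore, if $u^0$ is radially symmetric, then $u(R\,\cdot\,,\cdot)$ solves the same Cauchy problem and, by uniqueness, coincides with $u$, so that $u(t)$ is radially symmetric for every $t\in[0,\tau_m)$.

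The next step is to sharpen the continuation alternative into~(ii) by means of the Liapunov functional
$$
\mathcal E(v):=\frac B2\,\|\Delta v\|_{L_2}^2+\frac T2\,\|\nabla v\|_{L_2}^2-\lambda\int_\Omega\frac{\rd x}{1+v}\ ,\qquad v\in\mathcal O\ .
$$
Using the regularity from~(i), the identity $\frac B2\|\Delta v\|_{L_2}^2+\frac T2\|\nabla v\|_{L_2}^2=\frac12\langle v,Av\rangle_X$ for $v\in D(A)$, and the self-adjointness of $A$, one verifies that $t\mapsto\mathcal E(u(t))$ is differentiable on $(0,\tau_m)$ with $\frac{\rd}{\rd t}\mathcal E(u(t))=\langle\partial_t u(t),Au(t)-g(u(t))\rangle_X=-\|\partial_t u(t)\|_{L_2}^2\le0$. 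Assume now, for contradiction, that $\tau_m<\infty$ but $\liminf_{t\to\tau_m}\min_{\bar\Omega}u(t)>-1$, so that $u(t)\ge-1+\delta$ on $[t_0,\tau_m)$ for some $\delta\in(0,1)$ and $t_0\in(0,\tau_m)$. Then $-\lambda\int_\Omega\rd x/(1+u(t))\ge-\lambda|\Omega|/\delta$, and monotonicity of $\mathcal E$ yields a uniform bound on $\|\Delta u(t)\|_{L_2}$, hence on $\|u(t)\|_{H^2}$, over $[t_0,\tau_m)$. Consequently $g(u(t))$ is bounded in $L_2(\Omega)$, and a standard estimate based on the variation-of-constants formula and the analytic-semigroup bounds upgrades this to a uniform bound on $\|u(t)\|_{D(A^\theta)}$ for some $\theta\in(1/2,1)$ with $D(A^\theta)\hookrightarrow\hookrightarrow H^2_{\mathcal B}(\Omega)$; combined with $u(t)\ge-1+\delta$, this confines the orbit to a compact subset of $\mathcal O$, contradicting the continuation alternative. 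Hence $\tau_m<\infty$ forces $\liminf_{t\to\tau_m}\min_{\bar\Omega}u(t)=-1$.

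The main obstacle is the global existence in~(iii). Since the maximum principle which underlies the stationary analysis of Theorem~\ref{TSSIntroduction} is not available here, the distance of $u$ from the singular set $\{v=-1\}$ cannot be controlled by comparison arguments and must instead be estimated quantitatively. I would introduce the maximal subinterval $[0,T^\ast)\subseteq[0,\tau_m)$ on which $u(t)\ge-1+\kappa/2$, say, so that $\|(1+u(t))^{-2}\|_{L_2}\le4\kappa^{-2}|\Omega|^{1/2}$ there; differentiating $\|u(t)\|_{D(A^{1/2})}^2=\langle u(t),Au(t)\rangle_X$ along the flow, inserting the equation and using $\|Au\|_{L_2}^2\ge\omega_0\|u\|_{D(A^{1/2})}^2$, one obtains
$$
\frac{\rd}{\rd t}\,\|u(t)\|_{D(A^{1/2})}^2+\omega_0\,\|u(t)\|_{D(A^{1/2})}^2\ \le\ C\,\lambda^2\kappa^{-4}\ ,\qquad t\in(0,T^\ast)\ ,
$$
so Gronwall's lemma bounds $\|u(t)\|_{H^2}$, and hence $\|u(t)\|_{C(\bar\Omega)}$, on $[0,T^\ast)$ in terms of the data and of $\lambda\kappa^{-2}$; in parallel, the variation-of-constants formula gives the splitting $u(t)=e^{-tA}u^0+w(t)$ with $\|w(t)\|_{C(\bar\Omega)}\le C\lambda\kappa^{-2}$ on $[0,T^\ast)$, while $t\mapsto e^{-tA}u^0$ is continuous into $C(\bar\Omega)$, equals $u^0\ge-1+\kappa$ at $t=0$, and decays to $0$. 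Choosing $\lambda\le\lambda_1(\kappa)$ small enough relative to the permitted size of $u^0$, one then shows that $u$ cannot reach the level $-1+\kappa/2$, so that $T^\ast=\tau_m$ and, by~(ii), $\tau_m=\infty$; the uniform bound furnishes in addition $u\in L_\infty(0,\infty;H^2(\Omega))$ and $\inf_{(0,\infty)\times\Omega}u>-1$. \textbf{The delicate point}, and the crux of the whole theorem, is exactly this last closing step: since $(e^{-tA})_{t\ge0}$ does not preserve the sign and the forcing $-\lambda(1+u)^{-2}$ is one-signed and pushes $u$ downward, one must show by purely quantitative energy-type estimates that their combined effect cannot drive $u$ down to $-1$ during the transient phase before the exponential stabilisation takes over.
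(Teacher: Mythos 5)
Parts (i), (ii) and (iv) of your proposal are fine and are in the same semigroup spirit as the paper, which recasts \eqref{hyper1p}--\eqref{hyper3p} via the variation-of-constants formula, runs a classical fixed point argument, obtains the extra regularity for $t>0$ by parabolic smoothing, and gets (iv) from uniqueness and rotational invariance. Your dissipation of the Liapunov functional in (ii) is a harmless variant: the paper simply uses that $g(u)$ stays bounded in $L_2(\Omega)$ as long as $u$ stays away from $-1$, which already rules out norm blow-up through the Duhamel formula, but your energy route reaches the same conclusion correctly.

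The genuine gap is the closing step of (iii), which you yourself single out as ``the delicate point'' but never actually carry out, and which fails as sketched. From $u(t)=e^{-tA}u^0+w(t)$ with $\|w(t)\|_{C(\bar\Omega)}\le C\lambda\kappa^{-2}$ on $[0,T^\ast)$ you would need a pointwise lower bound of the form $e^{-tA}u^0\ge -1+\kappa/2+C\lambda\kappa^{-2}$ on the whole transient. But the clamped fourth-order semigroup is not order-preserving, so $u^0\ge -1+\kappa$ gives no lower bound on $e^{-tA}u^0$ for $t>0$, and under the stated hypothesis $\|u^0\|_{H^2}\le\kappa^{-1}$ (a large bound, not a smallness condition) the only available control is $\|e^{-tA}u^0\|_{\infty}\le C\kappa^{-1}e^{-\omega_0 t}$; this permits the linear orbit --- and likewise the bound on $\|u(t)\|_\infty$ produced by your Gronwall estimate, which is also of size $\kappa^{-1}$ --- to lie far below the level $-1+\kappa/2$ throughout the time interval $t\lesssim \omega_0^{-1}\ln(\kappa^{-1})$, uniformly in $\lambda$. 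The observation that $t\mapsto e^{-tA}u^0$ is continuous, equals $u^0\ge -1+\kappa$ at $t=0$ and decays to $0$ at infinity says nothing about intermediate times, so it does not show $T^\ast=\tau_m$. Your estimates would close immediately if the hypothesis were replaced by the smallness condition used in the hyperbolic case (Corollary~\ref{C32}), i.e. $c_4\big(B\|\Delta u^0\|_2^2+T\|\nabla u^0\|_2^2\big)\le(1-2\kappa)^2$, since then the Gronwall bound keeps $\|u(t)\|_\infty\le 1-\kappa$ and hence $u(t)\ge -1+\kappa$; but that is not the statement to be proved. For comparison, the paper proves (iii) by the fixed point argument on the variation-of-constants formula itself, where the exponential decay $\|e^{-tA}\|_{\mathcal{L}(L_2(\Omega))}\le Me^{-\alpha t}$ and the smallness of $\lambda$ relative to $\kappa$ enter the invariance of the solution set (the constraint keeping $u$ away from $-1$ being part of that set), with the details delegated to \cite[Theorem~1.2~(i)]{ELW1}; this invariance argument is exactly the ingredient your proposal replaces by the unjustified comparison with the linear orbit of $u^0$, so the crux of (iii) remains unproven in your write-up.
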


The proof of Theorem~\ref{TparaIntroduction} follows the same lines as that of Theorem~\ref{ThyperIntroduction} and is to be found in Section~\ref{sec32a}. 

\bigskip

On physical grounds it is expected that solutions to the dynamic problems \eqref{hyper1}-\eqref{hyper3} or \eqref{hyper1p}-\eqref{hyper3p} touch down (i.e. $u=-1$) in finite time and thus do not exist globally if the voltage value $\lambda$ exceeds the critical pull-in voltage above which no stationary solution exists. This is true for the second-order parabolic case, see \cite{EGG10} for instance, but seems to be an open problem both for the hyperbolic equation \eqref{hyper1}-\eqref{hyper3} as well as for the parabolic equation \eqref{hyper1p}-\eqref{hyper3p}. Actually, even the weaker result of the occurrence of touchdown in finite time  for large values of $\lambda$ has not yet been proven apparently, though observed numerically in~\cite{LL12} for \eqref{hyper1p}-\eqref{hyper3p} with $T=0$  and shown in \cite{KLNT11} in the absence of bending ($B=0$). The next result is a step in that direction when $\Omega$ is the unit ball $\unb$ of $\mathbb{R}^d$.

\begin{proposition}\label{asterix}
Let $\Omega=\unb$ with $d\in\{ 1, 2\}$, $\lambda>0$, $B>0$, $T\ge 0$, and let $(u^0,u^1)\in H^4(\unb)\times H^2(\unb)$ be such that $u^0>-1$ in $\unb$ and both $u^0$ and $u^1$ satisfy the boundary conditions \eqref{hyper2}. Let $u$ be the maximal solution to either \eqref{hyper1}-\eqref{hyper3} with initial condition $(u^0,u^1)$ enjoying the properties listed in Theorem~\ref{ThyperIntroduction} or \eqref{hyper1p}-\eqref{hyper3p} with initial condition $u^0$ enjoying the properties listed in Theorem~\ref{TparaIntroduction}. Let $\tau_m$ be its maximal existence time. If $\lambda$ is sufficiently large (see \eqref{aq0} below for a quantitative lower bound), then $\tau_m<\infty$.
\end{proposition}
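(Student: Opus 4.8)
The plan is a Kaplan-type (eigenfunction) argument, which is precisely where the restriction $\Omega=\unb$ enters. The positivity of the Green function of $B\Delta^2-T\Delta$ under the clamped boundary conditions \eqref{hyper2s} on $\unb$ -- due to Boggio \cite{Bo05} for $T=0$ and to \cite{LWxx} in the class of radially symmetric functions for $T>0$ -- combined with the Krein--Rutman theorem yields a number $\mu_1>0$ and a radially symmetric function $\varphi_1\in C^\infty(\cunb)$ with $\varphi_1>0$ in $\unb$, $\varphi_1=\partial_\nu\varphi_1=0$ on $\uns$, and $B\Delta^2\varphi_1-T\Delta\varphi_1=\mu_1\varphi_1$ in $\unb$, normalized by $\int_{\unb}\varphi_1\,\rd x=1$. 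I would then monitor the moment $E(t):=\int_{\unb}u(t,x)\,\varphi_1(x)\,\rd x$.

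Multiplying \eqref{hyper1} (resp.\ \eqref{hyper1p}) by $\varphi_1$ and integrating over $\unb$, two integrations by parts -- legitimate because $u(t)\in H^4(\unb)$ for (a.e.)\ $t$ and both $u(t)$ and $\varphi_1$ satisfy \eqref{hyper2} -- convert the elliptic term into $\mu_1 E(t)$ by the eigenvalue equation. As long as the solution exists one has $u(t,\cdot)>-1$ in $\unb$, hence $1+E(t)>0$, and since $\varphi_1\,\rd x$ is a probability measure, Jensen's inequality for the convex map $s\mapsto(1+s)^{-2}$ on $(-1,\infty)$ gives $\int_{\unb}\varphi_1(1+u)^{-2}\,\rd x\ge(1+E)^{-2}$. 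Therefore, for (a.e.)\ $t\in[0,\tau_m)$,
$$\gamma^2 E''+E'+\mu_1 E\le-\frac{\lambda}{(1+E)^2}$$
in the hyperbolic case, and $E'+\mu_1 E\le-\lambda(1+E)^{-2}$ in the parabolic case; in the hyperbolic case $E\in C^1([0,\tau_m))$ with $E'$ locally absolutely continuous, by the regularity in Theorem~\ref{ThyperIntroduction}.

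Writing $y:=1+E>0$, both right-hand sides equal $g(y):=\mu_1-\mu_1 y-\lambda y^{-2}$, and a one-variable computation gives $\max_{y>0}g(y)=g\big((2\lambda/\mu_1)^{1/3}\big)=\mu_1-\tfrac32\mu_1(2\lambda/\mu_1)^{1/3}$, which is negative precisely when $\lambda>4\mu_1/27$; this is the quantitative lower bound announced in the statement. Fixing such a $\lambda$ and setting $\delta:=\tfrac32\mu_1(2\lambda/\mu_1)^{1/3}-\mu_1>0$, one has $g(y)\le-\delta$ for all $y>0$. In the parabolic case this gives $y'\le-\delta$, hence $y(t)\le y(0)-\delta t$. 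In the hyperbolic case $\gamma^2 y''+y'\le-\delta$; putting $w:=y'+\delta$ one obtains $\gamma^2 w'+w\le0$, so $w(t)\le w(0)e^{-t/\gamma^2}$ and, upon integration, $y(t)\le y(0)+\gamma^2\max\{0,y'(0)+\delta\}-\delta t$. In either case $y$ would vanish at a finite time $\bar t$, bounded explicitly in terms of $E(0)=\int_{\unb}u^0\varphi_1\,\rd x$, $E'(0)=\int_{\unb}u^1\varphi_1\,\rd x$, $\mu_1$, $\lambda$, $\gamma$, which contradicts $y(t)=1+E(t)>0$ on $[0,\tau_m)$. Hence $\tau_m\le\bar t<\infty$.

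The only genuinely delicate point is justifying the moment identity and the differential inequality under the comparatively weak time-regularity available for the hyperbolic problem, where $u$ lies in $H^4$ only in an $L_1$-in-time sense and the PDE holds merely a.e.\ in time; once this is handled (the regularity in Theorem~\ref{ThyperIntroduction} suffices), the rest is a soft eigenfunction computation together with an elementary analysis of a scalar (first- or second-order) ODE, and the argument is essentially identical for the hyperbolic and the parabolic problems.
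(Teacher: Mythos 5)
Your proposal is correct and follows essentially the same route as the paper: the eigenfunction $\phi_1>0$ of $B\Delta^2-T\Delta$ from Lemma~\ref{L0}, the moment $\int_{\unb}u\,\phi_1\,\rd x$, Jensen's inequality for the convex nonlinearity, the same threshold $\lambda>4m_1/27$, and the same elementary first-/second-order ODE comparison leading to a finite-time contradiction with $1+E>0$. Your substitution $y=1+E$ and the constant $\delta$ are just a reparametrization of the paper's function $\chi$ and of $\chi(z_\lambda)$, so the two arguments coincide in substance.
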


It is worth pointing out that the outcome of Proposition~\ref{asterix} complies with the numerical simulations of \eqref{hyper1p}-\eqref{hyper3p} performed in \cite{LL12} in $\unb$ and showing the occurrence of finite time touchdown. The proof of Proposition~\ref{asterix} is given in Section~\ref{sec41} and relies on the eigenfunction method.\\

Owing to the study carried out in Section~\ref{sec2}, we are able to refine this result in the radially symmetric setting and show that the touchdown behavior indeed starts exactly above the threshold value $\lambda_*$ defined in Theorem~\ref{TSSIntroduction}.

\begin{proposition}\label{Majestix}
Assume $\Omega=\unb$ with $d\in\{ 1, 2\}$ and let $(u^0,u^1)$ be radially symmetric initial conditions satisfying the requirements of Theorem~\ref{ThyperIntroduction} if $\gamma>0$ or Theorem~\ref{TparaIntroduction} if $\gamma=0$. Then, if $\lambda>\lambda_*$, the corresponding maximal solution to \eqref{hyper1}-\eqref{hyper3} or \eqref{hyper1p}-\eqref{hyper3p} on $[0,\tau_m)$ does not exist globally, that is, $\tau_m<\infty$.
\end{proposition}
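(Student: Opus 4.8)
The idea is to argue by contradiction: assuming that a global solution exists for some $\lambda>\lambda_*$, I will manufacture from it a radially symmetric stationary solution of \eqref{hyper1s}-\eqref{hyper2s} for that value of $\lambda$, in contradiction with Theorem~\ref{TSSIntroduction}. So suppose $\lambda>\lambda_*$ and $\tau_m=\infty$. By Theorem~\ref{ThyperIntroduction}(ii) (resp.\ Theorem~\ref{TparaIntroduction}(ii)) we then have $u(t)>-1$ in $\unb$ for every $t\ge0$, and by part~(iv) of the respective theorem each $u(t)$ is radially symmetric. Integrating \eqref{hyper1} (resp.\ \eqref{hyper1p}) over $(0,S)$ and dividing by $S>0$, the time average $\bar u_S:=S^{-1}\int_0^S u(t)\,\rd t$ is a radially symmetric function with $\bar u_S=\partial_\nu\bar u_S=0$ on $\uns$ which solves, in $\unb$,
\[
B\Delta^2\bar u_S-T\Delta\bar u_S=\frac{R_S}{S}-\frac{\lambda}{S}\int_0^S\frac{\rd t}{(1+u(t))^2},\qquad R_S:=u^0-u(S)+\gamma^2\bigl(u^1-\partial_t u(S)\bigr),
\]
with the convention $\gamma=0$ in the parabolic case. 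Since $z\mapsto(1+z)^{-2}$ is convex on $(-1,\infty)$ and $u(t)>-1$ for all $t$, Jensen's inequality gives $S^{-1}\int_0^S(1+u(t))^{-2}\,\rd t\ge(1+\bar u_S)^{-2}$, so that $\bar u_S$ is a subsolution of the perturbed stationary problem $B\Delta^2w-T\Delta w=-\lambda(1+w)^{-2}+S^{-1}R_S$ in $\unb$ with clamped boundary conditions.

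Let $\mathcal{G}$ denote the Green operator of $B\Delta^2-T\Delta$ on radially symmetric functions subject to \eqref{hyper2s}; by \cite{LWxx} it is positivity preserving. Setting $\overline w_S:=\mathcal{G}\bigl[(S^{-1}R_S)^+\bigr]\ge0$, a direct computation shows that $\overline w_S$ is a supersolution of the perturbed problem and that $\bar u_S\le\overline w_S$. Since $z\mapsto-\lambda(1+z)^{-2}$ is increasing on $(-1,\infty)$, the monotone iteration scheme started from $\bar u_S$ stays between $\bar u_S$ and $\overline w_S$, preserves radial symmetry, and converges to a radially symmetric solution $w_S$ of the perturbed problem with $\bar u_S\le w_S\le\overline w_S$. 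Because $d\le2$ and $H^4(\unb)\hookrightarrow C(\cunb)$, one has $\|\overline w_S\|_{C(\cunb)}\le C\,S^{-1}\|R_S\|_{L_2(\unb)}$, so that $w_S\le\rho_S$ with $\rho_S\to0$ as $S\to\infty$ as soon as $\sup_{S}\|R_S\|_{L_2(\unb)}<\infty$. If in addition $w_S\ge-1+\delta$ for all large $S$ and some $\delta>0$, then the right-hand sides $-\lambda(1+w_S)^{-2}+S^{-1}R_S$ are bounded in $L_2(\unb)$, hence $(w_S)$ is bounded in $H^4(\unb)\hookrightarrow C^2(\cunb)$; letting $S\to\infty$ along a subsequence, $w_S$ converges to a radially symmetric solution of \eqref{hyper1s}-\eqref{hyper2s} for the value $\lambda>\lambda_*$, which contradicts Theorem~\ref{TSSIntroduction}. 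Consequently $\tau_m<\infty$, as claimed.

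The crux of the argument, and the step I expect to be hardest, is the uniform-in-time control of the global solution --- in particular the lower bound $\inf_{t\ge0}\min_{\cunb}u(t)>-1$ (ruling out touchdown ``at infinity''), which, via $\int_{\unb}(1+\bar u_S)^{-1}\,\rd x\le S^{-1}\int_0^S\!\!\int_{\unb}(1+u(t))^{-1}\,\rd x\,\rd t$ and some uniform regularity of $\bar u_S$, is what yields both $\sup_S\|R_S\|_{L_2}<\infty$ and $w_S\ge-1+\delta$. The natural tool is the Lyapunov functional $E(t):=\tfrac{\gamma^2}{2}\|\partial_t u(t)\|_{L_2}^2+\tfrac{B}{2}\|\Delta u(t)\|_{L_2}^2+\tfrac{T}{2}\|\nabla u(t)\|_{L_2}^2-\lambda\int_{\unb}(1+u(t))^{-1}\,\rd x$, which is non-increasing (test \eqref{hyper1}/\eqref{hyper1p} with $\partial_t u$); however, extracting from $E(t)\le E(0)$ a bound on $\|u(t)\|_{H^2}$ or on $\int_{\unb}(1+u(t))^{-1}\,\rd x$ requires breaking the usual circularity between interior regularity of $u$ and its distance to $-1$. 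To do so I would, on the one hand, reduce to the range $\lambda_*<\lambda\le\tfrac{4}{27}\mu_1$ (where $\mu_1$ is the first eigenvalue of $B\Delta^2-T\Delta$ in the radially symmetric class with clamped boundary conditions), since for $\lambda>\tfrac{4}{27}\mu_1$ the finiteness of $\tau_m$ already follows from Proposition~\ref{asterix}; and, on the other hand, exploit the differential inequality $\gamma^2\ddot y+\dot y\le-\mu_1 y-\lambda(1+y)^{-2}$ for $y(t):=\int_{\unb}u(t)\varphi_1\,\rd x$ (with $\varphi_1>0$ the associated first eigenfunction, normalised by $\int_{\unb}\varphi_1\,\rd x=1$, and using Jensen's inequality again), which in that range forces $y(t)\ge y_-(\lambda)>-1$ for all $t$, where $y_-(\lambda)\in(-1,0)$ is the smaller root of $\mu_1 y(1+y)^2=-\lambda$. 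Upgrading this weighted-average bound --- together with the energy inequality and the parabolic (resp.\ hyperbolic) smoothing established in Section~\ref{sec32a} (resp.\ Section~\ref{sec31}) --- to the pointwise bound $\min_{\cunb}u(t)\ge-1+\delta$, and in particular ruling out the scenario $\liminf_{t\to\infty}\min_{\cunb}u(t)=-1$ with $\tau_m=\infty$, is the delicate point on which the proof hinges.
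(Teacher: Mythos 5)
There is a genuine gap, and it sits exactly where you place it yourself: the assumption $\tau_m=\infty$ gives you \emph{no} uniform-in-time information. Theorems~\ref{ThyperIntroduction} and~\ref{TparaIntroduction} only say that a \emph{finite-time} singularity forces $\min_{\cunb}u(t)\to -1$; a hypothetical global solution with $\lambda>\lambda_*$ could perfectly well have $\min_{\cunb}u(t)\to -1$ or $\|u(t)\|_{H^2}\to\infty$ as $t\to\infty$, and then $\sup_S\|R_S\|_{L_2}<\infty$, the bound $w_S\ge -1+\delta$, and the compactness needed to pass $S\to\infty$ all fail, so no stationary solution is produced and no contradiction with Theorem~\ref{TSSIntroduction} is reached. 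Worse, the repair you sketch is not salvageable as stated: from testing with the first eigenfunction you only get the \emph{one-sided} inequality $\gamma^2\ddot y+\dot y\le -\mu_1 y-\lambda(1+y)^{-2}$, and an upper bound on $\gamma^2\ddot y+\dot y$ can never yield a lower bound on $y$. In fact, as $y\to-1^+$ the right-hand side tends to $-\infty$, so the inequality drives $y$ \emph{down} toward $-1$ rather than confining it above $y_-(\lambda)$; this is precisely the mechanism exploited in Proposition~\ref{asterix}, not an obstruction to touchdown. So the ``delicate point on which the proof hinges'' is not merely unfinished --- the proposed route to it is broken, and the time-averaging scheme cannot be closed without an a priori bound that is simply not available for $\lambda>\lambda_*$.

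The paper avoids all of this by a sharper version of the eigenfunction method you already invoke: instead of the first eigenfunction of $A$, it uses the positive eigenfunction $\phi_*$ of the \emph{linearization at the extremal solution}, $A+\lambda_* g'(u_{\lambda_*})$, whose first eigenvalue is $\mu_1(u_{\lambda_*})=0$ (Lemma~\ref{leb3c} and Proposition~\ref{prb7}). Setting $M(t)=\int_{\unb}u(t)\phi_*\,\rd x$, multiplying \eqref{hyper1} (or \eqref{hyper1p}) by $\phi_*$, subtracting the stationary equation for $u_{\lambda_*}$, and using the convexity inequality $g(u)\ge g(u_{\lambda_*})+g'(u_{\lambda_*})(u-u_{\lambda_*})$ together with Jensen's inequality gives
\begin{equation*}
\gamma^2\frac{\rd^2M}{\rd t^2}+\frac{\rd M}{\rd t}\le -(\lambda-\lambda_*)\,g(M)\le 0\ ,
\end{equation*}
so $M(t)\le K_{0,\gamma}$, hence $g(M(t))\ge g(K_{0,\gamma})>0$, and $M$ decreases at least linearly, violating $M\ge -1$ after a finite time. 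This argument needs no global bounds on $u$ whatsoever and works for every $\lambda>\lambda_*$; the role of the stationary analysis of Section~\ref{sec2} is only to furnish $u_{\lambda_*}$ and $\phi_*$, not a nonexistence statement to contradict. If you want to keep your own strategy, you would first have to prove a uniform eventual bound of the type $\inf_{t\ge0}\min_{\cunb}u(t)>-1$ plus an $H^2$ bound for global solutions, which is an open-ended problem in itself.
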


The proof of Proposition~\ref{Majestix} is performed in Section~\ref{sec42} and also relies on the eigenfunction method, but with a more accurate choice than in the proof of Proposition~\ref{asterix} as already noticed in \cite{Gu10}.\\

Let us conclude the introduction with some remarks on the qualitative behavior of solutions to the evolution problem in the ball $\unb$. Proposition~\ref{asterix} and Proposition~\ref{Majestix} show the occurrence of a finite time singularity,
but do not provide information about the precise behavior near touchdown time. According to the numerical simulations performed in \cite{LL12}, the fourth-order term has a strong influence on the way solutions touch down in finite time as this might take place on a circle (for $d=2$). This markedly contrasts the second-order case, where touchdown occurs only at the single point $x=0$, see \cite[Theorem 8.3.4]{EGG10}.

When a solution to \eqref{hyper1}-\eqref{hyper3} does not touch down in finite time, then it exists globally in time and might even be bounded away from $-1$ as well as be bounded in $H^2$ according to Theorem~\ref{ThyperIntroduction} (if $\gamma>0$) and Theorem~\ref{TparaIntroduction} (if $\gamma=0$). A natural next step to understand its dynamics is to investigate its large time behavior. While this seems to be an open problem for a general domain~$\Omega$, the analysis performed in Section~\ref{sec2} for $\Omega=\unb$ in the radially symmetric setting paves the way for a better understanding of this issue. On the one hand, Proposition~\ref{thb5} below entails that one may apply the principle of linearized stability to show that $U(s)$ is locally asymptotically stable when $s\in [0,s_*)$.
On the other hand, it might be possible to use the \L ojasiewicz-Simon inequality as in \cite{Gu10} to establish convergence to a single steady-state.

\section{Radially symmetric stationary solutions}\label{sec2}

In the following, if $S(\Omega)$ is a space of functions defined on $\Omega$, we write $S_D(\Omega)$ for the subspace of $S(\Omega)$ consisting of  functions $u$ satisfying the Dirichlet boundary conditions \eqref{hyper2s}, if meaningful. If $\Omega$ is the unit ball $\unb$ of $\RR^d$, then  $S_r(\unb)$ stands for the subspace of $S(\unb)$ consisting of radially symmetric functions. Clearly, $S_{D,r}(\unb):=S_D(\unb)\cap S_r(\unb)$.  If $S(\Omega)$ is a normed vector space, then $\|\cdot\|_S$ stands for its norm. For $p\in [1,\infty]$ we denote the norm of $L_p(\Omega)$ simply by $\|\cdot\|_p$.\\
 

Recall that the stationary solutions of \eqref{hyper1}-\eqref{hyper2} satisfy
\begin{align}
B \Delta^2 u - T \Delta u & =  - \lambda g(u)\!\!\!\!\!\!\!\! &&\text{ in }\;\; \unb\,, \label{stat1} \\
u = \partial_\nu u & =  0 &&\text{ on }\;\; \uns\,,     \label{stat2}
\end{align}
where $g(\xi):=(1+\xi)^{-2}$ for $\xi>-1$.

\begin{definition}\label{def}
A radially symmetric classical solution $u$ (with parameter $\lambda$) of the boundary value problem \eqref{stat1}-\eqref{stat2} is a radially symmetric function $u\in C_r^4(\unb)\cap C_r^2(\cunb)$ satisfying $u>-1$ in $\cunb$ and solving \eqref{stat1}-\eqref{stat2} in the classical sense.

We denote the set of all radially symmetric classical solutions with parameter $\lambda$ to the boundary value problem \eqref{stat1}-\eqref{stat2} by $\mathcal{S}_r^\lambda$. 
\end{definition}

Similarly, a radially symmetric function $u\in C_r^4(\unb)\cap C_r^2(\cunb)$ satisfying $u>-1$ in $\cunb$ is a classical subsolution of \eqref{stat1}-\eqref{stat2} (with parameter $\lambda$), if it satisfies $B \Delta^2 u - T \Delta u \le - \lambda g(u)$ in $\unb$ and \eqref{stat2} on $\uns$.

\bigskip

We introduce the operator 
$$
Au:=B\Delta^2 u-T\Delta u\ ,\quad u\in H_{D,r}^4(\unb)\ ,
$$
and recall the following well-known properties:

\begin{lemma}\label{L0}
 $A\in\ml(H_{D,r}^4(\unb),L_{2,r}(\unb))$ is invertible with $$A^{-1}\in\ml(L_{2,r}(\unb),H_{D,r}^4(\unb))\cap \ml(C_{D,r}^\alpha(\cunb),C_{D,r}^{4+\alpha}(\cunb))$$ for each $\alpha\in (0,1)$. Moreover, there are $m_1>0$ and $\phi_1\in C_{D,r}^{4}(\cunb)$ with $\phi_1>0$ in $\unb$,  $\|\phi_1\|_1=1$, and
$A\phi_1 =m_1 \phi_1$.
\end{lemma}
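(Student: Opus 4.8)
\textbf{Proof proposal for Lemma~\ref{L0}.} The plan is to treat the two stated properties of $A$ separately: first the isomorphism property and the associated elliptic regularity, then the existence of a positive principal eigenfunction $\phi_1$ with simple eigenvalue $m_1$. For the first part, I would start from the bilinear form
\begin{equation*}
a(u,v):=B\int_{\unb}\Delta u\,\Delta v\,\rd x+T\int_{\unb}\nabla u\cdot\nabla v\,\rd x\ ,\qquad u,v\in H_{D,r}^2(\unb)\ ,
\end{equation*}
which, since $B>0$ and $T\ge 0$, is continuous and coercive on $H_{D,r}^2(\unb)$ by the Poincar\'e inequality (the $H^2$-norm on $H^2_D$ is equivalent to $\|\Delta\cdot\|_2$). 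The Lax--Milgram theorem then gives, for every $f\in L_{2,r}(\unb)$, a unique weak solution $u\in H_{D,r}^2(\unb)$ of $Au=f$; standard elliptic regularity for the polyharmonic operator $B\Delta^2-T\Delta$ with Dirichlet (clamped) boundary conditions on the smooth domain $\unb$ upgrades this to $u\in H^4(\unb)$, and by uniqueness $u$ is radially symmetric whenever $f$ is, so $A^{-1}\in\ml(L_{2,r}(\unb),H_{D,r}^4(\unb))$. The Schauder estimates for the same operator give the analogous mapping property $A^{-1}\in\ml(C_{D,r}^\alpha(\cunb),C_{D,r}^{4+\alpha}(\cunb))$ for $\alpha\in(0,1)$; here one must check that the boundary conditions $u=\partial_\nu u=0$ form a system satisfying the complementing (Lopatinskii--Shapiro) condition for $B\Delta^2-T\Delta$, which is classical.

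For the second part, I would invoke the maximum principle established in \cite{LWxx}: on the unit ball, in the radially symmetric class, the resolvent $A^{-1}$ is a \emph{positive} operator, i.e. it maps nonnegative functions to functions that are positive in $\unb$. Composing with a compact embedding ($C_{D,r}^{4+\alpha}(\cunb)\hookrightarrow C_{D,r}^\alpha(\cunb)$ is compact), $A^{-1}$ is a compact, strongly positive operator on the ordered Banach space $C_{D,r}^\alpha(\cunb)$ whose positive cone has nonempty interior. The Krein--Rutman theorem then yields a simple eigenvalue $1/m_1>0$ of $A^{-1}$ of largest modulus, with an eigenvector $\phi_1$ lying in the interior of the cone, hence $\phi_1>0$ in $\unb$ and $\phi_1\in C_{D,r}^{4}(\cunb)$ by the regularity above (and indeed smoother); normalizing by $\|\phi_1\|_1=1$ gives $A\phi_1=m_1\phi_1$ as claimed.

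The main obstacle is the positivity statement: for clamped boundary conditions the operator $B\Delta^2$ (let alone $B\Delta^2-T\Delta$) does \emph{not} in general enjoy a maximum principle, so one genuinely needs the recent result of \cite{LWxx} which establishes positivity of the Green function for $B\Delta^2-T\Delta$ with $T\ge 0$ in the radially symmetric setting on $\unb$ (extending Boggio's classical computation for $T=0$). Everything else --- coercivity, elliptic and Schauder regularity, the complementing condition, compactness, and the application of Krein--Rutman --- is standard and I would only sketch it, citing \cite{GGS10} for the fourth-order regularity theory and for the abstract positivity/Krein--Rutman machinery. One small point to record is that the principal eigenvalue is defined intrinsically (independently of the function-space realization) because the eigenfunction is smooth, so there is no ambiguity between the $L_2$ and the Hölder realizations of $A$.
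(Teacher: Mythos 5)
Your first half (Lax--Milgram for the coercive form on $H_{D,r}^2(\unb)$, ADN/Schauder regularity for the clamped problem, radial symmetry of $A^{-1}f$ by uniqueness and rotation invariance) is correct and is exactly what the paper compresses into the citation of \cite[Theorems~2.15, 2.19, 2.20]{GGS10}; no issue there, and likewise the lemma is silent on simplicity of $m_1$, so that extra claim is harmless but not needed.

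The gap is in the Krein--Rutman step. You apply the strong form of the theorem ``compact, strongly positive operator on an ordered Banach space whose positive cone has nonempty interior,'' but the positive cone of $C_{D,r}^{\alpha}(\cunb)$ --- a space in which every element satisfies $u=\partial_\nu u=0$ on $\uns$ --- has \emph{empty} interior: no function vanishing on $\uns$ can dominate a sup-norm ball. Moreover the eigenfunction you are after itself satisfies the clamped conditions, so it could never be an interior point of any nonnegativity cone; for second-order Dirichlet problems one circumvents this by working in $C^1_0$ and invoking the Hopf boundary-point lemma, but here $\partial_\nu\phi_1=0$ on $\uns$, so that route is closed. Hence the sentence ``an eigenvector lying in the interior of the cone, hence $\phi_1>0$'' does not go through as written. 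The repair is to use the positivity of the Green function of $B\Delta^2-T\Delta$ with clamped conditions in the radial class (the result of \cite{LWxx} you correctly identify as the crucial input): then $A^{-1}$ is a compact, positive, irreducible kernel operator on $L_{2,r}(\unb)$, and the Krein--Rutman theorem in its weak/irreducible form (equivalently Jentzsch's or de Pagter's theorem) yields that $r(A^{-1})>0$ is an eigenvalue with a nonnegative eigenfunction $\phi_1\not\equiv 0$; positivity of the spectral radius can also be checked by hand, since $A^{-1}\mathbf{1}_K\ge \delta\,\mathbf{1}_K$ for a compact $K\subset\unb$ and some $\delta>0$, whence $r(A^{-1})\ge\delta$. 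Writing $m_1=1/r(A^{-1})$, the relation $\phi_1=m_1A^{-1}\phi_1$ together with the strict sign-preserving property of Lemma~\ref{leb1} (i.e. \cite[Theorem~1.4]{LWxx}) then gives $\phi_1>0$ in $\unb$, and $\phi_1\in C_{D,r}^4(\cunb)$ by the regularity part. Note that the paper itself does not rerun any of this: it cites \cite[Theorem~4.7]{LWxx}, where the existence of the positive eigenvalue with positive eigenfunction is already established, so your sketch is in effect reconstructing that result and must do so with the correct version of Krein--Rutman.
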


\begin{proof}
The invertibility of $A$ and the regularity properties of $A^{-1}$ are consequences of \cite[Theorem~2.15, Theorem~2.19, Theorem~2.20]{GGS10}. That there is a positive eigenvalue with a positive eigenfunction follows from \cite[Theorem 4.7]{LWxx}.
\end{proof}

\medskip

We further define
\begin{equation}
\lambda_* := \sup{ \left\{ \lambda>0\, :\, \mathcal{S}_r^\lambda \;\text{ is non-empty} \right\} } \in [0,\infty]\ , \label{stat3}
\end{equation}
and first derive some elementary properties of $\mathcal{S}_r^\lambda$.

\begin{lemma}\label{L1a}
The following hold:
\begin{itemize}
\item[(i)] $\mathcal{S}_r^0=\{0\}$ and $\mathcal{S}_r^\lambda\subset C_{D,r}^{4}(\cunb)$ for $\lambda>0$;
\item[(ii)] if $\lambda>0$ and $u\in \mathcal{S}_r^\lambda$, then $-1<u\le 0$ in $\cunb$;
\item[(iii)] the threshold value $\lambda_*$ defined in \eqref{stat3} is finite.
\end{itemize}
\end{lemma}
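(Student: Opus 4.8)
The plan is to treat the three assertions in turn, all resting on the invertibility and positivity structure encoded in Lemma~\ref{L0}.

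\medskip

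\textbf{(i)} For $\lambda=0$, equation \eqref{stat1} becomes $Au=0$ with $u\in H_{D,r}^4(\unb)$ (indeed any classical solution in $C_r^4(\unb)\cap C_r^2(\cunb)$ with $u>-1$ lies in $H_{D,r}^4(\unb)$ by elliptic regularity, since the right-hand side $-\lambda g(u)$ vanishes), and injectivity of $A$ from Lemma~\ref{L0} forces $u=0$; clearly $0\in\mathcal{S}_r^0$, so $\mathcal{S}_r^0=\{0\}$. For $\lambda>0$ and $u\in\mathcal{S}_r^\lambda$, the constraint $u>-1$ in $\cunb$ together with continuity on the compact set $\cunb$ gives $\inf_{\cunb}u>-1$, hence $g(u)=(1+u)^{-2}\in C_r(\cunb)\subset C_{D,r}^\alpha(\cunb)$ for small $\alpha$ (more precisely $g(u)$ is as smooth as $u$, and $u\in C_r^2(\cunb)$ suffices for $g(u)\in C_r^0(\cunb)$, which after one bootstrap lands in $C_r^\alpha$). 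Then $u=A^{-1}(-\lambda g(u))\in C_{D,r}^{4+\alpha}(\cunb)\subset C_{D,r}^4(\cunb)$ by the mapping property of $A^{-1}$ in Lemma~\ref{L0}.

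\medskip

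\textbf{(ii)} This is where the maximum principle from \cite{LWxx} (built into Lemma~\ref{L0} via the positive eigenfunction, and more directly via positivity of $A^{-1}$) enters. For $\lambda>0$ and $u\in\mathcal{S}_r^\lambda$ we have $Au=-\lambda g(u)=-\lambda(1+u)^{-2}<0$ in $\unb$. Since $A^{-1}$ is positivity preserving on $L_{2,r}(\unb)$ (this is precisely the radial Boggio-type maximum principle of \cite{LWxx} referenced in the proof of Lemma~\ref{L0}), applying $A^{-1}$ to the inequality $Au\le 0$ yields $u\le 0$ in $\cunb$; combined with the standing constraint $u>-1$ we obtain $-1<u\le 0$. (If the positivity of $A^{-1}$ is not literally stated as such in the excerpt, I would instead argue via the positive eigenpair $(m_1,\phi_1)$: multiply $Au=-\lambda g(u)$ by $\phi_1$, integrate by parts using the self-adjointness of $A$ on $H_{D,r}^4$ and the clamped boundary conditions satisfied by both $u$ and $\phi_1$, to get $m_1\int_{\unb}u\,\phi_1\,\rd x=-\lambda\int_{\unb}g(u)\phi_1\,\rd x<0$, hence $\int_{\unb}u\,\phi_1\,\rd x<0$; but this only gives a weighted-average sign, not the pointwise bound, so the pointwise statement genuinely needs the positivity of $A^{-1}$, and I would cite \cite{LWxx} for it.)

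\medskip

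\textbf{(iii)} The idea is the classical eigenfunction obstruction. Suppose $u\in\mathcal{S}_r^\lambda$ with $\lambda>0$. Test \eqref{stat1} against $\phi_1$ and integrate by parts twice, using that $u$ and $\phi_1$ both satisfy \eqref{hyper2s} so that all boundary terms vanish and $A$ is symmetric; this gives
\begin{equation*}
m_1\int_{\unb}u\,\phi_1\,\rd x \;=\; \int_{\unb}(Au)\,\phi_1\,\rd x \;=\; -\lambda\int_{\unb}\frac{\phi_1}{(1+u)^2}\,\rd x\ .
\end{equation*}
By part (ii), $-1<u\le 0$, so $0<1+u\le 1$ and therefore $(1+u)^{-2}\ge 1$, whence $\int_{\unb}\phi_1(1+u)^{-2}\,\rd x\ge\int_{\unb}\phi_1\,\rd x=1$ (recall $\|\phi_1\|_1=1$ and $\phi_1>0$). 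On the other hand $u\le 0$ and $\phi_1>0$ give $\int_{\unb}u\,\phi_1\,\rd x\le 0$, so the left-hand side is $\le 0$; combining,
\begin{equation*}
0\;\ge\; m_1\int_{\unb}u\,\phi_1\,\rd x\;=\;-\lambda\int_{\unb}\frac{\phi_1}{(1+u)^2}\,\rd x\;\le\;-\lambda\ ,
\end{equation*}
which is vacuous as stated — so the sharper route is needed: bound the left side from below. Since $u>-1$, we also have $u\,\phi_1\ge -\phi_1$, hence $m_1\int u\phi_1 \ge -m_1\int\phi_1=-m_1$. Therefore $-m_1\le -\lambda\int_{\unb}\phi_1(1+u)^{-2}\,\rd x\le -\lambda$, giving $\lambda\le m_1$. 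Thus $\mathcal{S}_r^\lambda=\emptyset$ for $\lambda>m_1$, so $\lambda_*\le m_1<\infty$.

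\medskip

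\textbf{Main obstacle.} The only genuinely delicate point is part (ii): without a maximum/comparison principle — which for clamped fourth-order problems fails on general domains — one cannot pass from the pointwise sign of $Au$ to the pointwise sign of $u$. The whole argument hinges on invoking the radially symmetric maximum principle of \cite{LWxx} (equivalently, positivity of $A^{-1}$), and I would make sure that reference does the heavy lifting here. Parts (i) and (iii) are then routine: (i) is elliptic bootstrapping via the mapping properties of $A^{-1}$, and (iii) is the standard positive-eigenfunction test, which only requires the crude two-sided bound $-1<u\le 0$ already secured in (ii) and the normalization $\|\phi_1\|_1=1$.
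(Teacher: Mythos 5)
Your proof is correct and follows essentially the same route as the paper: regularity in (i) via the mapping properties of $A^{-1}$ from Lemma~\ref{L0}, the pointwise bound in (ii) via the radial maximum principle of \cite{LWxx} (the paper cites \cite[Theorem~1.4]{LWxx}, i.e.\ Lemma~\ref{leb1}), and in (iii) the test with the positive eigenfunction $\phi_1$ together with $-1<u\le 0$ and $\|\phi_1\|_1=1$, yielding $\lambda\le m_1$ exactly as in the paper. The brief detour in (iii) through the vacuous inequality is self-corrected, so there is no gap.
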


\begin{proof}
The first statement of (i) readily follows from Lemma~\ref{L0}. If $u\in\mathcal{S}_r^\lambda$,
then $g(u)$ belongs to $C^{2 }(\cunb)$ since $g$ is smooth in $(-1,\infty)$ and $u>-1$ in $\cunb$. Thus $u\in C_{D,r}^{4}(\cunb)$ by Lemma~\ref{L0}. Moreover, $-1<u\le 0$ in $\cunb$ by \cite[Theorem 1.4]{LWxx} since $\lambda g(u)\ge 0$ (see also Lemma~\ref{leb1} below). Consequently, testing \eqref{stat1}-\eqref{stat2} by $\phi_1>0$ introduced in Lemma~\ref{L0} yields
$$
-m_1\int_{\unb}\phi_1\,\rd x \le m_1\int_{\unb}\phi_1\, u\,\rd x =-\lambda\int_{\unb}\phi_1\,g(u)\,\rd x\le -\lambda\int_{\unb}\phi_1\,\rd x\ ,
$$
whence $\lambda\le m_1$. Therefore, $\lambda_*\le m_1<\infty$.
\end{proof}

In fact, one can show that  $\lambda_*< m_1$. Indeed, assume $\lambda_*= m_1$ for contradiction  so that there are sequences $\lambda_n\rightarrow m_1$ and $u_n\in\mathcal{S}_r^{\lambda_n}$. Then, the above computation actually yields
$$
m_1-\lambda_n\ge \lambda_n\int_{\unb} \phi_1\, (g(u_n)-1)\,\rd x\ge 0\ .
$$
Since also
$$
\int_{\unb} \phi_1\, (g(u_n)-1)\,\rd x = \int_{\unb} \phi_1\, \frac{\vert u_n\vert (u_n+2)}{(1+u_n)^2}\,\rd x \ge \int_{\unb} \phi_1 \vert u_n\vert\,\rd x\ ,
$$
we conclude that
$$
\lim_{n\rightarrow \infty}  \lambda_n\int_{\unb} \phi_1\, g(u_n)\,\mathrm{d} x =m_1
\quad\text{and}\quad\lim_{n\rightarrow \infty}   \int_{\unb} \phi_1\, \vert u_n\vert\,\mathrm{d} x=0\ .
$$
From $u_n\in\mathcal{S}_r^{\lambda_n}$ we obtain
$$
 m_1\int_{\unb}\phi_1\, u_n\,\rd x =-\lambda_n\int_{\unb}\phi_1\,g(u_n)\,\rd x\ ,
$$
and letting $n\rightarrow\infty$ and using the previous limits imply that $m_1=0$, which contradicts Lemma~\ref{L0}.

\begin{remark}\label{RRR1}
Observe that the computation in the proof of Lemma~\ref{L1a} excludes also the existence of non-radially symmetric solutions to \eqref{hyper1s}-\eqref{hyper2s} for $\lambda>m_1$. An interesting question is whether there are non-radially symmetric solutions  for $\lambda\in (\lambda_*,m_1)$. This is not the case when $T=0$ as it is shown in \cite{BGW08} that all solutions to \eqref{hyper1s}-\eqref{hyper2s} are radially symmetric.
\end{remark}

\subsection{A continuous curve of stationary solutions}\label{sec21}

In this subsection we invoke the global bifurcation theory of  \cite[Section~2.1]{BDT00} for real analytic functions to establish the existence of a global curve of radially symmetric stationary solutions. This tool has also been used in \cite[Section~6.2]{EGG10} for the second-order case (that is, $B=0$). 

Since 
\begin{equation}\label{O}
\mathcal{O}:=\{u\in C_{D,r}^{1}(\cunb)\,:\, \vert u\vert <1\text{ in } \cunb\}
\end{equation}
is open in $C_{D,r}^{1}(\cunb)$, the mapping
\begin{equation}\label{F}
F:\RR\times\mathcal{O}\rightarrow C_{D,r}^{1}(\cunb)\ ,\quad (\lambda, u)\mapsto u+\lambda A^{-1} g(u)\ ,
\end{equation}
is well-defined according to Lemma~\ref{L0} and real analytic. Observe that $u\in\mathcal{S}_r^\lambda$ if $(\lambda,u)\in \RR\times\mathcal{O}$ with $F(\lambda,u)=0$, the bound $\vert u\vert<1$ following from Lemma~\ref{L1a}. Clearly, $F(0,0)=0$ and the partial Fr\'echet derivative $F_u(0,0)$ equals the identity in $C_{D,r}^{1}(\cunb)$. Thus, by the implicit function theorem, the zeros of $F$ near $(0,0)$ are given by a real analytic curve $(\lambda, V(\lambda))$ with $V(0)=0$. Moreover, there exists $\lambda_0\in (0,\infty]$, which is maximal with respect to the existence of a real analytic function $V:[0,\lambda_0)\rightarrow C_{D,r}^{1}(\cunb)$ for which $F(\lambda, V(\lambda))=0$ and $F_u(\lambda, V(\lambda))\in \ml(C_{D,r}^{1}(\cunb))$ is boundedly invertible for each $\lambda\in [0,\lambda_0)$. Consequently, the set
\begin{equation*}
\begin{split}
S:=\{(\lambda, u)\in (0,\infty)\times \mathcal{O}\,:\, F(\lambda,u)=0 \text{ and } &F_u(\lambda,u)\in \ml(C_{D,r}^{1}(\cunb))\\
& \text{ is boundedly invertible}\}
\end{split}
\end{equation*}
is non-empty as it contains the maximal arc-connected subset
\begin{equation}\label{A0}
\mathcal{A}_0:=\{(\lambda, V(\lambda))\,:\, \lambda\in (0,\lambda_0)\}\ .
\end{equation}
Note that $\lambda_0$ and $V$ are unique and necessarily $\lambda_0$ is finite since it belongs to $(0,\lambda_*]$. We have thus verified assumption (C1) from
\cite[Section 2.1]{BDT00}. For (C2) therein we may argue as in \cite[p.128]{EGG10} that this assumption  merely serves to show in the proof of \cite[Theorem 2.3 (iii)]{BDT00} that $S$ is open in its closure $\bar{S}$  and can thus be replaced by the stronger one that $(0,\infty)\times\mathcal{O}$ is open in $\RR\times C_{D,r}^{1}(\cunb)$. Then, since  $H_{D,r}^4(\unb)$ embeds compactly in   $C_{D,r}^{1}(\cunb)$, we may regard the operator $\lambda A^{-1} g'(u)\in\ml(C_{D,r}^{1}(\cunb),H_{D,r}^4(\unb))$ as a compact operator in $C_{D,r}^{1}(\cunb)$ for each $(\lambda,u)\in\RR\times\mathcal{O}$. Hence, by \cite[Theorem 4.25]{RudinFA}, the partial Fr\'echet derivative
$$
F_u(\lambda,u)=1+\lambda A^{-1} g'(u)\ ,\quad (\lambda,u)\in\RR\times\mathcal{O}\ ,
$$
is a Fredholm operator of index 0. The remark in \cite[p. 246]{BDT00} now entails that (C3)-(C5) therein hold. Next, we introduce the function
$$
\nu:(0,\infty)\times\mathcal{O}\rightarrow [0,\infty)\ ,\quad (\lambda,u)\mapsto \frac{1}{\min_{\cunb} \{1+u\}}\ .
$$
To verify (C6) from \cite{BDT00} consider a sequence $(\lambda_n,u_n)_{n\in\NN}$ in $(0,\infty)\times\mathcal{O}$ with $F(\lambda_n,u_n)=0$ and $\nu(\lambda_n,u_n)\le c<\infty$ for each $n\in \NN$. Then, by Lemma~\ref{L0}, each $u_n$ belongs to $C_{D,r}^4(\cunb)$ with $u_n\ge -1+c^{-1}$ in $\cunb$ and satisfies
$$
B \Delta^2 u_n - T \Delta u_n = - \lambda_n g(u_n) \;\;\text{ in }\;\; \unb\,.
$$
The above uniform lower bound on $u_n$ and the finiteness of $\lambda_*$ established in
Lemma~\ref{L1a} now imply that the sequence $(\lambda_ng(u_n))_{n\in\NN}$ is bounded in $L_{\infty}(\unb)$. Thus, $(\lambda_n,u_n)_{n\in\NN}$ is bounded in $(0,\lambda_*]\times H_{D,r}^4(\unb)$ and so $(\lambda_n,u_n)_{n\in\NN}$ has a converging subsequence in $[0,\lambda_*]\times C_{D,r}^{1}(\cunb)$. Hence (C6) in~\cite{BDT00} holds true. Setting $\bar{\lambda}:=0$, we clearly have $(\bar{\lambda},0)\not\in (0,\infty)\times\mathcal{O}$ and $(\bar{\lambda},0)$ is in the closure of $\mathcal{A}_0$ defined in \eqref{A0}, whence (C7) in \cite{BDT00}. Finally, suppose that $(\lambda_n,u_n)_{n\in\NN}$ is a sequence in $(0,\infty)\times\mathcal{O}$ with $F(\lambda_n,u_n)=0$ and $\nu(\lambda_n,u_n)\le c<\infty$ for each $n\in \NN$, which converges in $\RR\times C_{D,r}^{1}(\cunb)$ towards $(\lambda,u)\not\in (0,\infty)\times \mathcal{O}$. Then $-1+c^{-1}\le u_n\le 0$ in $\unb$ and $0\le \lambda_n\le \lambda_*$ for each $n\in\NN$ by  Lemma~\ref{L1a}, which entails that $-1+c^{-1}\le u\le 0$, whence $(\lambda,u)\in [0,\lambda_*]\times \mathcal{O}$ and $F(\lambda,u)=0$. Since $(\lambda,u)\not\in (0,\infty)\times \mathcal{O}$, this is only possible if $(\lambda,u)=(0,0)$. The implicit function theorem guarantees that $(\lambda_n,u_n)\in\mathcal{A}_0$ for $n$ large enough. This yields (C8) in \cite{BDT00}, and therefore, we are in a position to apply \cite[Theorem 2.4]{BDT00} and obtain:

\begin{theorem}\label{T19}
There is a continuous function $(\Lambda,U):(0,\infty)\rightarrow (0,\infty)\times C_{D,r}^{1 }(\cunb)$ with the following  properties:
\begin{itemize}
	\item[(i)] $U(s)\in\mathcal{S}_r^{\Lambda(s)}$ for each $s\in (0,\infty)$;
	\item[(ii)] $(\Lambda,U)((0,1))\subset \mathcal{A}_0$ and $\lim_{s\rightarrow 0} (\Lambda(s),U(s))=(0,0)$;
	\item[(iii)] $(\Lambda,U)$ is injective on $(\Lambda,U)^{-1}(S)$ and $$\lim_{s\rightarrow\infty}\big(\min_{\cunb} U(s)\big)=-1\ ;$$
	\item[(iv)]  at all points $s\in (\Lambda,U)^{-1}(S)$, $(\Lambda,U)$ is real analytic with $\Lambda'(s)\not= 0$. 
\end{itemize}
\end{theorem}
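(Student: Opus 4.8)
The plan is to verify, one by one, the hypotheses (C1)--(C8) of the real-analytic global bifurcation theorem of \cite[Section~2.1]{BDT00} for the map $F$ defined in \eqref{F}, and then invoke \cite[Theorem~2.4]{BDT00} to extract the continuous curve $(\Lambda,U)$. Most of the verification is already carried out in the paragraphs preceding the statement; the proof proper would therefore mostly consist of collecting these facts and translating the conclusion of the abstract theorem into statements (i)--(iv). First I would recall that $F$ is real analytic on the open set $(0,\infty)\times\mathcal{O}\subset\RR\times C_{D,r}^{1}(\cunb)$, that $F_u(\lambda,u)=1+\lambda A^{-1}g'(u)$ is Fredholm of index $0$ (being a compact perturbation of the identity, using the compact embedding $H_{D,r}^4(\unb)\hookrightarrow C_{D,r}^1(\cunb)$), and that the set $\mathcal{A}_0$ of \eqref{A0} provided by the implicit function theorem gives (C1); then list (C2)--(C8) as established in the running text, with the properness-type conditions (C6) and (C8) secured by the a priori bounds of Lemma~\ref{L1a}, namely $-1<u\le 0$, $0\le\lambda\le\lambda_*<\infty$, and the resulting $L_\infty$-bound on $\lambda g(u)$ together with elliptic regularity from Lemma~\ref{L0}.

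Next I would read off the output of \cite[Theorem~2.4]{BDT00}: there is a continuous curve $s\mapsto(\Lambda(s),U(s))$, $s\in(0,\infty)$, of zeros of $F$, hence by the remark after \eqref{F} and Lemma~\ref{L1a} each $U(s)$ lies in $\mathcal{S}_r^{\Lambda(s)}$, giving (i); the curve starts along $\mathcal{A}_0$ and approaches the closure point $(\bar\lambda,0)=(0,0)$ as $s\to 0$, giving (ii); it is injective on the preimage of $S$ and real analytic there with $\Lambda'(s)\neq 0$, giving (iv); and, being unbounded in the appropriate sense, it must leave every set on which $\nu$ is bounded, which forces $\nu(\Lambda(s),U(s))\to\infty$, i.e. $\min_{\cunb}U(s)\to -1$ as $s\to\infty$, giving the second half of (iii). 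Here I would be slightly careful to justify why unboundedness of the curve translates precisely into $\nu\to\infty$ rather than $\Lambda\to\infty$: since $\Lambda(s)\in(0,\lambda_*]$ is bounded and, on any region where $\nu$ stays bounded, $(\Lambda,U)$ ranges in a set that is precompact in $(0,\lambda_*]\times C_{D,r}^1(\cunb)$ by the (C6)-type estimate, the only way the curve can be non-compact is through $\nu\to\infty$.

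The main obstacle, and the point deserving the most care, is the bookkeeping needed to match the abstract framework of \cite{BDT00} exactly: checking that the weakened form of (C2) (replacing it by openness of $(0,\infty)\times\mathcal{O}$, as in \cite[p.~128]{EGG10}) genuinely suffices for the conclusions we use, confirming that the implicit-function-theorem branch $\mathcal{A}_0$ is the \emph{maximal} arc-connected subset of $S$ so that $\lambda_0$ is well defined and finite (it lies in $(0,\lambda_*]$), and ensuring the compactness argument for (C6)/(C8) is uniform — i.e. that a bound on $\nu$ really yields a bound on $\lambda g(u)$ in $L_\infty$ and hence, via $A^{-1}\in\ml(C_{D,r}^{\alpha},C_{D,r}^{4+\alpha})$, precompactness in $C_{D,r}^1(\cunb)$. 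None of these steps is deep, but they must be assembled correctly; once they are in place, (i)--(iv) follow directly from \cite[Theorem~2.4]{BDT00}.
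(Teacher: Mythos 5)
Your proposal is correct and follows essentially the same route as the paper: the paper's proof consists precisely of verifying (C1)--(C8) of \cite[Section~2.1]{BDT00} in the text preceding the statement (with (C2) weakened to openness of $(0,\infty)\times\mathcal{O}$ as in \cite[p.~128]{EGG10}, Fredholmness of $F_u$ via the compact embedding $H_{D,r}^4(\unb)\hookrightarrow C_{D,r}^1(\cunb)$, and the properness conditions (C6), (C8) via Lemma~\ref{L1a} and Lemma~\ref{L0}) and then invoking \cite[Theorem~2.4]{BDT00}, whose conclusion already delivers (i)--(iv), including the blow-up of the functional $\nu$ along the curve, i.e. $\min_{\cunb}U(s)\to-1$.
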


Actually, more precise information is given in \cite[Theorem 2.4]{BDT00} about the curve 
\begin{equation}\label{A}
\mathcal{A}:=\{(\Lambda(s),U(s))\,:\, s\in (0,\infty)\}\ ,
\end{equation}
traced out by the function $(\Lambda,U)$, in particular, that it is piecewise analytic:

\begin{remark}\label{RR1}
The set $(\Lambda,U)^{-1}(\bar{S}\setminus S)\subset (0,\infty)$ consists of isolated values and locally near each point $s_0\in(\Lambda,U)^{-1}(\bar{S}\setminus S)$, there is a re-parametrization $\zeta$ of the parameter $s$ such that $(\Lambda,U)\circ \zeta$ is real analytic with derivative vanishing possibly only at 0.
\end{remark}

Before analyzing further the curve $\mathcal{A}$ and in particular showing that it ``globally'' extends $\mathcal{A}_0$ defined in \eqref{A0} (note that at this point, the curves $\mathcal{A}_0$ and $\mathcal{A}$ could still coincide), we first derive general properties of  solutions to \eqref{hyper1s}-\eqref{hyper2s} in the next subsection .

\subsection{Properties of stationary solutions}\label{sec22}

We first recall the following sign-preserving property of the operator $B\Delta^2 - T\Delta$ with homogeneous clamped boundary conditions in $\unb$ with radial symmetry established in \cite{LWxx}.

\begin{lemma}\label{leb1} 
Consider $f\in C_r(\bar{\mathbb{B}}_1)$ and $w\in C_r^4(\unb)\cap C_r^2(\cunb)$ such that $w$ is a classical solution to
\begin{eqnarray*}
B \Delta^2 w - T \Delta w & = & f \;\;\text{ in }\;\; \unb\,, \\
w = \partial_\nu w & = & 0 \;\;\text{ on }\;\; \uns\,.
\end{eqnarray*}
Then, if $f\le 0$ in $\unb$,
\begin{equation}
\text{ either }\;\; w \equiv 0 \;\;\text{ or }\;\; w < 0 \;\;\text{ in }\;\; \unb\ . \label{b3}
\end{equation}
Furthermore, 
\begin{equation}
\min_{\cunb} w = w(0)\ , \label{b4}
\end{equation}
and there is $r_0\in (0,1)$ such that
\begin{equation}
\Delta w < 0 \;\;\text{ in }\;\; \cunb\setminus\bar{\mathbb{B}}_{r_0} \;\;\text{ and }\;\; \Delta w > 0 \;\;\text{ in }\;\; \mathbb{B}_{r_0}\setminus\{0\}\ .  \label{b5}
\end{equation}
Finally, the profile $\mathfrak{w}$ of $w$ defined by $\mathfrak{w}(\vert x\vert) = w(x)$ for $x\in\cunb$ is a non-decreasing function on $[0,1]$. 
\end{lemma}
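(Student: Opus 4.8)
The plan is to derive all four conclusions from two ingredients: the positivity‑preserving property of $A=B\Delta^2-T\Delta$ under clamped boundary conditions on radial functions established in \cite{LWxx} (together with the fine boundary behaviour of the corresponding Green function), and the classical strong maximum principle for the second‑order operator $-B\Delta+T$ with $T\ge 0$.

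First I would dispose of \eqref{b3}. Writing $w=A^{-1}f$ and invoking the positivity of the Green function of $A$ from \cite{LWxx} yields $w\le 0$ in $\unb$, while the invertibility of $A$ (Lemma~\ref{L0}) gives $w\equiv 0$ when $f\equiv 0$; the strong form of the positivity principle upgrades $w\le 0$, $w\not\equiv 0$ to $w<0$ in $\unb$, and a Hopf‑type boundary estimate for the clamped operator (again from the boundary behaviour of the Green function in \cite{LWxx}) gives the strict bound $\mathfrak{w}''(1)<0$, i.e. $\Delta w<0$ on $\uns$. From here on I assume $f\not\equiv 0$.

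The core of the argument is \eqref{b5}. Set $\psi:=\Delta w$, with radial profile $\Psi$. Since $\Delta\psi=\Delta^2 w$, we have $(B\Delta-T)\psi=f\le 0$ in $\unb$, so $\psi$ is a supersolution of a second‑order elliptic operator whose zeroth‑order coefficient is $-T\le 0$. By the strong maximum principle, $\psi$ cannot attain a nonpositive value at an interior local minimum: otherwise $\psi$ would be constant, and then the equation together with $f\le 0$ (and Lemma~\ref{L0} when $T=0$) would force $w\equiv 0$, which is excluded. On the other hand, the divergence theorem and $\partial_\nu w=0$ on $\uns$ give $\int_{\unb}\psi\,\rd x=0$, hence $\int_0^1 r^{d-1}\Psi(r)\,\rd r=0$, and since $\psi\not\equiv 0$, $\Psi$ changes sign. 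Combining these facts with $\Psi'(0)=0$ (smoothness at the origin) and $\Psi(1)=\mathfrak{w}''(1)<0$: if $\Psi(0)$ were negative, then on the interval up to its first zero $\Psi$ would attain a negative minimum at an interior point of $\unb$, which is forbidden; hence $\Psi(0)\ge 0$ and $\Psi>0$ on a right‑neighbourhood of $0$. Setting $r_0:=\inf\{r\in(0,1):\Psi(r)=0\}\in(0,1)$, one has $\Psi>0$ on $(0,r_0)$; moreover $\Psi$ must turn negative just past $r_0$ (else $r_0$ is a forbidden minimum of value $0$) and can never return to $0$ on $(r_0,1)$ (a return would trap a negative interior minimum in between), so $\Psi<0$ on $(r_0,1]$ as well. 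Together with Step~1 this is exactly \eqref{b5}.

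Finally, \eqref{b4} and the monotonicity of $\mathfrak{w}$ follow by one quadrature. From $\Delta w=r^{1-d}(r^{d-1}\mathfrak{w}')'$ and $\lim_{r\to 0}r^{d-1}\mathfrak{w}'(r)=0$ one obtains $r^{d-1}\mathfrak{w}'(r)=\int_0^r s^{d-1}\Psi(s)\,\rd s=:G(r)$. The sign pattern of $\Psi$ from \eqref{b5} shows that $G$ increases on $[0,r_0]$ and decreases on $[r_0,1]$, with $G(0)=G(1)=0$; hence $G\ge 0$ on $[0,1]$, so $\mathfrak{w}'\ge 0$ on $(0,1]$, i.e. $\mathfrak{w}$ is non‑decreasing on $[0,1]$. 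In particular $\min_{\cunb}w=\mathfrak{w}(0)=w(0)$, which is \eqref{b4}. The main obstacle is the sign analysis in \eqref{b5}: one needs the precise fact that $\Delta w$ changes sign exactly once and with the correct orientation, and the mechanism is the interplay of the second‑order maximum principle for $\psi=\Delta w$ (no nonpositive interior local minimum), the mean‑zero constraint $\int_{\unb}\psi\,\rd x=0$ forced by the clamped condition, and control of the two endpoints — smoothness of $\Psi$ at the origin and the strict negativity of $\Delta w$ on $\uns$.
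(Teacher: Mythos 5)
Your overall route is sound, and it is genuinely different from the paper's in the middle step. The paper proves \eqref{b3} by citing \cite[Theorem~1.4]{LWxx} and obtains \eqref{b5} by simply remarking that the proof of that theorem in \cite{LWxx} yields it; the only argument actually carried out there is the final quadrature (the sign of $\partial_r\left(r^{d-1}\partial_r\mathfrak{w}\right)$ on $(0,r_0)$ and $(r_0,1)$ together with $\partial_r\mathfrak{w}(0)=\partial_r\mathfrak{w}(1)=0$ gives $\partial_r\mathfrak{w}\ge 0$), and your last paragraph reproduces exactly this. What you add is a self-contained derivation of \eqref{b5} from \eqref{b3}: viewing $\psi:=\Delta w$ as a supersolution of the second-order operator $B\Delta-T$ (zeroth-order coefficient $-T\le 0$), exploiting the mean-zero identity $\int_{\unb}\Delta w\,\rd x=0$ forced by $\partial_\nu w=0$, and controlling the endpoints. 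This is a legitimate and arguably more transparent mechanism, since it does not lean on the internals of the proof in \cite{LWxx}.

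Two places need tightening. First, the strict inequality $\Delta w<0$ on $\uns$ should not be attributed to a ``Hopf-type boundary estimate'' read off from the Green function in \cite{LWxx}; no such lower bound is invoked in this paper and it is unnecessary: since $\psi$ changes sign, its global minimum over $\cunb$ is negative, and if it were attained at an interior point the strong minimum principle (with $c=-T\le 0$) would force $\psi$ to be constant, contradicting the sign change; hence the minimum is attained only on $\uns$, which is exactly $\Delta w<0$ there. Second, the repeated appeal to ``no nonpositive interior local minimum, otherwise $\psi$ would be constant'' is not the standard strong maximum principle: constancy is only obtained on a subdomain in which the offending point realizes the global minimum. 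You must therefore run the argument on suitable balls $\mathbb{B}_\rho$ or annuli and extract the contradiction from the known value of the profile on the boundary of that subdomain. In particular, minima of value exactly $0$ (the passage ``$\Psi(0)\ge 0$ and $\Psi>0$ on a right-neighbourhood of $0$'', and the exclusion of interior zeros of the profile of $\Delta w$ inside its positivity region) at first only yield $\psi\equiv 0$ on an open ball or annulus, and one then needs the further observation that this constancy either overlaps a region where $\Delta w$ is already known to be strictly positive or propagates until it contradicts the mean-zero identity and the strict negativity near $\uns$. These cases are fillable with the tools you already use, but as written they are skipped, and they are precisely where the single sign change of $\Delta w$ with the correct orientation is won.
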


\begin{proof}
The first statement \eqref{b3} of Lemma~\ref{leb1} readily follows from \cite[Theorem~1.4]{LWxx}. Furthermore, the proof of that result reveals that \eqref{b5} is true. We next deduce from \eqref{b5} that $\partial_r \left( r^{d-1} \partial_r \mathfrak{w}(r) \right)<0$ for $r\in (r_0,1]$ and $\partial_r \left( r^{d-1} \partial_r \mathfrak{w}(r) \right)>0$ for $r\in (0,r_0)$. Since $\partial_r \mathfrak{w}(0)=\partial_r \mathfrak{w}(1)=0$ due to the radial symmetry of $w$, its regularity, and its boundary conditions, we conclude that $\partial_r \mathfrak{w}(r)\ge 0$ for $r\in [0,1]$. Then $\mathfrak{w}$ is a non-decreasing function in $[0,1]$ and attains its minimum at $r=0$.
\end{proof}

\begin{lemma} \label{leb3} 
Define the scalar product $\langle \cdot ,\cdot\rangle$ on $H_{D,r}^2(\unb)$ by
$$
\langle v ,w\rangle :=\int_{\unb} \left[ B \Delta v(x) \Delta w(x) + T \nabla v(x)\cdot \nabla w(x) \right]\, \mathrm{d} x\ ,\quad v, w\in H_{D,r}^2(\unb)\ .
$$
Let $\mathcal{K}:=\left\{ v\in H_{D,r}^2(\unb)\,:\, v\ge 0 \right\}$ be the positive cone of $H_{D,r}^2(\unb)$ and define its polar cone by $$
\mathcal{K}^\circ:=\{w\in H_{D,r}^2(\unb)\,:\, \langle v ,w\rangle \le 0\ \text{ for all } v\in \mathcal{K}\}\ .
$$
Then, given $v\in H_{D,r}^2(\unb)$, there is a unique couple $(v_1,v_2)\in\mathcal{K}\times \mathcal{K}^\circ$ such that \mbox{$\langle v_1, v_2 \rangle = 0$} and $v=v_1+v_2$. In addition, $v_2\le 0$ a.e. in $\unb$.
\end{lemma}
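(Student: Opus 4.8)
\textbf{Proof proposal for Lemma~\ref{leb3}.}

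The plan is to recognize $\langle\cdot,\cdot\rangle$ as a genuine inner product on $H_{D,r}^2(\unb)$ generating a Hilbert space structure, and then invoke the classical Moreau decomposition for a closed convex cone. First I would check that $\langle\cdot,\cdot\rangle$ is indeed a scalar product whose induced norm is equivalent to the usual $H^2$-norm on $H_{D,r}^2(\unb)$: bilinearity and symmetry are immediate, positive definiteness follows because $\langle v,v\rangle = \|v\|^2$ with the right-hand side controlling $\|\Delta v\|_2^2$, and by elliptic regularity (or simply the fact that $A=B\Delta^2-T\Delta$ with clamped boundary conditions is an isomorphism, Lemma~\ref{L0}) the $\Delta$-seminorm is equivalent to the full $H^2$-norm on the space of functions vanishing to first order on the boundary; the $T$-term only adds a nonnegative contribution. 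Thus $(H_{D,r}^2(\unb),\langle\cdot,\cdot\rangle)$ is a Hilbert space.

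Next I would note that $\mathcal{K}$ is a nonempty closed convex cone in this Hilbert space: closedness follows since $H^2(\unb)\hookrightarrow C(\cunb)$ for $d\le 2$ (or more directly from closedness of the positive cone under $L_2$-limits combined with norm convergence), and it is plainly a convex cone. Moreau's decomposition theorem then applies verbatim: every $v\in H_{D,r}^2(\unb)$ decomposes uniquely as $v=v_1+v_2$ with $v_1\in\mathcal{K}$, $v_2\in\mathcal{K}^\circ$, and $\langle v_1,v_2\rangle=0$; concretely $v_1$ is the $\langle\cdot,\cdot\rangle$-orthogonal projection of $v$ onto $\mathcal{K}$ and $v_2=v-v_1$. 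This gives all the assertions except the sign claim on $v_2$.

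For the final statement that $v_2\le 0$ a.e., I would argue as follows. Since $v_2\in\mathcal{K}^\circ$, we have $\langle \varphi, v_2\rangle\le 0$ for every $\varphi\in\mathcal{K}$. The point is to feed in test functions $\varphi$ that witness positivity of $v_2$. Consider the positive part $v_2^+:=\max\{v_2,0\}$. A delicate point is whether $v_2^+\in H_{D,r}^2(\unb)$, which is \emph{not} automatic since truncation need not preserve $H^2$-regularity. Instead, the cleaner route is to test against smooth nonnegative functions: given any $\varphi\in C_c^\infty(\unb)$ with $\varphi\ge 0$ — which lies in $\mathcal{K}$ after noting these are dense enough — integration by parts (legitimate because $v_2\in H^2$ and $\varphi$ has compact support) gives $\langle\varphi,v_2\rangle=\int_{\unb}(B\Delta^2 v_2 - T\Delta v_2)\varphi\,\rd x$ in the distributional sense, but this still involves a fourth-order operator and does not immediately sign $v_2$. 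The \emph{correct} mechanism, and I believe the intended one, is to use the explicit relationship with $v_1$: one has $v_1=v-v_2$ and $\langle v_1,v_2\rangle=0$, and moreover $v_1$ minimizes $w\mapsto\|v-w\|$ over $\mathcal{K}$, so for any $w\in\mathcal{K}$, $\langle v_1-v, v_1-w\rangle\le 0$, i.e. $\langle v_2, w-v_1\rangle\le 0$. Taking $w=v_1+\psi$ for $\psi\in\mathcal{K}$ recovers $\langle v_2,\psi\rangle\le 0$. Now I would use the \emph{dual} characterization: $v_2\in\mathcal{K}^\circ$ means $v_2=A^{-1}h$ for some $h$ which, paired against $\mathcal{K}$, is a nonpositive distribution — more precisely, by the Riesz representation applied to $\langle\cdot,v_2\rangle$, there is a nonpositive Radon measure $\mu$ on $\unb$ with $\langle\varphi,v_2\rangle=\int_{\unb}\varphi\,\rd\mu$ for all $\varphi\in H_{D,r}^2(\unb)$; then $v_2$ solves $B\Delta^2 v_2 - T\Delta v_2 = \mu\le 0$ with clamped boundary conditions, and Lemma~\ref{leb1} (sign-preserving property, applied in its distributional/measure form via approximation) yields $v_2\le 0$ in $\unb$.

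The main obstacle is precisely this last step: making rigorous that membership in the polar cone $\mathcal{K}^\circ$ forces $v_2$ to solve $Av_2\le 0$ in an appropriate weak sense, and then that Lemma~\ref{leb1} — stated for classical solutions with continuous right-hand side — can be applied to the possibly merely measure-valued right-hand side. I would handle this by a regularization: approximate the measure $\mu$ by smooth nonpositive functions $\mu_n\to\mu$, let $w_n:=A^{-1}\mu_n\in C_{D,r}^4(\cunb)$, apply Lemma~\ref{leb1} to get $w_n\le 0$, and pass to the limit using the continuity of $A^{-1}$ from, say, $H^{-2}_{D,r}(\unb)$ to $H_{D,r}^2(\unb)\hookrightarrow C(\cunb)$ to conclude $w_n\to v_2$ uniformly and hence $v_2\le 0$. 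Everything else — the Hilbert space structure, Moreau's theorem, uniqueness and orthogonality — is standard functional analysis and should be dispatched quickly.
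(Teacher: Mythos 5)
Your Hilbert-space setup, the closedness of $\mathcal{K}$, and the appeal to Moreau's theorem are fine and coincide with the paper's treatment of the decomposition (the paper simply cites Moreau). The problem is your argument for $v_2\le 0$, which as written rests on two unproved claims. First, you assert that $\varphi\mapsto\langle\varphi,v_2\rangle$ is represented by a nonpositive Radon measure $\mu$ acting on \emph{all} of $H_{D,r}^2(\unb)$; but the polar-cone condition only gives nonpositivity against nonnegative \emph{radially symmetric} elements (so even the statement that $Av_2$ is a nonpositive distribution needs a symmetrization remark), and the identification of the pairing with $\int\varphi\,\rd\mu$ beyond compactly supported test functions is not justified, since $\mu$ is a priori only locally finite and may have infinite mass near $\uns$. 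Second, and more seriously, the regularization step ``approximate $\mu$ by smooth nonpositive $\mu_n\to\mu$ in $H^{-2}$'' is exactly where the difficulty sits and is only asserted: mollifying the zero extension of $v_2$ creates boundary layers of uncontrolled sign, and truncation plus mollification of $\mu$ needs Hardy-type estimates near $\uns$ to converge in the dual of $H_{D,r}^2(\unb)$. So the sign step has a genuine gap.

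The gap closes immediately if you run the duality the other way, which is what the paper's reference for this point (the proof of \cite[Proposition~4.5]{LWxx}) does and which avoids measures and approximation altogether: take any nonnegative, radially symmetric $\varphi\in C^\infty(\cunb)$ with compact support in $\unb$ and set $\psi:=A^{-1}\varphi$, which belongs to $C_{D,r}^{4+\alpha}(\cunb)$ by Lemma~\ref{L0}; Lemma~\ref{leb1}, applied to $-\psi$ whose right-hand side is $-\varphi\le 0$, gives $\psi\ge 0$, hence $\psi\in\mathcal{K}$. Two integrations by parts, legitimate because $\psi\in H_{D,r}^4(\unb)$ and $v_2\in H_{D,r}^2(\unb)$ with all boundary terms vanishing, give $\langle\psi,v_2\rangle=\int_{\unb}(B\Delta^2\psi-T\Delta\psi)\,v_2\,\rd x=\int_{\unb}\varphi\,v_2\,\rd x\le 0$ by the polar-cone property. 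Since this holds for every such $\varphi$ and $v_2$ is radially symmetric and continuous on $\cunb$, it follows that $v_2\le 0$ a.e. Replace your measure-theoretic detour by this short computation and the rest of your proof stands.
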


\begin{proof}
The fact that any $v\in H_{D,r}^2(\unb)$ can be written in a unique way  as a sum $v=v_1+v_2$ with $\langle v_1,v_2\rangle =0$ and $(v_1,v_2)\in \mathcal{K}\times\mathcal{K}^\circ$ is a well-known result due to Moreau~\cite{Mo62}. The non-negativity property of $v_2$ actually follows from the sign-preserving property stated in Lemma~\ref{leb1} and can be proved as \cite[Proposition~4.5]{LWxx}, where only the one-dimensional case was handled.
\end{proof}


As in \cite{EGG10} the linear stability of stationary solutions is an important tool in the detailed analysis to follow. 
For $u\in \mathcal{S}_r^\lambda$, it is measured by 
\begin{equation}
\mu_1(u) := \inf{\left\{ \int_{\unb} \left( B |\Delta v|^2 + T |\nabla v|^2 + \lambda g'(u) v^2 \right)\,\rd x \ :\ v\in H_{D,r}^2(\unb)\ , \;\; \|v\|_2=1 \right\}}\ , \label{b6}
\end{equation}
which turns out to be  a simple eigenvalue of the linearization of \eqref{stat1} when non-negative as shown in the following lemma.

\begin{lemma}\label{leb3c}
Consider $\lambda\in [0,\lambda_*]$ and $u\in\mathcal{S}_r^\lambda$ such that $\mu_1(u)\ge 0$. Then the following hold:
\begin{itemize}
\item[(i)] $\mu_1(u)$ is a simple eigenvalue of the operator $A + \lambda g'(u)\in \ml(H_{D,r}^4(\unb),L_{2,r}(\unb)) $ with a positive eigenfunction in $C_{D,r}^4(\cunb)$;
\item[(ii)] $\mu_1(u)>0$ if and only if $F_u(\lambda,u)=1+\lambda A^{-1}g'(u)\in\mathcal{L}(C_{D,r}^1(\cunb))$ is boundedly invertible.
\end{itemize}
\end{lemma}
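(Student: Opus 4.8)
The plan is to realize $\mu_1(u)$ as the bottom of the spectrum of the self-adjoint operator associated with the quadratic form in \eqref{b6} and then transfer the analysis to the compact resolvent setting. First I would introduce the symmetric bilinear form
$$
\mathfrak{a}(v,w):=\int_{\unb}\big(B\,\Delta v\,\Delta w+T\,\nabla v\cdot\nabla w+\lambda g'(u)\,vw\big)\,\rd x\ ,\qquad v,w\in H_{D,r}^2(\unb)\ .
$$
Since $u\in\mathcal S_r^\lambda$ satisfies $-1<u\le 0$ in $\cunb$ by Lemma~\ref{L1a}, the potential $\lambda g'(u)=-2\lambda(1+u)^{-3}$ is bounded and continuous on $\cunb$; hence $\mathfrak a$ is bounded on $H_{D,r}^2(\unb)$ and, together with the coercivity of $\langle\cdot,\cdot\rangle$ from Lemma~\ref{leb3}, it is bounded from below and closed. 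The associated self-adjoint operator in $L_{2,r}(\unb)$ is exactly $A+\lambda g'(u)$ with domain $H_{D,r}^4(\unb)$ (using the regularizing properties of $A^{-1}$ from Lemma~\ref{L0} to identify the domain), and it has compact resolvent because $H_{D,r}^4(\unb)\hookrightarrow L_{2,r}(\unb)$ compactly. Therefore its spectrum is a sequence of eigenvalues, the smallest of which is the infimum in \eqref{b6}, namely $\mu_1(u)$, and a minimizing sequence in \eqref{b6} yields (after extracting a weakly convergent subsequence and using compactness) a genuine eigenfunction $\varphi\in H_{D,r}^2(\unb)$ for $\mu_1(u)$; elliptic regularity via $A^{-1}$ upgrades $\varphi$ to $C_{D,r}^4(\cunb)$.

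For (i) it then remains to show $\mu_1(u)$ is simple with a positive eigenfunction. Here is the point where the sign-preserving structure of Lemma~\ref{leb1}–Lemma~\ref{leb3} is essential, since no pointwise maximum principle is available directly. I would argue: if $\varphi$ is any eigenfunction for $\mu_1(u)$, then so is $|\varphi|$-type competitor obtained through the Moreau decomposition of Lemma~\ref{leb3}. Concretely, writing $\varphi=\varphi_1+\varphi_2$ with $\varphi_1\in\mathcal K$, $\varphi_2\in\mathcal K^\circ$, $\langle\varphi_1,\varphi_2\rangle=0$, and $\varphi_2\le 0$ a.e., one checks that $\psi:=\varphi_1-\varphi_2\ge 0$ has the same $H_{D,r}^2$-energy $\langle\psi,\psi\rangle=\langle\varphi,\varphi\rangle$ and the same $L_2$-norm, while $\int \lambda g'(u)\psi^2 = \int\lambda g'(u)\varphi^2$ because $\psi^2=\varphi^2$ a.e.; hence $\psi$ is also a minimizer, thus a non-negative eigenfunction. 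Feeding $A\psi=(\mu_1(u)-\lambda g'(u))\psi$ into the representation $\psi=A^{-1}[(\mu_1(u)-\lambda g'(u))\psi]$ and invoking the strong positivity in \eqref{b3} of Lemma~\ref{leb1} (the right-hand side is $\le 0$ once $\mu_1(u)\le \lambda\,\|g'(u)\|_\infty^{-1}\cdots$ — more carefully, one uses that $A^{-1}$ maps nonpositive data to negative functions and an iteration/shift $A+M$ with $M$ large to make $(\mu_1(u)+M-\lambda g'(u))\psi\ge 0$) forces $\psi>0$ in $\unb$. Simplicity then follows by the standard Krein–Rutman argument: if $\mu_1(u)$ had two linearly independent eigenfunctions, a suitable linear combination would be a sign-changing eigenfunction, contradicting that every eigenfunction of $\mu_1(u)$ can be replaced by a strictly positive one; alternatively one quotes the Krein–Rutman theorem applied to the positive compact operator $(A+M)^{-1}$ on the cone, whose spectral radius corresponds to $\mu_1(u)+M$.

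For (ii), the equivalence is now a soft consequence. By part~(i) and the Fredholm alternative for $F_u(\lambda,u)=1+\lambda A^{-1}g'(u)$ (which is Fredholm of index $0$, as noted in the discussion preceding Theorem~\ref{T19}), bounded invertibility of $F_u(\lambda,u)$ in $C_{D,r}^1(\cunb)$ is equivalent to $0$ not being an eigenvalue of $A+\lambda g'(u)$, i.e. to $\ker(A+\lambda g'(u))=\{0\}$. If $\mu_1(u)>0$, then $0<\mu_1(u)\le\sigma(A+\lambda g'(u))$, so $0\notin\sigma(A+\lambda g'(u))$ and $F_u(\lambda,u)$ is invertible. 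Conversely, if $\mu_1(u)=0$, then by (i) it is an eigenvalue, so $A+\lambda g'(u)$ is not injective, hence $F_u(\lambda,u)$ is not invertible. One minor technical point I would address is the passage between the $L_{2,r}$-realization and the $C_{D,r}^1$-realization of the kernel: any $\varphi\in C_{D,r}^1(\cunb)$ with $F_u(\lambda,u)\varphi=0$ satisfies $\varphi=-\lambda A^{-1}g'(u)\varphi\in C_{D,r}^4(\cunb)\subset H_{D,r}^4(\unb)$ by Lemma~\ref{L0}, so the two kernels coincide.

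The main obstacle I anticipate is the positivity-and-simplicity step in (i): because the clamped fourth-order operator has no pointwise maximum principle in general, I cannot use truncation $v\mapsto v_+$ in $H^2$ (it is not even in $H^2$) and must instead route the argument through the Moreau/polar-cone decomposition of Lemma~\ref{leb3}, combined carefully with the strong positivity statement \eqref{b3} of Boggio-type obtained in \cite{LWxx}, possibly after shifting by a large multiple of the identity to guarantee the relevant right-hand side has the correct sign. Everything else — closedness of the form, compact resolvent, identification of the domain, and the Fredholm bookkeeping in (ii) — is routine given Lemma~\ref{L0}.
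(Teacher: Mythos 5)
Your setup for (i) (the closed form, the self-adjoint realization of $A+\lambda g'(u)$ with compact resolvent, the existence and regularity of a minimizer of \eqref{b6}) and your treatment of (ii) (Fredholm of index $0$, identification of the $C_{D,r}^1$- and $L_{2,r}$-kernels by bootstrapping with $A^{-1}$) are sound and essentially what the paper does; since your realization is self-adjoint, algebraic simplicity also comes for free once geometric simplicity is known. The genuine gap is at the heart of the positivity step: you assert that for the Moreau decomposition $\varphi=\varphi_1+\varphi_2$ of an eigenfunction the reflected function $\psi=\varphi_1-\varphi_2$ has the same $L_2$-norm and satisfies $\psi^2=\varphi^2$ a.e. Neither is automatic. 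Moreau's decomposition in Lemma~\ref{leb3} is orthogonal with respect to the $H^2$-form $\langle\cdot,\cdot\rangle$, not with respect to the $L_2$ pairing and certainly not pointwise; the identity $\psi^2=\varphi^2$ a.e.\ is equivalent to $\varphi_1\varphi_2=0$ a.e., and this disjointness of supports is precisely the delicate fact that has to be \emph{proved}, not assumed. In the paper it is obtained by testing the variational characterization \eqref{b6} with $\varphi_1-\varphi_2$ and comparing with the Euler--Lagrange equation \eqref{b200} for $\varphi=\varphi_1+\varphi_2$: using $\langle\varphi_1,\varphi_2\rangle=0$ one arrives at $0\le -4\lambda\int_{\unb} g'(u)\varphi_1\varphi_2\,\rd x+4\mu_1(u)\int_{\unb}\varphi_1\varphi_2\,\rd x$, and since $g'<0$, $\mu_1(u)\ge 0$, $\varphi_1\ge 0\ge\varphi_2$, both terms are non-positive, forcing $\varphi_1\varphi_2=0$ a.e. Without this step your claim that $\psi$ is again a minimizer does not get off the ground.

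Two further points. First, even granting $\varphi_1\varphi_2=0$ and hence a non-negative eigenfunction $\psi$, which Lemma~\ref{leb1} then makes strictly positive because $A\psi=(\mu_1(u)-\lambda g'(u))\psi\ge 0$ directly (no shift is needed, since $\mu_1(u)\ge 0$ and $g'<0$; your fallback of shifting by a large multiple of the identity is not harmless, as positivity preservation of $(A+M)^{-1}$ for the clamped problem is known to fail for large $M$, so the Krein--Rutman route through $(A+M)^{-1}$ cannot be taken for granted), your simplicity argument is not closed: ``every eigenfunction can be replaced by a strictly positive one'' does not contradict the existence of a sign-changing eigenfunction; what is needed is that every eigenfunction itself has one sign. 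This is recovered by observing that $\varphi_1=(\varphi+\psi)/2$ and $\varphi_2=(\varphi-\psi)/2$ are then eigenfunctions of definite sign, so that Lemma~\ref{leb1} makes each of them either identically zero or strictly signed in $\unb$, and $\varphi_1\varphi_2=0$ a.e.\ then forces one of them to vanish -- this is in essence the paper's detour through the auxiliary problems \eqref{b202} and the uniqueness of Moreau's decomposition. Once ``no sign change'' is available for all eigenfunctions, your two-positive-eigenfunctions argument for one-dimensionality of the kernel is fine and is the one used in the paper.
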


\begin{proof}
(i) A classical compactness argument along with the weak lower semicontinuity of the scalar product $\langle \cdot , \cdot \rangle$ in $H_{D,r}^2(\unb)$ defined in Lemma~\ref{leb3} guarantee the existence of a minimizer $\phi$ to \eqref{b6} in $H_{D,r}^2(\unb)$ satisfying $\|\phi\|_2=1$. Then $\phi\in H_{D,r}^4(\unb)$  is a solution to the corresponding Euler-Lagrange equation
\begin{equation}
B \Delta^2 \phi - T \Delta \phi + \left( \lambda g'(u) - \mu_1(u) \right) \phi = 0 \;\;\text{ in }\;\; \unb\ , \quad \phi=\partial_\nu \phi = 0 \;\;\text{ on }\;\; \uns\ . \label{b200}
\end{equation}
Now, let $\phi\in H_{D,r}^4(\unb)$ be any solution to the boundary value problem \eqref{b200}. According to Lemma~\ref{leb3}, there is a unique couple $(\phi_1,\phi_2)\in \mathcal{K}\times\mathcal{K}^\circ$ such that $\phi=\phi_1+\phi_2$, $\langle \phi_1, \phi_2 \rangle = 0$, and $\phi_2\le 0$ a.e. in $\unb$. We deduce from the definition \eqref{b6} of $\mu_1(u)$, the orthogonality properties of $(\phi_1,\phi_2)$, and \eqref{b200} that
\begin{align*}
\mu_1(u) \|\phi_1-\phi_2\|_2^2 & \le \langle \phi_1 - \phi_2 , \phi_1 - \phi_2 \rangle + \lambda \int_{\unb} g'(u) (\phi_1-\phi_2)^2\ \mathrm{d}x \\
& \le \langle \phi_1 + \phi_2 , \phi_1 + \phi_2 \rangle + \lambda \int_{\unb} g'(u) (\phi_1-\phi_2)^2\ \mathrm{d}x \\
& \le \lambda \int_{\unb} g'(u) \left[ (\phi_1-\phi_2)^2 - (\phi_1+\phi_2)^2 \right]\ \mathrm{d}x + \mu_1(u) \|\phi_1+\phi_2\|_2^2\ ,
\end{align*}
whence
$$
0 \le - 4\lambda \int_{\unb} g'(u) \phi_1 \phi_2\ \mathrm{d}x + 4\mu_1(u) \int_{\unb} \phi_1 \phi_2\ \mathrm{d}x\ .
$$
Both terms of the right-hand side of the above inequality being non-positive, we infer from the negativity of $g'$ that
\begin{equation}
\phi_1 \phi_2 = 0 \;\;\text{ a.e. in }\;\; \unb\,. \label{b201}
\end{equation}

Now, for $i=1,2$, it follows from the embedding of $H^2(\unb)$ in $C^\alpha(\cunb)$ for $\alpha\in (0,1)$ (recall that $d\in \{1,2\}$) that $\phi_i\in C_r^\alpha(\cunb)$ and, according to \cite[Theorem~2.19]{GGS10}, the boundary value problem 
\begin{equation}
B \Delta^2 \psi_i - T \Delta\psi_i = \left[ \mu_1(u) - \lambda g'(u) \right] \phi_i \;\;\text{ in }\;\; \unb\ , \qquad \psi_i = \partial_\nu \psi_i = 0 \;\;\text{ on }\;\; \uns\ , \label{b202}
\end{equation}
has a unique classical radially symmetric solution $\psi_i\in C_r^{4+\alpha}(\cunb)$. Since $\mu_1(u) - \lambda g'(u)>0$, $\phi_1\ge 0$ and $\phi_2\le 0$ in $\unb$, it follows from Lemma~\ref{leb1} that $\psi_1\ge 0 \ge \psi_2$ in $\unb$ with $\psi_1>0$ in $\unb$ if $\phi_1\not\equiv 0$ and $\psi_2<0$ in $\unb$ if $\phi_2\not\equiv 0$. In addition, due to \eqref{b200} and \eqref{b202}, 
$$
B \Delta^2 (\phi - \psi_1 - \psi_2 ) - T \Delta (\phi-\psi_1-\psi_2) = \left[ \mu_1(u) - \lambda g'(u) \right] (\phi - \phi_1 -\phi_2) = 0 \;\;\text{ in }\;\; \unb
$$
with $\phi-\psi_1-\psi_2\in H^4_{D,r}(\unb)$, whence $\phi=\psi_1+\psi_2$. Furthermore, $\psi_1$ clearly belongs to $\mathcal{K}$ while, for any $v\in\mathcal{K}$, we infer from \eqref{b202} that
$$
\langle \psi_2 , v \rangle = \int_{\unb} \left( \mu_1(u) - \lambda g'(u) \right) \phi_2 v\ \mathrm{d}x\le 0\ ,
$$
so that $\psi_2\in\mathcal{K}^\circ$. The uniqueness of Moreau's decomposition then warrants that $\psi_i=\phi_i$ for $i=1,2$. Therefore, if $\phi_1\not\equiv 0$ and $\phi_2\not\equiv 0$, we deduce from the above analysis that $\psi_1 \psi_2 < 0$ a.e. in $\unb$ and $\psi_1 \psi_2 = \phi_1 \phi_2 = 0$ a.e. in $\unb$, and a contradiction. Therefore, either $\phi_1\equiv 0$ or $\phi_2\equiv 0$, and we have shown that $\phi$ does not change sign in $\unb$. 

Consequently, any element of the kernel of the operator $A + \lambda g'(u)-\mu_1(u)$ in $H^4_{D,r}(\unb)$ does not change sign, which implies that the kernel's dimension is one by a classical argument. Indeed, assume for contradiction that there are two linearly independent
positive functions $\phi$ and $\psi$ in the kernel. Then $\phi-\alpha\psi$ with suitable $\alpha>0$ is a
sign-changing function in the kernel, which is impossible. Therefore, the kernel of $A + \lambda g'(u)$ is spanned by a positive function $\phi\in C_{D,r}^4(\cunb)$, the additional regularity stemming from Lemma~\ref{L0}.
 Finally, to show that $\mu_1(u)$ is a simple eigenvalue of $A + \lambda g'(u)$, consider $\Phi\in H_{D,r}^4(\unb)$ such that $A\Phi\in H_{D,r}^4(\unb)$ and $(A + \lambda g'(u)-\mu_1(u))^2\Phi =0$. Then, $(A + \lambda g'(u)-\mu_1(u))\Phi=\alpha \phi$ for some $\alpha\in\RR$. Multiplying this identity by $\phi$ and integrating over $\unb$ gives $\alpha\|\phi\|_2^2=0$, thus $\alpha=0$.
This yields assertion~(i).

(ii) Assume that  $F_u(\lambda,u)=1+\lambda A^{-1}g'(u)\in\mathcal{L}(C_{D,r}^1(\cunb))$ is not boundedly invertible. Then $-1$ is an eigenvalue of the compact operator $\lambda A^{-1}g'(u)\in\mathcal{L}(C_{D,r}^1(\cunb))$. Hence there is $\phi\in C_{D,r}^1(\cunb)$ with $\phi+\lambda A^{-1}g'(u)\phi=0$. Alternatively, $A\phi=-\lambda g'(u)\phi$ so that $\phi\in H_{D,r}^4(\unb)$ by Lemma~\ref{L0} and 
$$
\mu_1(u)\|\phi\|_2^2\le\langle \phi,\phi\rangle +\lambda\int_{\unb} g'(u)\phi^2\,\rd x=0\ ,
$$
which implies $\mu_1(u)\le 0$. Conversely, if $\mu_1(u)= 0$, then, arguing as in the proof of Lemma~\ref{leb3c}, we obtain a solution $\phi\in H_{D,r}^4(\unb)$ to
\begin{equation*}
B \Delta^2 \phi - T \Delta \phi + \lambda g'(u) \phi = 0 \;\;\text{ in }\;\; \unb\ , 
\end{equation*}
and thus $\phi+\lambda A^{-1}g'(u)\phi=0$.
\end{proof}

As in \cite[Chapter~11]{EGG10}, a key tool in the analysis is the following comparison lemma.

\begin{lemma}\label{leb4} 
Consider $\lambda\in (0,\lambda_*]$ and $u\in \mathcal{S}_r^\lambda$ such that $\mu_1(u)\ge 0$. 
\begin{itemize}
\item[(i)] If $v\in C_{D,r}^4(\unb)\cap C_r^2(\cunb)$ is a classical subsolution to \eqref{stat1}-\eqref{stat2} with $v>-1$ in $\cunb$, then $v\le u$ in $\unb$.
\item[(ii)] Furthermore, $v=u$ if $\mu_1(u)=0$.
\end{itemize}
\end{lemma}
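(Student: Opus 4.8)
The plan is to reduce both assertions to variational inequalities obtained by testing the subsolution inequality for $v$ and the stationary equation for $u$ against suitable non-negative functions and by exploiting the convexity of $g$ on $(-1,\infty)$ (recall that $g$ is decreasing, so $g'<0$, and is strictly convex there). Since $v\in C_{D,r}^4(\unb)\cap C_r^2(\cunb)$ vanishes together with its normal derivative on $\uns$, it belongs to $H_{D,r}^2(\unb)$, and integration by parts, justified by the regularity of $v$ and by the vanishing of the test functions together with their normal derivatives on $\uns$, shows that the subsolution property of $v$ and the equation for $u$ yield, for every $\psi\in H_{D,r}^2(\unb)$ with $\psi\ge 0$,
\[
\langle v,\psi\rangle\le-\lambda\int_{\unb}g(v)\,\psi\,\rd x\ ,\qquad \langle u,\psi\rangle=-\lambda\int_{\unb}g(u)\,\psi\,\rd x\ ,
\]
where $u\in\mathcal{S}_r^\lambda\subset C_{D,r}^4(\cunb)$ by Lemma~\ref{L1a}. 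Subtracting these two relations, setting $w:=v-u\in H_{D,r}^2(\unb)$, and using the pointwise convexity bound $g(v)-g(u)\ge g'(u)\,w$, one reaches the key inequality
\[
\langle w,\psi\rangle+\lambda\int_{\unb}g'(u)\,w\,\psi\,\rd x\le 0\qquad\text{for all }\psi\in H_{D,r}^2(\unb),\ \psi\ge 0\ .
\]
The argument then splits according to whether $\mu_1(u)>0$ or $\mu_1(u)=0$.

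\emph{Case $\mu_1(u)>0$, assertion (i).} Apply Lemma~\ref{leb3} to decompose $w=w_1+w_2$ with $w_1\in\mathcal{K}$, $w_2\in\mathcal{K}^\circ$, $\langle w_1,w_2\rangle=0$, and $w_2\le 0$ a.e.\ in $\unb$. Testing the key inequality with $\psi=w_1\ge 0$ and using $\langle w_1,w_2\rangle=0$ yield
\[
\langle w_1,w_1\rangle+\lambda\int_{\unb}g'(u)\,w_1^2\,\rd x\le-\lambda\int_{\unb}g'(u)\,w_1\,w_2\,\rd x\le 0\ ,
\]
the last inequality holding since $\lambda>0$, $g'(u)<0$, $w_1\ge 0$, and $w_2\le 0$. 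If $w_1\not\equiv 0$, inserting $w_1/\|w_1\|_2$ into the variational definition \eqref{b6} of $\mu_1(u)$ gives $\langle w_1,w_1\rangle+\lambda\int_{\unb}g'(u)\,w_1^2\,\rd x\ge\mu_1(u)\,\|w_1\|_2^2>0$, a contradiction. Hence $w_1=0$, so $w=w_2\le 0$ a.e., and $v\le u$ in $\unb$ by continuity.

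\emph{Case $\mu_1(u)=0$, assertions (i) and (ii).} By Lemma~\ref{leb3c}(i), $\mu_1(u)=0$ is a simple eigenvalue of $A+\lambda g'(u)$ with an eigenfunction $\phi\in C_{D,r}^4(\cunb)$ such that $\phi>0$ in $\unb$; in particular $\langle\phi,\psi\rangle=-\lambda\int_{\unb}g'(u)\,\phi\,\psi\,\rd x$ for all $\psi\in H_{D,r}^2(\unb)$. Subtracting the two relations for $v$ and $u$ displayed above (without invoking the convexity bound) and testing with $\psi=\phi\ge 0$, the symmetry of $\langle\cdot,\cdot\rangle$ together with the eigenfunction identity applied to $\psi=w$ give $\langle w,\phi\rangle=\langle\phi,w\rangle=-\lambda\int_{\unb}g'(u)\,\phi\,w\,\rd x$; dividing the resulting inequality by $-\lambda<0$ leads to
\[
\int_{\unb}\big[g(v)-g(u)-g'(u)(v-u)\big]\,\phi\,\rd x\le 0\ .
\]
By convexity of $g$ and since $v,u>-1$, the bracket is non-negative everywhere and, by strict convexity, vanishes at a point only where $v=u$. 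As $\phi>0$ in $\unb$, the whole integrand is non-negative, hence it vanishes a.e., which forces $v=u$ a.e.\ and therefore $v\equiv u$ in $\cunb$ by continuity. This proves (ii), and also (i) in this case since then $v\le u$.

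\emph{Anticipated main obstacle.} The delicate point is the preliminary step, namely obtaining the variational (in)equalities above, in particular the key inequality, for test functions $\psi$ that merely belong to $H_{D,r}^2(\unb)$, such as the Moreau component $w_1$: the integrations by parts have to be justified even though $v$ is a priori only $C^4$ in the open ball $\unb$. A natural route is to first establish the weak subsolution inequality for smooth radial test functions vanishing to second order on $\uns$, in which case the boundary contributions drop out because $\psi$ and $\partial_\nu\psi$ vanish there while $v$ is regular up to $\uns$, and then to extend it to all non-negative $\psi\in H_{D,r}^2(\unb)$ by density. Once this is secured, the remaining work is light; the only further care needed is the bookkeeping of Moreau's decomposition (the orthogonality $\langle w_1,w_2\rangle=0$ and the sign of $g'(u)\,w_1\,w_2$) in the case $\mu_1(u)>0$, and the joint use of the positivity of $\phi$ and the strict convexity of $g$ in the case $\mu_1(u)=0$.
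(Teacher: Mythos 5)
Your proof is correct, and it uses the same toolkit as the paper (Moreau's decomposition from Lemma~\ref{leb3}, the variational characterization \eqref{b6} of $\mu_1(u)$, the positive eigenfunction from Lemma~\ref{leb3c}, and convexity of $g$), but it organizes the argument differently. For (i) you linearize $g$ immediately and test with the Moreau component $w_1$, which forces $w_1=0$ only when $\mu_1(u)>0$; the borderline case $\mu_1(u)=0$ is then absorbed into (ii). The paper instead proves (i) uniformly for $\mu_1(u)\ge 0$ by keeping both the convexity remainder $\big(g(v)-g(u)-g'(u)(v-u)\big)w_1$ and the cross term $g'(u)w_1w_2$, concluding that each vanishes a.e.\ and hence $v=u$ on $\{w_1>0\}$ by strict convexity — so the comparison is obtained at $\mu_1(u)=0$ without ever invoking the eigenfunction. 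For (ii) your argument is in fact shorter than the paper's: you test the difference of the weak relations with the positive eigenfunction $\phi$, use the identity $\langle \phi,w\rangle=-\lambda\int_{\unb}g'(u)\phi\,w\,\rd x$ and the first-order convexity inequality, and conclude from strict convexity and $\phi>0$; the paper reaches the same conclusion through the interpolated function $f(\vartheta)$ and a second-derivative argument ($f(0)=f'(0)=0$, $f\le 0$, hence $f''(0)=0$), which is essentially the second-order version of your first-order estimate. The integration by parts for the merely $C^4(\unb)\cap C^2(\cunb)$ subsolution against $H^2_{D,r}(\unb)$ test functions, which you flag and resolve by approximation with non-negative compactly supported test functions, is carried out without comment in the paper (it multiplies the pointwise inequality by $w_1$ and integrates), so your treatment is at least as careful on this point; note also that one may use that $g(v)\in L_\infty(\unb)$ since $v>-1$ on the compact set $\cunb$, which makes the limit passage immediate.
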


\begin{proof}
(i) We proceed along the lines of the proof of \cite[Lemma~11.3.4]{EGG10}. By Lemma~\ref{leb3} there is a unique couple $(w_1,w_2)\in \mathcal{K}\times\mathcal{K}^\circ$ such that $v-u=w_1+w_2$, $\langle w_1, w_2 \rangle = 0$, and $w_2\le 0$ a.e. in $\unb$. Since 
$$
B \Delta^2 (v-u) - T \Delta (v-u) + \lambda (g(v)-g(u)) \le 0 \;\;\text{ in }\; \unb
$$
and $w_1\in\mathcal{K}$, we may multiply the above inequality by $w_1$ and integrate over $\unb$ to obtain
$$
\langle w_1 , v- u \rangle + \lambda \int_{\unb} (g(v)-g(u)) w_1\ \mathrm{d}x \le 0\ . 
$$
We next deduce from $\mu_1(u)\ge 0$ that
\begin{align*}
\langle w_1 , v- u \rangle = \langle w_1 , w_1 \rangle & \ge - \lambda \int_{\unb} g'(u) w_1^2\ \mathrm{d}x =  - \lambda \int_{\unb} g'(u) w_1 (v-u-w_2)\ \mathrm{d}x \\
& = - \lambda \int_{\unb} g'(u) w_1 (v-u)\ \mathrm{d}x + \lambda \int_{\unb} g'(u) w_1 w_2\, \mathrm{d}x\ .
\end{align*}
Combining the previous two inequalities gives
$$
\lambda \int_{\unb} (g(v)-g(u) - g'(u) (v-u)) w_1\ \mathrm{d}x + \lambda \int_{\unb} g'(u) w_1 w_2\, \mathrm{d}x \le 0\ .
$$
Owing to the convexity and the monotonicity of $g$ together with the sign properties of $w_1$ and $w_2$, the two terms on the left-hand side of the above inequality are non-negative. Therefore,
$$
(g(v)-g(u) - g'(u) (v-u)) w_1 = w_1 w_2 = 0 \;\;\text{ a.e. in}\;\; \unb
$$
and, in particular, 
$$
g(v)-g(u) - g'(u) (v-u) = w_2 = 0 \;\;\text{ a.e. in}\;\; \{x\in \unb\ :\ w_1(x)>0 \}\ .
$$
Since $g$ is strictly convex, this implies that $v-u=w_2=0$ a.e. in $\{x\in \unb\, :\, w_1(x)>0\}$. We have thus shown that $v-u=0$ a.e. in $\{x\in \unb\ :\ w_1(x)>0 \}$ and, since $v-u=w_2\le 0$ a.e. in $\{x\in \unb\ :\ w_1(x)=0 \}$, we conclude that $v-u\le 0$ a.e. in~$\unb$. 

\medskip

(ii) As in \cite[Lemma~11.3.4]{EGG10}, we define
$$
f(\vartheta) := \langle \vartheta v + (1-\vartheta) u , \phi \rangle + \lambda \int_{\unb} g(\vartheta v + (1- \vartheta) u) \phi\ \mathrm{d}x\ , \quad \vartheta\in [0,1]\ ,
$$
where $\phi$ is the unique positive eigenfunction of the linearized operator  $B \Delta^2 - T \Delta + \lambda g'(u) $ in $H^4_{D,r}(\unb)$ associated to the eigenvalue $\mu_1(u)=0$ satisfying $\|\phi\|_1=1$, see Lemma~\ref{leb3c}. Since $g$ is convex, $\phi>0$ in $\unb$, and $\vartheta v + (1-\vartheta) u$ satisfies
$$
B \Delta^2 (\vartheta v + (1-\vartheta) u) - T \Delta
(\vartheta v + (1-\vartheta) u) + \lambda \left( \vartheta g(v) + (1-\vartheta) g(u) \right) \le 0 \;\;\text{ in }\;\; \unb\ ,
$$
we conclude 
\begin{equation}
f(\vartheta) \le 0\ , \quad \vartheta\in [0,1]\,. \label{b7a}
\end{equation}
As 
$$
f'(\vartheta) = \langle v-u , \phi \rangle + \lambda \int_{\unb} g'(\vartheta v + (1-\vartheta) u) (v-u) \phi\ \mathrm{d}x
$$
and
$$
f''(\vartheta) = \lambda \int_{\unb} g''(\vartheta v + (1-\vartheta) u) (v-u)^2 \phi\ \mathrm{d}x\ ,
$$
the assumption $\mu_1(u)=0$ guarantees that $f'(0)=0$ while the convexity of $g$ and the positivity of $\phi$ imply that $f''(0)\ge 0$. In addition, recalling that $f(0)=0$, we deduce from \eqref{b7a} that $f''(0)\le 0$. Therefore, $f''(0)=0$ and the strict convexity of $g$ and the positivity of $\phi$ in $\unb$ entail $v=u$.
\end{proof}

In order to study more precisely the behavior of solutions to $\mathcal{S}_r^\lambda$ as the parameter $\lambda$ varies, we now derive several estimates.

\begin{lemma}\label{lec1}
There is $C_1>0$ such that
\begin{equation}
\|u \|_{H^2} + \|u \|_{C^{3/2}(\cunb)}+ \lambda \int_{\unb} g(u(x))\ \mathrm{d}x  \le C_1\  \label{c1}
\end{equation} 
whenever $\lambda\in [0,\lambda_*]$ and $u\in\mathcal{S}_r^\lambda$.
\end{lemma}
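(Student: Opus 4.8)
The plan is to derive the three bounds in \eqref{c1} one after the other, starting from the information already gathered in Lemma~\ref{L1a}, namely that $-1<u\le 0$ in $\cunb$ whenever $\lambda\in[0,\lambda_*]$ and $u\in\mathcal S_r^\lambda$. The case $\lambda=0$ is trivial since then $u=0$, so assume $\lambda\in(0,\lambda_*]$. First I would control the integral $\lambda\int_{\unb}g(u)\,\rd x$: testing \eqref{stat1} against the constant function $1$ and integrating by parts, using the clamped boundary conditions \eqref{hyper2s} (so that $\int_{\unb}\Delta^2u\,\rd x=\int_{\unb}\Delta u\,\rd x=0$), gives $\lambda\int_{\unb}g(u)\,\rd x=0$? — no: integration by parts against $1$ kills the left-hand side only if $\partial_\nu\Delta u$ vanishes, which it need not. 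So instead I would test against the positive eigenfunction $\phi_1$ from Lemma~\ref{L0}, exactly as in the proof of Lemma~\ref{L1a}: this yields
\begin{equation*}
\lambda\int_{\unb}\phi_1\,g(u)\,\rd x=-m_1\int_{\unb}\phi_1\,u\,\rd x=m_1\int_{\unb}\phi_1\,|u|\,\rd x\le m_1\|\phi_1\|_1=m_1\ ,
\end{equation*}
using $0\ge u\ge -1$ and $\|\phi_1\|_1=1$. This bounds $\lambda\int\phi_1 g(u)$ but not quite $\lambda\int g(u)$; to pass to the latter I would use that $\phi_1$ is bounded below by a positive constant times the distance to the boundary (Boggio/Hopf type estimate) or, more simply, observe that the full $L_1$ bound on $\lambda g(u)$ is not actually needed for the $H^2$ estimate and can be obtained afterward from elliptic regularity once $u$ is controlled away from the boundary; alternatively one argues on $\unb$ minus a boundary collar where $\phi_1\ge c>0$ and treats the collar separately using the global lower bound $u\ge -1+\kappa_0$ there coming from the boundary conditions $u=\partial_\nu u=0$ and $u\in C^1$. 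I expect this to need a short argument but no serious difficulty.

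Next I would establish the $H^2$ bound. Multiply \eqref{stat1} by $u$ and integrate by parts using \eqref{hyper2s}:
\begin{equation*}
B\|\Delta u\|_2^2+T\|\nabla u\|_2^2=\langle u,u\rangle=-\lambda\int_{\unb}g(u)\,u\,\rd x=\lambda\int_{\unb}g(u)\,|u|\,\rd x\ .
\end{equation*}
Here the right-hand side is bounded because $g(u)\,|u|=\dfrac{|u|}{(1+u)^2}\le\dfrac{1}{1+u}=(1+u)\,g(u)\le g(u)+\text{(bounded)}$; more precisely $|u|/(1+u)^2\le 1/(1+u)$ and, since $0\ge u\ge -1$, one has $|u|/(1+u)^2\le g(u)$. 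Hence the right-hand side is at most $\lambda\int_{\unb}g(u)\,\rd x$, which is controlled by the previous step. This gives a uniform bound on $B\|\Delta u\|_2^2+T\|\nabla u\|_2^2$; combined with $\|u\|_\infty\le 1$ and Poincaré/elliptic estimates for the clamped bilaplacian on the ball (the form $\langle\cdot,\cdot\rangle$ is equivalent to the $H^2_{D,r}$ norm by Lemma~\ref{L0} when $B>0$, since $A$ is an isomorphism $H^4_{D,r}\to L_{2,r}$ and the associated quadratic form dominates $\|\cdot\|_{H^2}^2$), we obtain $\|u\|_{H^2}\le C$.

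Finally, the $C^{3/2}(\cunb)$ bound follows from Sobolev embedding: in dimension $d\in\{1,2\}$ one has $H^2(\unb)\hookrightarrow C^{\alpha}(\cunb)$ for every $\alpha\in(0,1)$ when $d=2$ and even $H^2\hookrightarrow C^{3/2}$ directly when $d=1$; for $d=2$ the exponent $3/2$ is not reached by $H^2$ alone, so here I would bootstrap once: since $u\ge -1+\kappa_0$ with $\kappa_0>0$ on a boundary collar and the $L_1$-type control of $\lambda g(u)$ together with $u>-1$ in the interior give, via $u\in H^2\hookrightarrow C^{1/2}$ and the explicit profile monotonicity of Lemma~\ref{leb1}, that $\min_{\cunb}u=u(0)$ is bounded below by a constant depending only on $C_1$-type quantities — wait, that lower bound on $u(0)$ is precisely what we do \emph{not} have yet for $\lambda$ close to $\lambda_*$. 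The cleaner route is: $H^2(\unb)\hookrightarrow W^1_p(\unb)$ for all $p<\infty$ when $d=2$, hence $\lambda g(u)\in L_1$ plus $u\in W^1_p$ is still not enough for $C^{3/2}$. So the honest statement is that the $C^{3/2}$ bound in $d=2$ requires $\lambda g(u)\in L_q$ for some $q>d$, which fails uniformly as $u(0)\to -1$. I would therefore read \eqref{c1} as asserting the $C^{3/2}$ bound only in the range where it is legitimately available, or — and this is what I actually expect the authors intend — use that $H^2_{D,r}(\unb)\hookrightarrow C^{3/2}_{D,r}(\cunb)$ holds because radial $H^2$ functions on the $2$-ball have better regularity: writing $u(x)=\mathfrak u(|x|)$, the one-dimensional trace $\mathfrak u$ satisfies $\mathfrak u,\ \sqrt r\,\mathfrak u'\in H^1(0,1)$-type bounds that upgrade to $C^{3/2}$ on $[0,1]$.

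\medskip

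The main obstacle, then, is exactly this last point: obtaining the Hölder bound $\|u\|_{C^{3/2}(\cunb)}\le C_1$ uniformly over $\lambda\in[0,\lambda_*]$ without yet knowing that $u$ stays away from $-1$. I would handle it by exploiting radial symmetry: the key is that, by Lemma~\ref{leb1}, the profile $\mathfrak u$ is non-decreasing on $[0,1]$ with $\mathfrak u(1)=\mathfrak u'(1)=0$ and $\mathfrak u'\ge 0$, so the full oscillation of $u$ is $|u(0)|=\|u\|_\infty\le 1$ and, more importantly, the $H^2$ bound translates into a uniform $H^2(0,1)$-type bound on $\mathfrak u$ with the Jacobian weight $r^{d-1}$; for $d\in\{1,2\}$ this weighted one-dimensional $H^2$ bound embeds into $C^{3/2}([0,1])$ by a direct computation (the worst weight $r$ at the origin is harmless for the $C^{3/2}$ norm since the relevant integrals $\int_0^1 r|\mathfrak u''|^2\,\rd r<\infty$ still give $\mathfrak u'\in C^{1/2}$ near $0$ after integrating $\mathfrak u''$ against $\rd r$ and using Cauchy–Schwarz with the $\log$-integrable weight $r^{-1}$). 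This is the technical heart and I would present it as a short standalone computation; everything else is a routine combination of the test-function identity, convexity/monotonicity of $g$, and the coercivity of $A$ supplied by Lemma~\ref{L0}.
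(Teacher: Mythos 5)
Your first two steps (the $\phi_1$-test and the energy identity $\langle u,u\rangle=-\lambda\int g(u)u\,\rd x$) match the paper, but the way you propose to pass from the weighted bound $\lambda\int\phi_1 g(u)\,\rd x\le m_1$ to the unweighted bound $\lambda\|g(u)\|_1\le C$ does not work as stated: a Hopf/Boggio-type lower bound is useless here precisely because $\phi_1$ vanishes on $\uns$, and the ``uniform collar bound $u\ge-1+\kappa_0$'' is not available a priori — a uniform modulus of continuity for $u$ near $\uns$ is part of what the lemma is meant to deliver. The ingredient that actually closes this step (and which you invoke only later, for a different purpose) is the monotonicity of the profile from Lemma~\ref{leb1}: since $g(\mathfrak u)$ is non-increasing in $r$, the integral of $g(u)$ over the collar $\{3/4\le|x|\le1\}$ is dominated by its integral over the interior annulus $\{1/2\le|x|\le3/4\}$, where $\phi_1\ge\min_{\bar{\mathbb{B}}_{3/4}}\phi_1>0$, and the weighted bound then controls $\lambda\|g(u)\|_1$. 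Note also that this $L_1$ bound cannot simply be skipped, since your $H^2$ estimate uses $\lambda\int g(u)|u|\,\rd x\le\lambda\|g(u)\|_1$.

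The decisive gap is the $C^{3/2}$ bound for $d=2$. You correctly observe that $H^2\not\hookrightarrow C^{3/2}$ and that $\lambda g(u)\in L_q$, $q>2$, is not available uniformly, but the repair you propose — that \emph{radial} $H^2$ functions on the $2$-ball lie in $C^{3/2}$ — is false: take $\mathfrak u'(r)=1/\log(e/r)$, so that $\int_0^1\bigl(|\mathfrak u''|^2+|\mathfrak u'/r|^2\bigr)r\,\rd r<\infty$ while $\mathfrak u'(r)-\mathfrak u'(0)\sim 1/\log(1/r)\gg r^{1/2}$ near $r=0$; your Cauchy--Schwarz computation only yields a $(\log(r_2/r_1))^{1/2}$ modulus at the origin, not H\"older-$1/2$. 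The paper's actual mechanism is different and is the real content of this step: writing $B\Delta^2u=T\Delta u-\lambda g(u)$ with right-hand side bounded in $L_1(\unb)$ by the first two steps, it invokes a Brezis--Merle-type estimate for the clamped bilaplacian (Lemma~\ref{le.app1} in the appendix, based on the logarithmic bound for second derivatives of the Dirichlet Green function of $\Delta^2$), which gives $\|u\|_{W_q^2}\le C(q)\|T\Delta u-\lambda g(u)\|_1$ for every $q<\infty$, and then concludes via $W_q^2(\unb)\hookrightarrow C^{3/2}(\cunb)$ for $q$ large. Without this (or an equivalent $L_1$-to-$W^2_q$ estimate) your argument does not establish the H\"older part of \eqref{c1} in dimension two.
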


\begin{proof}
According to Lemma~\ref{L0} there are $m_1>0$ and $\phi_1\in C^4_r(\cunb)$ satisfying $A\phi_1=m_1\phi_1$ and
\begin{equation}
\phi_1 > 0  \;\;\text{ in }\;\; \unb\ , \quad \|\phi_1\|_1=1\ . \label{c3}
\end{equation}
Multiplying \eqref{stat1} by $\phi_1$ and integrating over $\unb$ give
$$
-m_1 \int_{\unb} \phi_1\ u\, \mathrm{d}x = \lambda \int_{\unb} g(u) \phi_1\, \mathrm{d}x\ .
$$
Since $u\ge -1$ in $\unb$, we deduce from \eqref{c3} that
\begin{equation}
0 \le \lambda \int_{\unb} g(u) \phi_1\, \mathrm{d}x \le m_1 \ . \label{c4}
\end{equation}
Next, recall that Lemma~\ref{leb1} ensures that the function $\mathfrak{u}~: [0,1] \to \mathbb{R}$ defined by $\mathfrak{u}(|x|)=u(x)$ for $x\in\cunb$ is non-decreasing. This readily implies that $g(\mathfrak{u})$ is non-increasing and, thanks to~\eqref{c4}, 
\begin{align*}
0 \le \lambda \int_{\unb} g(u(x))\ \mathrm{d}x & = \lambda |\uns| \int_0^1 g(\mathfrak{u}(r)) r^{d-1}\ \mathrm{d}r \\
& \le \lambda |\uns| \int_0^{3/4} g(\mathfrak{u}(r)) r^{d-1}\ \mathrm{d}r + \lambda |\uns| \int_{3/4}^1 g(\mathfrak{u}(r-1/4)) r^{d-1}\ \mathrm{d}r \\
& \le \lambda |\uns| \int_0^{3/4} g(\mathfrak{u}(r)) r^{d-1}\ \mathrm{d}r + 2^{d-1} \lambda  |\uns| \int_{1/2}^{3/4} g(\mathfrak{u}(r)) r^{d-1}\ \mathrm{d}r \\
& \le \lambda (1+2^{d-1}) |\uns| \int_0^{3/4} g(\mathfrak{u}(r)) r^{d-1}\ \mathrm{d}r \\
& \le  \frac{2^d \lambda}{\min_{\bar{\mathbb{B}}_{3/4}} \phi_1} \int_{\mathbb{B}_{3/4}} \phi_1(x) g(u(x))\ \mathrm{d}x \\
& \le \frac{2^d m_1}{\min_{\bar{\mathbb{B}}_{3/4}} \phi_1}\ .
\end{align*}
We have thus proved that
\begin{equation}
\lambda \| g(u)\|_1 \le C_1\ . \label{c5}
\end{equation}
It next follows from \eqref{stat1},\eqref{c5}, and the non-negativity of $g$ and $1+u$ that
\begin{equation}\label{obelix}
c\|u \|_{H^2} \le \langle u , u \rangle = - \lambda \int_{\unb} g(u) u\ \mathrm{d}x \le \lambda \|g(u)\|_1 \le C_1\ .
\end{equation}
Finally, if $d=1$, the embedding of $H^2(\unb)$ in $C^{3/2}(\cunb)$ completes the proof in this case.  If $d=2$, we note that $u$ solves $B\Delta^2 u=T\Delta u-\lambda g(u)$ in $\unb$ subject to homogeneous Dirichlet boundary conditions with $\|T\Delta u-\lambda g(u)\|_1\le C$ by \eqref{c5} and \eqref{obelix}. Hence, in this case the assertion follows from a version of the Brezis-Merle inequality \cite{BM91} (see Lemma~\ref{le.app1}) and the embedding of $W_q^2(\unb)$ in $C^{3/2}(\cunb)$ for $q$ large enough.
\end{proof}

Restricting our attention to $u\in\mathcal{S}_r^\lambda$ with a non-negative $\mu_1(u)$, the previous estimates can be improved in the following way.

\begin{lemma}\label{leb4b}
There is $C_2>0$ such that
\begin{equation}
\| u \|_{H^2} + \int_{\unb} \frac{\mathrm{d}x}{(1+u(x))^3} \le C_2 \label{b39}
\end{equation} 
whenever $\lambda\in [0,\lambda_*]$ and $u\in\mathcal{S}_r^\lambda$ with $\mu_1(u)\ge 0$.
\end{lemma}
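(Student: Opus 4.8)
The plan is to exploit the extra information contained in the stability condition $\mu_1(u)\ge 0$ by using a carefully chosen test function in the variational characterization \eqref{b6} of $\mu_1(u)$, combined with the equation \eqref{stat1} itself. First I would record that $g'(\xi)=-2(1+\xi)^{-3}$, so that testing \eqref{b6} with an admissible $v$ and using $\mu_1(u)\ge 0$ gives
$$
2\lambda\int_{\unb}\frac{v^2}{(1+u)^3}\,\rd x\le \langle v,v\rangle=\int_{\unb}\big(B|\Delta v|^2+T|\nabla v|^2\big)\,\rd x\ .
$$
The natural candidate is $v=1+u$; however this does not satisfy the clamped boundary conditions, so one must instead use a suitable cut-off or, more cleanly, a function built from $u$ that lies in $H_{D,r}^2(\unb)$. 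A convenient choice is $v=u$ itself (which is admissible by Lemma~\ref{L1a}), or a shifted version; testing with $v=u$ yields $2\lambda\int_{\unb}u^2(1+u)^{-3}\,\rd x\le\langle u,u\rangle$, and the right-hand side is already bounded by $C_1$ via \eqref{obelix} in the proof of Lemma~\ref{lec1}.

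The second and key step is to convert the bound on $\int u^2(1+u)^{-3}\,\rd x$ into a bound on $\int (1+u)^{-3}\,\rd x$. Writing $u^2=(1+u)^2-2(1+u)+1$ gives
$$
\int_{\unb}\frac{\rd x}{(1+u)^3}=\int_{\unb}\frac{u^2}{(1+u)^3}\,\rd x+2\int_{\unb}\frac{\rd x}{(1+u)^2}-\int_{\unb}\frac{\rd x}{1+u}\ .
$$
Since $-1<u\le 0$ on $\cunb$ by Lemma~\ref{L1a}(ii), the term $-\int(1+u)^{-1}\,\rd x$ is bounded below by $-|\unb|$, and $\int(1+u)^{-2}\,\rd x=\|g(u)\|_1$. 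The latter, however, is only controlled by \eqref{c5} together with a lower bound on $\lambda$, so to handle small $\lambda$ one instead notes that when $\mu_1(u)\ge 0$ and $\lambda$ is small, $u$ is close to the trivial branch and $\|g(u)\|_1$ is directly bounded; alternatively one uses that $2(1+u)^{-2}\le (1+u)^{-3}+ (1+u)^{-1}$ (by AM--GM applied to $(1+u)^{-3/2}$ and $(1+u)^{-1/2}$), so the middle term can be absorbed into the left-hand side:
$$
\int_{\unb}\frac{\rd x}{(1+u)^3}\le \int_{\unb}\frac{u^2}{(1+u)^3}\,\rd x+\frac12\int_{\unb}\frac{\rd x}{(1+u)^3}+\Big(\tfrac12-1\Big)\int_{\unb}\frac{\rd x}{1+u}\ ,
$$
whence $\tfrac12\int_{\unb}(1+u)^{-3}\,\rd x\le\int_{\unb}u^2(1+u)^{-3}\,\rd x+|\unb|\le \tfrac{C_1}{2\lambda}+|\unb|$; for $\lambda$ bounded away from $0$ this is the desired bound, and for small $\lambda$ one closes the argument by a separate elementary estimate using that $u$ stays uniformly away from $-1$ on the stable branch $\mathcal{A}_0$ (Theorem~\ref{T19}(ii) and the implicit function theorem). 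Finally, the bound $\|u\|_{H^2}\le C_2$ is immediate from \eqref{obelix}, since $c\|u\|_{H^2}\le\langle u,u\rangle=-\lambda\int g(u)u\,\rd x\le\lambda\|g(u)\|_1\le C_1$.

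I expect the main obstacle to be the treatment of small $\lambda$: the estimate \eqref{c5} degenerates like $\lambda^{-1}$, so the argument above must be supplemented near $\lambda=0$ either by the observation that stable solutions with small $\lambda$ lie on $\mathcal{A}_0$ and hence are uniformly bounded away from $-1$ (making $\int(1+u)^{-3}\,\rd x\le |\unb|(1+\min_{\cunb}u)^{-3}$ trivially bounded), or by directly testing \eqref{b6} with $v=1+u-\chi$ for an appropriate boundary corrector $\chi\in H_{D,r}^2(\unb)$ with $\chi=1$, $\partial_\nu\chi=0$ on $\uns$, and controlling the resulting error terms via Lemma~\ref{lec1}. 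The choice of the correct test function in \eqref{b6} is the crux; everything else is bookkeeping with the pointwise bound $-1<u\le 0$ and the already-established estimate \eqref{obelix}.
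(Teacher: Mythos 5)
You have the right starting point (test the stability inequality \eqref{b6} with $v=u$, which is admissible and gives $2\lambda\int_{\unb}u^2(1+u)^{-3}\,\rd x\le\langle u,u\rangle$), and your $H^2$ bound via \eqref{obelix} is fine, but the core of the lemma --- a bound on $\int_{\unb}(1+u)^{-3}\,\rd x$ that is uniform for \emph{all} $\lambda\in[0,\lambda_*]$ --- is not reached. Because you only combine the stability inequality with the bound $\langle u,u\rangle\le C_1$, the parameter $\lambda$ does not cancel and you end up with $\int_{\unb}u^2(1+u)^{-3}\,\rd x\le C_1/(2\lambda)$, which degenerates as $\lambda\to0$. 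Your proposed patch for small $\lambda$ (``stable solutions lie on $\mathcal{A}_0$ and are uniformly away from $-1$'') is not available here: that statement is essentially Proposition~\ref{thb5} (stable $\Rightarrow$ maximal), which is established later and is certainly not a consequence of Theorem~\ref{T19}~(ii) or the implicit function theorem alone; as written, this part of the argument is an appeal to an unproved fact rather than a proof. In addition, the absorption step is algebraically wrong: AM--GM gives $2(1+u)^{-2}\le(1+u)^{-3}+(1+u)^{-1}$, not $\tfrac12(1+u)^{-3}+\tfrac12(1+u)^{-1}$ (false already at $u=0$); with the correct weighted Young inequality the $(1+u)^{-3}$ term can only be absorbed at the price of a term $+\,c\int_{\unb}(1+u)^{-1}\,\rd x$ with the \emph{wrong} sign, and this integral is not a priori bounded by $|\unb|$ since $u$ may be close to $-1$.

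The missing idea is to use the stability inequality together with the \emph{equation}, i.e.\ the identity $\langle u,u\rangle=-\lambda\int_{\unb}g(u)u\,\rd x$ obtained by testing \eqref{stat1} with $u$, rather than only the bound $\langle u,u\rangle\le C_1$. Combining $-\lambda\int_{\unb}g(u)u\,\rd x=\langle u,u\rangle\ge-\lambda\int_{\unb}g'(u)u^2\,\rd x$ makes $\lambda$ cancel and yields the $\lambda$-free inequality
\begin{equation*}
\int_{\unb}\frac{3u^2+u}{(1+u)^3}\,\rd x\le0\ .
\end{equation*}
Then, since $3z^2+z\ge\tfrac14$ for $z\in(-1,-1/2)$, splitting $\unb$ into $\{u>-1/2\}$ (where $(1+u)^{-3}\le8$) and $\{u\le-1/2\}$ gives $\int_{\unb}(1+u)^{-3}\,\rd x\le C|\unb|$ uniformly in $\lambda\in[0,\lambda_*]$, with no separate treatment of small $\lambda$ needed; the $H^2$ bound then follows from \eqref{stat1} and H\"older's inequality (or from \eqref{obelix}, as you did). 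Without this cancellation, or some genuinely uniform substitute for it, your argument does not prove the lemma as stated.
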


\begin{proof}
We infer from \eqref{stat1} and the assumption $\mu_1(u)\ge 0$ that 
$$
-\lambda \int_{\unb} g(u) u\ \mathrm{d}x = \langle u , u \rangle \ge - \lambda \int_{\unb} g'(u) u^2\ \mathrm{d}x
$$
and thus
\begin{equation}
\int_{\unb} \frac{3 u^2 + u}{(1+u)^3}\ \mathrm{d}x \le 0\ .\label{b40}
\end{equation}
Observing that $3z^2+ z \ge 1/4$ for $z\in (-1,-1/2)$, we deduce from \eqref{b40} that
\begin{align*}
\int_{\unb} \frac{1}{(1+u)^3}\ \mathrm{d}x & \le \int_{\unb} \frac{\mathbf{1}_{(-1/2,\infty)}(u)}{(1+u)^3}\ \mathrm{d}x + \int_{\unb} \frac{\mathbf{1}_{(-1,-1/2)}(u)}{(1+u)^3}\ \mathrm{d}x \\
& \le 8 |\unb| + 4 \int_{\unb} \frac{\mathbf{1}_{(-1,-1/2)}(u) (3u^2+u)}{(1+u)^3}\,\rd x \le 8 |\unb|\ .
\end{align*}
Finally, \eqref{stat1} and H\"older's inequality give
$$
0 \le \langle u , u \rangle = - \lambda \int_{\unb} g(u) u\ \mathrm{d}x \le \lambda_* |\unb|^{1/3} \left( \int_{\unb} \frac{1}{(1+u)^3}\ \mathrm{d}x \right)^{2/3}\ ,
$$
and \eqref{b39} follows from the previous two inequalities and the finiteness of $\lambda_*$.
\end{proof}

\subsection{Maximal stationary solutions}\label{sec23}

We first recall the existence of maximal solutions to \eqref{stat1}-\eqref{stat2} established in \cite[Theorem~1.5]{LWxx}.

\begin{proposition}\label{prb2}
(i) For any $\lambda\in (0,\lambda_*)$, the set $\mathcal{S}_r^\lambda$ is non-empty and contains a unique maximal element $u_\lambda$ in the sense that $u\le u_\lambda$ for all radially symmetric classical subsolutions $u$ to \eqref{stat1}-\eqref{stat2} with parameter~$\lambda$. In addition, for each $x\in \mathbb{B}_1$, the function $\lambda\mapsto u_\lambda (x)$ is decreasing in $(0,\lambda_*)$. 

\medskip

(ii) There is no radially symmetric classical solution to \eqref{stat1}-\eqref{stat2} for $\lambda>\lambda_*$.
\end{proposition}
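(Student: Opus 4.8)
The plan is to establish Proposition~\ref{prb2} by combining the monotone iteration scheme based on the sign-preserving property of Lemma~\ref{leb1} with the a priori bounds already at our disposal.

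\medskip

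\textbf{Part (ii).} This is the easier half and should be dispatched first. Suppose, for contradiction, that $\lambda>\lambda_*$ and that $u\in\mathcal{S}_r^\lambda$ is a radially symmetric classical solution. By definition~\eqref{stat3} of $\lambda_*$ as a supremum, this already contradicts $\mathcal{S}_r^\lambda$ being non-empty for $\lambda>\lambda_*$, so strictly speaking (ii) is tautological from~\eqref{stat3}; what actually needs proving is that $\lambda_*$ is attained from the left, i.e. that $\mathcal{S}_r^\lambda\neq\emptyset$ for every $\lambda\in(0,\lambda_*)$, which is precisely part~(i). So the real content is (i), and (ii) is just a restatement for emphasis. (If instead the intended reading of (ii) is that there is no solution for $\lambda>\lambda_*$ in a class slightly larger than what defines $\lambda_*$, one invokes the test-function computation from the proof of Lemma~\ref{L1a} together with the strengthening $\lambda_*<m_1$ established right after it.)

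\medskip

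\textbf{Part (i), existence and maximality.} Fix $\lambda\in(0,\lambda_*)$. By definition of $\lambda_*$ there exists $\mu\in(\lambda,\lambda_*)$ with $\mathcal{S}_r^\mu\neq\emptyset$, say $v\in\mathcal{S}_r^\mu$; since $\mu>\lambda$ and $g>0$, one checks $B\Delta^2 v-T\Delta v=-\mu g(v)\le -\lambda g(v)$, so $v$ is a classical subsolution for the parameter $\lambda$ with $v>-1$ in $\cunb$. Starting from $v$, I would run the monotone iteration $u_0:=v$ and $u_{n+1}:=-\lambda A^{-1}g(u_n)$, using that $-g$ is increasing on $(-1,\infty)$ together with Lemma~\ref{leb1}: one shows inductively that $v\le u_n\le u_{n+1}\le 0$ in $\cunb$ (the upper bound $0$ coming from $-\lambda A^{-1}g(w)\le 0$ for any admissible $w$, again by Lemma~\ref{leb1}), so the sequence $(u_n)$ increases and is uniformly bounded below by $v$, hence stays in the region $u>-1$ with a uniform positive lower bound on $1+u_n$. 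The a priori estimate of Lemma~\ref{lec1} (or directly the uniform $L_\infty$ bound on $\lambda g(u_n)$) then gives boundedness in $H^4_{D,r}(\unb)$ via Lemma~\ref{L0}, so a subsequence converges in $C^4_{D,r}(\cunb)$ to a limit $u_\lambda\in\mathcal{S}_r^\lambda$; monotone convergence identifies the full-sequence limit. Maximality follows from the comparison structure: if $w$ is any radially symmetric classical subsolution with parameter $\lambda$ and $w>-1$, then $w\le u_0=v$ need not hold a priori, so instead I would start the iteration from $w$ itself (or from $\max\{w,v\}$, handled via the same sign-preserving argument), obtaining the same limit $u_\lambda$ and hence $w\le u_\lambda$; uniqueness of the maximal element is then immediate.

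\medskip

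\textbf{Monotonicity in $\lambda$.} For $0<\lambda_1<\lambda_2<\lambda_*$, the solution $u_{\lambda_2}$ satisfies $B\Delta^2 u_{\lambda_2}-T\Delta u_{\lambda_2}=-\lambda_2 g(u_{\lambda_2})\le -\lambda_1 g(u_{\lambda_2})$, so $u_{\lambda_2}$ is a classical subsolution with parameter $\lambda_1$, whence $u_{\lambda_2}\le u_{\lambda_1}$ by maximality of $u_{\lambda_1}$. To upgrade ``$\le$'' to ``decreasing'' (strict inequality pointwise in $\unb$), apply Lemma~\ref{leb1} to $w:=u_{\lambda_2}-u_{\lambda_1}$, which solves the clamped problem with right-hand side $f=-\lambda_2 g(u_{\lambda_2})+\lambda_1 g(u_{\lambda_1})$; one writes $f = -\lambda_1\big(g(u_{\lambda_2})-g(u_{\lambda_1})\big) - (\lambda_2-\lambda_1)g(u_{\lambda_2})$, the first bracket being $\le 0$ since $u_{\lambda_2}\le u_{\lambda_1}$ and $g$ is decreasing, the second term being $<0$, so $f<0$ and the dichotomy~\eqref{b3} gives $w<0$ in $\unb$.

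\medskip

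\textbf{Main obstacle.} The delicate point is justifying convergence of the monotone iteration uniformly away from the singularity $u=-1$: one must be sure the uniform lower bound $1+u_n\ge 1+v\ge c>0$ survives passing to the limit and that the sequence does not degenerate. This is guaranteed here because the lower barrier $v$ is a fixed function with $v>-1$ on the compact set $\cunb$, and the bounds of Lemma~\ref{lec1}/Lemma~\ref{leb4b} together with Lemma~\ref{L0} furnish compactness in $C^4$; so the obstacle is really just bookkeeping rather than a genuine difficulty, the heavy lifting having been done in \cite{LWxx} (Lemma~\ref{leb1}) and in the preceding lemmas.
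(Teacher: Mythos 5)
First, note that the paper does not prove Proposition~\ref{prb2} at all: it is recalled verbatim from \cite[Theorem~1.5]{LWxx}, so any self-contained argument has to stand on its own. Within this paper's framing your reading of part~(ii) is fine (it is immediate from the definition \eqref{stat3} of $\lambda_*$ as a supremum), and your existence construction for part~(i) is essentially sound: $v\in\mathcal{S}_r^\mu$ with $\mu>\lambda$ is a subsolution for the parameter $\lambda$, the upward iteration $u_{n+1}=-\lambda A^{-1}g(u_n)$ is increasing and confined to $[v,0]$ by the order-preserving property from Lemma~\ref{leb1}, and compactness via Lemma~\ref{L0} yields a classical solution.

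The genuine gap is in the maximality step. The map $u\mapsto -\lambda A^{-1}g(u)$ is order-preserving, so the upward iteration started from a subsolution converges to the \emph{minimal} fixed point lying above that subsolution; there is no reason it should produce the same limit for different starting points, and your claim that the iteration started from an arbitrary subsolution $w$ (or from $\max\{w,v\}$) yields ``the same limit $u_\lambda$'' is in fact false in general. Indeed, by Theorem~\ref{C217}~(ii) of this very paper, for $\lambda\in(0,\lambda_*)$ there is a second solution $U(s_2)\le u_\lambda$ with $U(s_2)\neq u_\lambda$; being a solution, $U(s_2)$ is a fixed point of the iteration map, so the iteration started from $w=U(s_2)$ is constant and never reaches $u_\lambda$. (The inequality $w\le u_\lambda$ you want is still true, but your argument does not give it; also, $\max\{w,v\}$ of two $C^4$ subsolutions is in general not a classical subsolution, and no fourth-order analogue of Kato's inequality is available to repair that.) The standard fix, and presumably the route of \cite{LWxx}, is to iterate \emph{downward} from the universal supersolution $0$: set $U_0=0$, $U_{n+1}=-\lambda A^{-1}g(U_n)$, show by induction that $(U_n)$ is non-increasing and that \emph{every} radially symmetric classical subsolution $w$ for the parameter $\lambda$ satisfies $w\le U_n$ (using $w\le 0$ and the monotonicity of $g$ together with Lemma~\ref{leb1}), and use your fixed subsolution $v>-1$ as a lower barrier to keep $U_n\ge v$ so that the decreasing limit is a classical solution; this limit then dominates all subsolutions simultaneously and is the maximal element $u_\lambda$. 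Your monotonicity-in-$\lambda$ argument is correct as written, but since it invokes the maximality of $u_{\lambda_1}$ it only becomes valid once the maximality step is repaired as above.
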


We supplement Proposition~\ref{prb2} with continuity properties of $\lambda\longmapsto u_\lambda$.

\begin{lemma}\label{leb2b}
The map $\lambda\longmapsto u_\lambda$ is continuous from $[0,\lambda_*)$  to $C_r^2(\cunb)$ with $u_0=0$. In addition, the map $\lambda \longmapsto \mu_1(u_\lambda)$ belongs to $C([0,\lambda_*))$.
\end{lemma}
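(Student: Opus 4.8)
The plan is to prove continuity of $\lambda \mapsto u_\lambda$ by a compactness-plus-uniqueness argument, exploiting the monotonicity of the maximal branch already established in Proposition~\ref{prb2} together with the characterization of maximal solutions via subsolutions. First I would fix $\lambda \in [0,\lambda_*)$ and take an arbitrary sequence $\lambda_n \to \lambda$ in $[0,\lambda_*)$. Since $\lambda_n$ stays in a compact subinterval $[0,\lambda']$ of $[0,\lambda_*)$, the monotonicity statement in Proposition~\ref{prb2}(i) gives a uniform lower bound $u_{\lambda_n} \geq u_{\lambda'} \geq -1 + \delta$ in $\cunb$ for some $\delta>0$; hence $g(u_{\lambda_n})$ is bounded in $L_\infty(\unb)$, and by Lemma~\ref{L0} the sequence $(u_{\lambda_n})$ is bounded in $H^4_{D,r}(\unb)$, which embeds compactly in $C^2_{D,r}(\cunb)$ (since $d\in\{1,2\}$). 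So along a subsequence $u_{\lambda_n} \to v$ in $C^2_{D,r}(\cunb)$ with $v \geq -1+\delta$, and passing to the limit in $A u_{\lambda_n} = -\lambda_n g(u_{\lambda_n})$ shows $v \in \mathcal{S}_r^\lambda$.

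The next step is to identify $v = u_\lambda$. One inclusion is immediate: $v \leq u_\lambda$ because $v$ is in particular a subsolution with parameter $\lambda$ and $u_\lambda$ is the maximal such. For the reverse inequality I would show that $u_\lambda$ is, for each $n$, a subsolution with parameter $\lambda_n$ when $\lambda_n \leq \lambda$, since $A u_\lambda = -\lambda g(u_\lambda) \leq -\lambda_n g(u_\lambda)$ (using $g \geq 0$), whence $u_\lambda \leq u_{\lambda_n}$ by maximality; letting $n\to\infty$ along the convergent subsequence gives $u_\lambda \leq v$. For the indices with $\lambda_n > \lambda$ one argues symmetrically using $u_{\lambda_n} \leq u_\lambda$ from monotonicity, so in all cases $v = u_\lambda$. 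Since every subsequence has a further subsequence converging to the same limit $u_\lambda$, the whole sequence converges, giving continuity into $C^2_r(\cunb)$; the value $u_0 = 0$ is already recorded in Lemma~\ref{L1a}(i).

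For the continuity of $\lambda \mapsto \mu_1(u_\lambda)$, I would first note that $\mu_1(u_\lambda) \geq 0$ for $\lambda \in [0,\lambda_*)$ — this should follow from the fact that $(\lambda, u_\lambda) \in \mathcal{A}_0$ for $\lambda$ in this range (so $F_u(\lambda,u_\lambda)$ is boundedly invertible, hence $\mu_1(u_\lambda) > 0$ by Lemma~\ref{leb3c}(ii)), or can be obtained directly from the maximality of $u_\lambda$. Then, using the variational characterization \eqref{b6} and the just-proven $C^2$-convergence $u_{\lambda_n} \to u_\lambda$ (which makes $\lambda_n g'(u_{\lambda_n}) \to \lambda g'(u_\lambda)$ uniformly, again thanks to the uniform lower bound $u_{\lambda_n} \geq -1+\delta$), continuity of the infimum follows from a standard argument: $\limsup_n \mu_1(u_{\lambda_n}) \leq \mu_1(u_\lambda)$ by testing with a near-optimal fixed $v$ for $u_\lambda$, and $\liminf_n \mu_1(u_{\lambda_n}) \geq \mu_1(u_\lambda)$ by extracting a weakly $H^2$-convergent near-minimizing sequence and using weak lower semicontinuity of $\langle\cdot,\cdot\rangle$ together with the uniform convergence of the potentials.

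I expect the main obstacle to be the bookkeeping in the second step — carefully handling the two cases $\lambda_n \leq \lambda$ and $\lambda_n > \lambda$ so that the subsolution/maximality comparisons go through in the right direction, and making sure the argument that $u_\lambda$ is a subsolution for nearby parameters is airtight (this uses only $g \geq 0$ and the sign of $\lambda_n - \lambda$, which is clean). Everything else is routine elliptic regularity, compact embedding in dimension $d \leq 2$, and a textbook semicontinuity argument for the eigenvalue; the only subtlety there is ensuring the uniform lower bound $1+u_{\lambda_n} \geq \delta$ is in force so that $g, g', g''$ evaluated at $u_{\lambda_n}$ are uniformly bounded and converge uniformly.
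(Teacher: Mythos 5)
Your compactness step and the case $\lambda_n\le\lambda$ are fine (and match the paper), but the identification of the limit fails precisely in the case you wave off as ``symmetric''. When $\lambda_n>\lambda$, monotonicity (or the observation that $u_{\lambda_n}$ is a subsolution at parameter $\lambda$) only gives $u_{\lambda_n}\le u_\lambda$, hence $v\le u_\lambda$ in the limit -- which is the \emph{same} inequality you already have from $v\in\mathcal{S}_r^\lambda$ and the maximality of $u_\lambda$, not its reverse. Nothing in your argument excludes a downward jump of the maximal branch as $\lambda$ is approached from above, i.e.\ $v\lneq u_\lambda$; and this is not a formality, since $\mathcal{S}_r^\lambda$ genuinely contains solutions strictly below $u_\lambda$ (that is the whole point of Theorem~\ref{C217}(ii)), so ``$v$ solves the equation at $\lambda$ and $v\le u_\lambda$'' cannot force $v=u_\lambda$. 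Note also that $u_\lambda$ is \emph{not} a subsolution for the larger parameter $\lambda_n$ (the inequality $-\lambda g(u_\lambda)\le-\lambda_n g(u_\lambda)$ goes the wrong way), so the device you used for $\lambda_n\le\lambda$ has no mirror image.

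The missing idea, which is how the paper closes this gap, is to show that the limit $v$ is itself \emph{maximal} at parameter $\lambda$: for any radially symmetric classical subsolution $\sigma$ at parameter $\lambda$ (it suffices to take $\sigma=u_\lambda$), one checks via the convexity of $g$ and the bound $g(u_{\eta\lambda_*})\ge1$ that the convex combination $(1-\vartheta)\sigma+\vartheta u_{\eta\lambda_*}$, with $\lambda<\eta\lambda_*<\lambda_*$ fixed and $\vartheta\in(0,1)$, is a subsolution at parameter $\lambda_n$ once $|\lambda_n-\lambda|\,\|g(\sigma)\|_\infty\le\vartheta(\eta\lambda_*-\lambda_n)$; maximality of $u_{\lambda_n}$ then gives $(1-\vartheta)\sigma+\vartheta u_{\eta\lambda_*}\le u_{\lambda_n}$, and letting first $n\to\infty$ and then $\vartheta\to0$ yields $\sigma\le v$, hence $u_\lambda\le v$ in all cases. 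Separately, for the second assertion you do not need $\mu_1(u_\lambda)\ge0$ at all -- invoking $(\lambda,u_\lambda)\in\mathcal{A}_0$ or Corollary~\ref{C20} here would be circular, since those results rest on Proposition~\ref{thb5}, which uses this very lemma; your two-sided semicontinuity argument from \eqref{b6} together with the uniform convergence of $\lambda_n g'(u_{\lambda_n})$ (guaranteed by the uniform bound $1+u_{\lambda_n}\ge\delta$) already gives continuity of $\mu_1(u_\lambda)$, which is all the paper claims.
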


\begin{proof}
Fix $\lambda\in [0,\lambda_*)$ and let $(\lambda_k)_{k\ge 1}$ be a sequence in $[0,\lambda_*)$ such that $\lambda_k\to \lambda$ as $k\to\infty$. Then there is $\eta\in (0,1)$ such that $$
\lambda<\eta\lambda_* \;\;\text{ and }\;\; \lambda_k\le \eta\lambda_*<\lambda_*\ , \quad k\ge 1\ .
$$ 
Proposition~\ref{prb2} ensures that $u_{\lambda_k} \ge u_{\eta\lambda_*}$ for all $k\ge 1$, so that $(u_{\lambda_k})_{k\ge 1}$ ranges in a compact subset of $(-1,0]$. Therefore, $(g(u_{\lambda_k}))_{k\ge 1}$ is bounded in $L_\infty(\unb)$ and classical regularity results entail that $(u_{\lambda_k})_{k\ge 1}$ is bounded in $W_q^4(\unb)$ for all $q\in (1,\infty)$, see \cite[Theorem~2.20]{GGS10} for instance. The compactness of Sobolev's embedding then implies that a subsequence of $(u_{\lambda_k})_{k\ge 1}$ (not relabeled) converges weakly in $H^4(\unb)$ and strongly in $C^3(\cunb)$ to a function $u\in H^4_{D,r}(\unb)$, which is a strong solution to \eqref{stat1}-\eqref{stat2} and satisfies $u\ge u_{\eta\lambda_*} > -1$ in $\cunb$. Since $g$ is smooth in $(-1,\infty)$, there is $\alpha>0$ such that $g(u)$ belongs to $C^{1+\alpha }(\cunb)$ and a further use of classical elliptic regularity results guarantees that $u$ actually belongs to $\mathcal{S}_r^\lambda$, see \cite[Theorem~2.19]{GGS10} for instance.

Consider now a radially symmetric classical subsolution $\sigma\in C_r^4(\unb)\cap C^2(\cunb)$ to \eqref{stat1}-\eqref{stat2} with $\sigma>-1$ in $\cunb$. For $\vartheta\in (0,1)$ and $k\ge 1$, we infer from the convexity of $g$ and the properties $\lambda_k\le \eta\lambda_*$ and  $g(u_{\eta\lambda_*})\ge 1$ in $\unb$ that 
\begin{align*}
 B \Delta^2 &\left( (1-\vartheta) \sigma + \vartheta u_{\eta\lambda_*} \right) - T \Delta \left( (1-\vartheta) \sigma + \vartheta u_{\eta\lambda_*} \right) + \lambda_k g\left( (1-\vartheta) \sigma + \vartheta u_{\eta\lambda_*} \right) \\
\le & (1-\vartheta) \left( B \Delta^2 \sigma - T \Delta \sigma + \lambda_k g(\sigma) \right) + \vartheta \left( B \Delta^2 u_{\eta\lambda_*} - T \Delta u_{\eta\lambda_*} + \lambda_k g(u_{\eta\lambda_*}) \right) \\
\le & (1-\vartheta) (\lambda_k-\lambda) g(\sigma) + \vartheta (\lambda_k - \eta\lambda_*) g(u_{\eta\lambda_*}) \\
\le & |\lambda_k - \lambda| \|g(\sigma)\|_\infty - \vartheta (\eta\lambda_* - \lambda_k)\ .
\end{align*}
Since $\lambda_k\to \lambda$ as $k\to\infty$ and $\lambda<\eta\lambda_*$, there is $k_\vartheta\ge 1$ large enough such that, for all $k\ge k_\vartheta$, 
$$
|\lambda_k - \lambda| \|g(\sigma)\|_\infty - \vartheta (\eta\lambda_* - \lambda_k) \le 0\ ,
$$
and hence $(1-\vartheta) \sigma + \vartheta u_{\eta\lambda_*}$ is a subsolution to \eqref{stat1}-\eqref{stat2} with parameter $\lambda_k$. Therefore, owing to the maximality property of $u_{\lambda_k}$,
$$
(1-\vartheta) \sigma + \vartheta u_{\eta\lambda_*} \le u_{\lambda_k} \;\;\text{ in }\;\; \unb 
$$ 
for all $k\ge k_\vartheta$. We first let $k\to\infty$ and then $\vartheta\to 0$ in the above inequality to conclude that $\sigma\le u$ in $\unb$. In other words, $u$ is a maximal solution to \eqref{stat1}-\eqref{stat2} and thus $u=u_\lambda$. 

Owing to the definition \eqref{b6}, the continuity of $\lambda\longmapsto \mu_1(u_\lambda)$ in $[0,\lambda_*)$ readily follows from that of $\lambda\longmapsto u_\lambda$ which we have just established.
\end{proof}

The next proposition entails that the maximal solutions are exactly the linearly stable solutions.
 
\begin{proposition}\label{thb5}
Let $\lambda\in [0,\lambda_*)$. Then $\mu_1(u_\lambda)>0$, and if $u\in\mathcal{S}_r^\lambda$ satisfies $\mu_1(u)\ge 0$, then $u=u_\lambda$.
\end{proposition}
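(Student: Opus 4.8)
The plan is to treat the two assertions separately: I would first prove the positivity $\mu_1(u_\lambda)>0$, and then deduce the rigidity statement almost immediately from the comparison principle. For the rigidity part, if $\lambda=0$ there is nothing to prove, since $\mathcal{S}_r^0=\{0\}$ and $u_0=0$ by Lemma~\ref{L1a}(i) and Lemma~\ref{leb2b}; and if $\lambda\in(0,\lambda_*)$ and $u\in\mathcal{S}_r^\lambda$ satisfies $\mu_1(u)\ge 0$, then the maximal solution $u_\lambda$ is in particular a radially symmetric classical subsolution to \eqref{stat1}-\eqref{stat2} with parameter $\lambda$ (it satisfies the equation with equality) and lies in $C^4_{D,r}(\cunb)$ by Lemma~\ref{L1a}(i). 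Lemma~\ref{leb4}(i), applied with the solution $u$, then yields $u_\lambda\le u$ in $\unb$, while the maximality of $u_\lambda$ from Proposition~\ref{prb2}(i) gives $u\le u_\lambda$; hence $u=u_\lambda$.

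For the positivity $\mu_1(u_\lambda)>0$, I would argue by contradiction, using the continuity of $\lambda\mapsto\mu_1(u_\lambda)$ on $[0,\lambda_*)$ from Lemma~\ref{leb2b}. Since $u_0=0$, one has $\mu_1(u_0)=\mu_1(0)=\inf\{\langle v,v\rangle\,:\,v\in H^2_{D,r}(\unb),\ \|v\|_2=1\}>0$ by the coercivity of $A$ on $H^2_{D,r}(\unb)$ already used in the proof of Lemma~\ref{lec1}. If $\mu_1(u_{\lambda_1})\le 0$ for some $\lambda_1\in(0,\lambda_*)$, set
\[
\bar\lambda:=\inf\bigl\{\mu\in[0,\lambda_*)\ :\ \mu_1(u_\mu)\le 0\bigr\}\,.
\]
Continuity of $\mu\mapsto\mu_1(u_\mu)$ together with $\mu_1(u_0)>0$ then forces $\bar\lambda\in(0,\lambda_1]\subset(0,\lambda_*)$ and $\mu_1(u_{\bar\lambda})=0$.

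The remaining step -- and this is where I expect the heart of the argument to lie -- is to turn the existence of such a $\bar\lambda$ into a contradiction by means of the rigidity in Lemma~\ref{leb4}(ii). Choose $\lambda'\in(\bar\lambda,\lambda_*)$, which is nonempty; by Proposition~\ref{prb2}(i) and Lemma~\ref{L1a}(i), the maximal solution $u_{\lambda'}\in\mathcal{S}_r^{\lambda'}$ exists and lies in $C^4_{D,r}(\cunb)$ with $u_{\lambda'}>-1$ in $\cunb$. Since $g>0$ and $\lambda'>\bar\lambda$, one has $B\Delta^2 u_{\lambda'}-T\Delta u_{\lambda'}=-\lambda'g(u_{\lambda'})\le-\bar\lambda g(u_{\lambda'})$ in $\unb$, so $u_{\lambda'}$ is a classical subsolution to \eqref{stat1}-\eqref{stat2} with parameter $\bar\lambda$. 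As $\mu_1(u_{\bar\lambda})=0$, Lemma~\ref{leb4}(ii) forces $u_{\lambda'}=u_{\bar\lambda}$ in $\unb$; substituting this equality into the stationary equations satisfied by $u_{\lambda'}$ (with voltage $\lambda'$) and by $u_{\bar\lambda}$ (with voltage $\bar\lambda$) gives $(\lambda'-\bar\lambda)g(u_{\bar\lambda})=0$ in $\unb$, which is impossible since $\lambda'>\bar\lambda$ and $g(u_{\bar\lambda})=(1+u_{\bar\lambda})^{-2}>0$ on $\cunb$. Hence $\mu_1(u_\lambda)>0$ for all $\lambda\in[0,\lambda_*)$.

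The only genuinely delicate point is obtaining the strict inequality $\mu_1(u_\lambda)>0$ rather than merely $\mu_1(u_\lambda)\ge 0$: everything rests on the equality case of the comparison principle, Lemma~\ref{leb4}(ii), which in turn relies on the radial maximum principle for $B\Delta^2-T\Delta$ from \cite{LWxx} and on the strict convexity of $g$. Apart from that, the reasoning is the standard continuation argument along the branch of minimal (stable) solutions, as carried out in \cite[Chapter~11]{EGG10} for $T=0$.
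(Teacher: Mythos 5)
Your proof is correct and takes essentially the same route as the paper: a contradiction at the first parameter value where $\mu_1(u_\lambda)$ vanishes (located via the continuity from Lemma~\ref{leb2b}), resolved by applying the rigidity Lemma~\ref{leb4}(ii) to $u_{\lambda'}$ viewed as a strict subsolution at that parameter, with the identification $u=u_\lambda$ obtained exactly as in the paper from Lemma~\ref{leb4}(i) combined with the maximality of $u_\lambda$. The only cosmetic differences are that the paper also records the monotonicity of $\lambda\mapsto\mu_1(u_\lambda)$ (via Proposition~\ref{prb2}) and works with the supremum of $\{\lambda:\mu_1(u_\lambda)>0\}$, whereas you use continuity alone and the infimum of $\{\lambda:\mu_1(u_\lambda)\le 0\}$, and you spell out the final contradiction explicitly.
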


\begin{proof}
Due to the monotonicity and negativity of $g'$ and the monotonicity of $\lambda\longmapsto u_\lambda$ stated in Proposition~\ref{prb2}, it readily follows from \eqref{b6} that 
\begin{equation*}
\mu_1(u_{\lambda_1}) \ge \mu_1(u_{\lambda_2}) \;\;\text{ for }\;\; 0 \le \lambda_1 \le \lambda_2 < \lambda_*\ . \label{c80}
\end{equation*}
Introducing 
$$
\lambda_{st} := \sup{\left\{ \lambda \in [0,\lambda_*)\, : \, \mu_1(u_\lambda) > 0 \right\}}\ ,
$$
we assume for contradiction that $\lambda_{st}<\lambda_*$. Lemma~\ref{leb2b} then ensures that $\mu_1(u_{\lambda_{st}})=0$. Now, given $\lambda\in (\lambda_{st},\lambda_*)$, we deduce from \eqref{stat1} that 
$$
B \Delta^2 u_\lambda - T \Delta u_\lambda + \lambda_{st} g(u_\lambda) = (\lambda_{st} - \lambda) g(u_\lambda) < 0\  \text{ in }\ \unb\ .
$$Applying Lemma~\ref{leb4}~(ii), we conclude that $u_\lambda=u_{\lambda_{st}}$ and a contradiction. Therefore, \mbox{$\lambda_{st}=\lambda_*$}.
Finally, considering $u\in\mathcal{S}_r^\lambda$ such that $\mu_1(u)\ge 0$, Lemma~\ref{leb4}~(i)  implies  $u_\lambda\le u$ while the maximal property of $u_\lambda$ guarantees  $u\le u_\lambda$. Therefore, $u=u_\lambda$.
\end{proof}

\medskip

We now show that the maximal arc-connected set $\mathcal{A}_0$ defined in \eqref{A0} coincides with the branch of maximal solutions $(\lambda,u_\lambda)$, $\lambda\in (0,\lambda_*)$.

\begin{corollary}\label{C20}
There holds $\lambda_0=\lambda_*$ and $V(\lambda)=u_\lambda$ for each $\lambda\in (0,\lambda_*)$. Moreover, $\lim_{\lambda\rightarrow\lambda_*} \mu_1(u_\lambda)=0$.
\end{corollary}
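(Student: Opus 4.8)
The plan is to exploit the characterization of $\mathcal{A}_0$ via the implicit function theorem (which forces $F_u(\lambda,V(\lambda))$ to be boundedly invertible, equivalently $\mu_1(V(\lambda))>0$ by Lemma~\ref{leb3c}(ii)) together with the identification of linearly stable solutions as maximal solutions (Proposition~\ref{thb5}). First I would observe that for each $\lambda\in(0,\lambda_0)$ the solution $V(\lambda)\in\mathcal{S}_r^\lambda$ satisfies $\mu_1(V(\lambda))>0$, so Proposition~\ref{thb5} gives $V(\lambda)=u_\lambda$ immediately; in particular $\lambda_0\le\lambda_*$ (which we already knew) and, by the definition of $\lambda_{st}$ in the proof of Proposition~\ref{thb5}, we also get $\lambda_0\le\lambda_{st}$. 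Since $\lambda_{st}=\lambda_*$ was proved there, this yields $\lambda_0\le\lambda_*$ in the direction we want anyway; the real content is the reverse inequality $\lambda_0\ge\lambda_*$.

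To prove $\lambda_0=\lambda_*$, I would argue by contradiction, assuming $\lambda_0<\lambda_*$. The definition of $\lambda_0$ as the maximal value for which a real analytic branch $V$ with boundedly invertible $F_u(\lambda,V(\lambda))$ exists means that either $V$ cannot be continued past $\lambda_0$ as a $C^1_{D,r}(\cunb)$-valued map, or it can be continued but $F_u$ fails to be boundedly invertible at $\lambda_0$. Now since $\lambda_0<\lambda_*$, Proposition~\ref{prb2} and Lemma~\ref{leb2b} show that $u_\lambda$ is defined and continuous in $C^2_r(\cunb)$ on a neighborhood of $\lambda_0$ in $[0,\lambda_*)$, and we have just shown $u_\lambda=V(\lambda)$ for $\lambda<\lambda_0$; hence $V$ extends continuously to $\lambda_0$ with $V(\lambda_0)=u_{\lambda_0}$, and in fact Proposition~\ref{thb5} gives $\mu_1(u_{\lambda_0})>0$ since $\lambda_0<\lambda_*=\lambda_{st}$. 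By Lemma~\ref{leb3c}(ii) this means $F_u(\lambda_0,u_{\lambda_0})$ is boundedly invertible. The implicit function theorem applied at $(\lambda_0,u_{\lambda_0})$ then produces a real analytic extension of $V$ to a right-neighborhood of $\lambda_0$ on which $F_u$ stays boundedly invertible, contradicting the maximality of $\lambda_0$. Therefore $\lambda_0=\lambda_*$, and $V(\lambda)=u_\lambda$ holds for all $\lambda\in(0,\lambda_*)$.

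Finally, for $\lim_{\lambda\to\lambda_*}\mu_1(u_\lambda)=0$: by the monotonicity $\mu_1(u_{\lambda_1})\ge\mu_1(u_{\lambda_2})$ for $\lambda_1\le\lambda_2$ established at the start of the proof of Proposition~\ref{thb5}, together with $\mu_1(u_\lambda)>0$ for $\lambda<\lambda_*$, the limit $\ell:=\lim_{\lambda\to\lambda_*}\mu_1(u_\lambda)$ exists and is nonnegative. If $\ell>0$, one uses the uniform bounds of Lemma~\ref{leb4b} (valid precisely because $\mu_1(u_\lambda)\ge 0$): along a sequence $\lambda_k\uparrow\lambda_*$, the $u_{\lambda_k}$ are bounded in $H^2$ with $\int_{\unb}(1+u_{\lambda_k})^{-3}\,\rd x$ bounded, which via the equation and elliptic regularity forces $(u_{\lambda_k})$ into a compact subset of $C^1_{D,r}(\cunb)$ bounded away from $-1$; passing to the limit yields a solution $u_{\lambda_*}\in\mathcal{S}_r^{\lambda_*}$ with $\mu_1(u_{\lambda_*})=\ell>0$, and then the implicit function theorem (as in the $\lambda_0$ argument) continues the branch past $\lambda_*$, contradicting the definition of $\lambda_*$. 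Hence $\ell=0$. The main obstacle here is the passage to the limit as $\lambda\to\lambda_*$: one must extract compactness from the a~priori estimates of Lemma~\ref{leb4b} rather than from a crude lower bound on $1+u$, since a~priori $u_{\lambda}$ could approach $-1$ at the origin; the point is that the integral bound $\int_{\unb}(1+u_{\lambda})^{-3}\,\rd x\le C_2$ rules this out, giving a uniform positive lower bound on $1+u_{\lambda}$ and hence on $g(u_\lambda)$ in $L_\infty$, after which standard elliptic regularity closes the argument. (This also shows $\mathcal{S}_r^{\lambda_*}\ne\emptyset$, which is needed later to guarantee $\mathcal{A}_0\ne\mathcal{A}$.)
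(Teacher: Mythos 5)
Your overall strategy is the paper's own: use Lemma~\ref{leb3c}~(ii) and Proposition~\ref{thb5} to identify $V(\lambda)$ with $u_\lambda$, then rule out $\lambda_0<\lambda_*$ via Lemma~\ref{leb2b} and the implicit function theorem, and finally get the vanishing of $\mu_1(u_\lambda)$ by a compactness-plus-continuation argument (the compactness part is exactly what the paper carries out in the proof of Proposition~\ref{prb7}, so your more detailed treatment of the limit is fine and not circular). However, there is one concrete gap at the very first step. You claim that, along the branch, bounded invertibility of $F_u(\lambda,V(\lambda))$ is \emph{equivalent} to $\mu_1(V(\lambda))>0$ ``by Lemma~\ref{leb3c}(ii)''. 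The lemma does not say this: its equivalence is stated (and proved) under the standing hypothesis $\mu_1(u)\ge 0$, which is precisely what you are trying to establish. From invertibility alone you can only conclude $\mu_1(V(\lambda))\neq 0$ (via the implication ``$\mu_1=0\Rightarrow F_u$ not invertible'', whose hypothesis $\mu_1\ge0$ is then satisfied); the possibility $\mu_1(V(\lambda))<0$ is not excluded by the lemma, and it is not vacuous — on the part of the curve $\mathcal{A}$ beyond $s_*$ one expects exactly this configuration, $F_u$ invertible with $\mu_1<0$. Without $\mu_1(V(\lambda))\ge 0$ you cannot invoke Proposition~\ref{thb5} to get $V(\lambda)=u_\lambda$, and the rest of your argument (in particular $\lambda_0\le\lambda_{st}$ and the continuation step at $\lambda_0$) rests on that identification.

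The repair is short and is what the paper does: since $V$ is continuous on $[0,\lambda_0)$ with values bounded away from $-1$, the map $\lambda\mapsto\mu_1(V(\lambda))$ is continuous, and $\mu_1(V(0))=\mu_1(0)=m_1>0$ by Lemma~\ref{L0}. If $\mu_1(V(\lambda))$ were $\le 0$ somewhere on $[0,\lambda_0)$, by the intermediate value theorem there would be a first $\lambda_1$ with $\mu_1(V(\lambda_1))=0$; there the hypothesis of Lemma~\ref{leb3c} holds, so part~(ii) gives that $F_u(\lambda_1,V(\lambda_1))$ is \emph{not} boundedly invertible, contradicting the definition of the branch. Hence $\mu_1(V(\lambda))>0$ on $[0,\lambda_0)$, and from there your argument goes through as written (with the very minor slips that the integral bound of Lemma~\ref{leb4b} yields an \emph{upper} bound on $g(u_\lambda)$ in $L_\infty$, and that obtaining the uniform lower bound on $1+u_\lambda$ requires the $C^1$ bound from elliptic regularity together with the fact, from Lemma~\ref{leb1}, that the minimum of $u_\lambda$ is attained at the origin with vanishing gradient there in the limit).
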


\begin{proof}
Since $V(\lambda)$ is for each $\lambda\in (0,\lambda_0)$ a classical radially symmetric solution to \eqref{stat1}-\eqref{stat2}, we clearly have $\lambda_0\le\lambda_*$. We now claim that
\begin{equation}\label{a1u}
\mu_1(V(\lambda))>0\ \text{ for } \lambda\in [0,\lambda_0)\ .
\end{equation}
Indeed, the continuity of $V$ entails that $\lambda\mapsto \mu_1(V(\lambda))$ is continuous on $[0,\lambda_0)$ with $$\mu_1(V(0))=\mu_1(0)=m_1>0$$ with $m_1$ introduced in Lemma~\ref{L0}. Clearly, Lemma~\ref{leb3c} (ii) implies $\mu_1(V(\lambda))>0$ for each $\lambda\in [0,\lambda_0)$. Consequently, $u_\lambda=V(\lambda)$ for each $\lambda\in [0,\lambda_0)$ by Proposition~\ref{thb5}. 

Now, suppose for contradiction that $\lambda_0<\lambda_*$. Then $V(\lambda)\rightarrow u_{\lambda_0}$ in $C_r^2(\cunb)$ as $\lambda\rightarrow\lambda_0$ according to Lemma~\ref{leb2b}. Owing to the maximality of $\lambda_0$, this implies that $F_u(\lambda_0,u_{\lambda_0})$ is not boundedly invertible which contradicts Proposition~\ref{thb5} since $\mu_1(u_{\lambda_0})>0$. In particular, $\lim_{\lambda\rightarrow\lambda_*} \mu_1(u_\lambda)=0$ due to the maximality $\lambda_*$.
\end{proof}

We finally investigate the behavior of $u_\lambda$ as $\lambda\to\lambda_*$. 

\begin{proposition}\label{prb7}
For $x\in\cunb$, define
\begin{equation}
u_{\lambda_*}(x) := \inf_{\lambda\in (0,\lambda_*)} u_\lambda(x) = \lim_{\lambda\to\lambda_*} u_\lambda(x) \in [-1,0]\ . \label{b500}
\end{equation}
Then $u_{\lambda_*}\in\mathcal{S}_r^{\lambda_*}$ and $\mu_1(u_{\lambda_*}) = 0$.  Moreover, any $u\in \mathcal{S}_r^{\lambda_*}$ satisfies $u\le u_{\lambda_*}$ in $\unb$ and if, in addition, $\mu_1(u)\ge 0$, then $u=u_{\lambda_*}$.
\end{proposition}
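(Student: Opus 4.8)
The plan is to establish Proposition~\ref{prb7} in four stages: first the convergence and regularity needed to identify $u_{\lambda_*}$ as an element of $\mathcal{S}_r^{\lambda_*}$, then the computation of $\mu_1(u_{\lambda_*})$, and finally the comparison and uniqueness statements. For the first stage, one starts from \eqref{b500}: the function $\lambda\mapsto u_\lambda(x)$ is decreasing on $(0,\lambda_*)$ by Proposition~\ref{prb2}, so the pointwise limit $u_{\lambda_*}(x)$ exists and lies in $[-1,0]$ by Lemma~\ref{L1a}(ii). The key uniform bound is Lemma~\ref{leb4b}: since each $u_\lambda$ with $\lambda\in(0,\lambda_*)$ satisfies $\mu_1(u_\lambda)>0$ by Proposition~\ref{thb5}, we have $\|u_\lambda\|_{H^2} + \int_{\unb}(1+u_\lambda)^{-3}\,\rd x\le C_2$ uniformly in $\lambda$. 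The $H^2$ bound alone does not prevent $u_{\lambda_*}$ from touching $-1$, so the crucial point is to upgrade this to compactness in a space that controls $g(u_\lambda)=(1+u_\lambda)^{-2}$. I would argue as in the proof of Lemma~\ref{leb2b}: the integral bound on $(1+u_\lambda)^{-3}$ forces, via the monotonicity of the profile $\mathfrak u_\lambda$ (Lemma~\ref{leb1}) and the fact that touchdown can only occur at the origin (see \eqref{b4}), a uniform lower bound $u_\lambda\ge -1+\delta$ on an annulus $\unb\setminus\mathbb{B}_{r_1}$ away from the origin, and then a $W_q^4$ interior estimate combined with elliptic regularity up to the boundary. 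The real obstacle — and the only subtle analytic point — is obtaining a \emph{uniform} positive lower bound for $1+u_\lambda$ on all of $\cunb$, i.e. showing $u_{\lambda_*}(0)>-1$; this is exactly where one must exploit the structure, and the natural route is to invoke the stability $\mu_1(u_\lambda)\ge0$ once more through inequality \eqref{b40}, or alternatively to pass to the limit carefully and use that if $u_{\lambda_*}(0)=-1$ then $g(u_\lambda)$ blows up only on a set of vanishing measure while $\lambda\|g(u_\lambda)\|_1\le C_1$ stays bounded by Lemma~\ref{lec1}, which would contradict the elliptic equation \eqref{stat1} producing a nontrivial limit. I would follow whichever of these is cleanest given the earlier lemmas — most likely a direct limiting argument: monotone convergence and the bound $\lambda\|g(u_\lambda)\|_1\le C_1$ give $g(u_{\lambda_*})\in L_1(\unb)$, hence $u_{\lambda_*}>-1$ a.e., and then elliptic regularity bootstraps $u_{\lambda_*}$ to a classical solution with $u_{\lambda_*}>-1$ everywhere.

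Granting that $u_\lambda\to u_{\lambda_*}$ in $C_r^2(\cunb)$ (and in $C^4$ on compact subsets of $\cunb$) with $u_{\lambda_*}\in\mathcal{S}_r^{\lambda_*}$, the second stage is to compute $\mu_1(u_{\lambda_*})$. By weak lower semicontinuity of the Rayleigh quotient in \eqref{b6} together with the convergence $g'(u_\lambda)\to g'(u_{\lambda_*})$, one gets $\mu_1(u_{\lambda_*})\le\liminf_{\lambda\to\lambda_*}\mu_1(u_\lambda)=0$ by Corollary~\ref{C20}. Conversely, $\mu_1(u_{\lambda_*})\ge 0$ follows because $\mu_1(u_\lambda)>0$ for all $\lambda<\lambda_*$ and the monotonicity $\mu_1(u_{\lambda_1})\ge\mu_1(u_{\lambda_2})$ for $\lambda_1\le\lambda_2$ established in the proof of Proposition~\ref{thb5}; more directly, taking any admissible test function $v$ and letting $\lambda\uparrow\lambda_*$ in the inequality defining $\mu_1(u_\lambda)$ shows the quadratic form of $u_{\lambda_*}$ is nonnegative, hence $\mu_1(u_{\lambda_*})\ge0$. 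Combining, $\mu_1(u_{\lambda_*})=0$.

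For the third stage — maximality — let $u\in\mathcal{S}_r^{\lambda_*}$ be arbitrary. The point is that $u$ is, for every $\lambda\in(0,\lambda_*)$, a classical subsolution to \eqref{stat1}-\eqref{stat2} with parameter $\lambda$: indeed $B\Delta^2 u - T\Delta u = -\lambda_* g(u)\le -\lambda g(u)$ since $g(u)\ge 1>0$ (here $-1<u\le0$ by Lemma~\ref{L1a}(ii)) and $\lambda<\lambda_*$. By the maximality property of $u_\lambda$ in Proposition~\ref{prb2}(i) we get $u\le u_\lambda$ in $\unb$ for every $\lambda\in(0,\lambda_*)$, and passing to the limit $\lambda\to\lambda_*$ via \eqref{b500} yields $u\le u_{\lambda_*}$ in $\unb$. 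Finally, if in addition $\mu_1(u)\ge 0$, then since $\mu_1(u_{\lambda_*})=0\ge0$ we may apply the comparison Lemma~\ref{leb4}: viewing $u$ as a subsolution with parameter $\lambda_*$ itself (trivially, with equality) and $u_{\lambda_*}\in\mathcal{S}_r^{\lambda_*}$ with $\mu_1(u_{\lambda_*})=0$, part~(ii) of Lemma~\ref{leb4} forces $u=u_{\lambda_*}$. (Symmetrically, one could apply Lemma~\ref{leb4}(i) with the roles reversed to get $u_{\lambda_*}\le u$ and combine with $u\le u_{\lambda_*}$.) This completes the proof; the only place where genuine work is required is the first stage, specifically ruling out touchdown of the limit at the origin, and I expect that to hinge on the $L_1$ bound for $g(u_\lambda)$ from Lemma~\ref{lec1} together with the Brezis--Merle-type estimate already invoked there.
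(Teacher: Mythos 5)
Your stages 2--4 reproduce the paper's own argument (the $\mu_1(u_{\lambda_*})=0$ identification via Corollary~\ref{C20}, maximality by viewing any $u\in\mathcal{S}_r^{\lambda_*}$ as a subsolution for each $\lambda<\lambda_*$ and letting $\lambda\to\lambda_*$, and uniqueness via Lemma~\ref{leb4}(ii) applied with $u_{\lambda_*}$ as the stable solution). The problem is stage~1, which you yourself single out as the only place where work is needed and then do not actually do. The route you commit to --- ``monotone convergence and the bound $\lambda\|g(u_\lambda)\|_1\le C_1$ give $g(u_{\lambda_*})\in L_1(\unb)$, hence $u_{\lambda_*}>-1$ a.e., and then elliptic regularity bootstraps $u_{\lambda_*}$ to a classical solution with $u_{\lambda_*}>-1$ everywhere'' --- has a genuine gap: $u_{\lambda_*}>-1$ a.e.\ is automatic from $g(u_{\lambda_*})\in L_1$ but says nothing about the single point $x=0$, which by Lemma~\ref{leb1} (non-decreasing profile, minimum at the origin) is precisely the only point where touchdown of the limit can occur, on a set of measure zero. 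You cannot start the bootstrap from ``$>-1$ a.e.''; what is needed is a quantitative statement at the origin, and your alternative suggestion (inequality \eqref{b40}, or a vague contradiction with ``the elliptic equation producing a nontrivial limit'') is not carried out either.

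The missing idea, which is the heart of the paper's proof, is to combine an integrability bound on $g(u_\lambda)$ with a \emph{uniform $C^1$ bound on $u_\lambda$} and to exploit the vanishing rate this imposes at a touchdown point. Concretely, the paper uses Proposition~\ref{thb5} to invoke Lemma~\ref{leb4b}, so that $(g(u_\lambda))_\lambda$ is bounded in $L_{3/2}(\unb)$; elliptic regularity then bounds $(u_\lambda)_\lambda$ in $W^4_{3/2}(\unb)\hookrightarrow C^1(\cunb)$ (recall $d\le 2$). If $u_{\lambda_*}(0)=-1$, then also $\nabla u_{\lambda_*}(0)=0$, so the $C^1$ bound forces $1+u_{\lambda_*}(x)\le C|x|$, i.e.\ $(1+u_{\lambda_*})^{-1}\ge C^{-1}|x|^{-1}$, and hence $g(u_{\lambda_*})^{3/2}\gtrsim |x|^{-3}\notin L_1(\unb)$, contradicting (by Fatou) the $L_{3/2}$ bound; therefore $u_{\lambda_*}>-1$ on $\cunb$, and only then does the bootstrap (uniform lower bound on $1+u_\lambda$, hence $L_\infty$ bound on $g(u_\lambda)$, hence $W^4_q$ bounds and passage to the limit as in Lemma~\ref{leb2b}) give $u_{\lambda_*}\in\mathcal{S}_r^{\lambda_*}$. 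A variant of this pivot could indeed be run with the $C^{3/2}(\cunb)$ and $L_1$ bounds of Lemma~\ref{lec1}, as you hint at the very end, since Lipschitz regularity plus touchdown at $0$ already gives $g(u_{\lambda_*})\gtrsim|x|^{-2}\notin L_1(\unb)$ for $d\le 2$; but in either version the decisive step is the pointwise lower bound $(1+u_{\lambda_*}(x))^{-1}\gtrsim |x|^{-1}$ at the origin, which your write-up never produces. As a minor further point, in stage~2 the inequality $\mu_1(u_{\lambda_*})\le\liminf_\lambda\mu_1(u_\lambda)$ does not follow from ``weak lower semicontinuity'' (an infimum of continuous functionals is upper, not lower, semicontinuous); it follows instead from the monotonicity $u_\lambda\ge u_{\lambda_*}$, which gives $\lambda g'(u_\lambda)\ge\lambda_* g'(u_{\lambda_*})$ and hence $\mu_1(u_\lambda)\ge\mu_1(u_{\lambda_*})$, or from the uniform convergence $\lambda g'(u_\lambda)\to\lambda_* g'(u_{\lambda_*})$ once stage~1 is secured.
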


\begin{proof}
The proof is similar to that of \cite[Theorem~11.4.1]{EGG10}. 
Indeed, the fact that $u_{\lambda_*}$ is well-defined is a simple consequence of Proposition~\ref{prb2} (i). Thanks to Proposition~\ref{thb5}, we are in a position to apply Lemma~\ref{leb4b} and conclude that $(u_\lambda)_\lambda$ is bounded in $H^2(\unb)$ while $(g(u_\lambda))_\lambda$ is bounded in $L_{3/2}(\unb)$. Consequently, the sequence $(u_\lambda)_\lambda$ is bounded in $W^4_{3/2}(\unb)$ by classical elliptic regularity, see \cite[Theorem~2.20]{GGS10} for instance, so that $u_{\lambda_*}\in C^1(\cunb)$ due to the continuous embedding of $W^4_{3/2}(\unb)$ in $C^1(\cunb)$. If the minimum of $u_{\lambda_*}$ in $\unb$ would be equal to $-1$, then $u_{\lambda_*}(0)=-1$ and $\nabla u_{\lambda_*}(0)=0$ according to Lemma~\ref{leb1}. These properties entail that there is $C>0$ such that
$$
\frac{1}{1+u_{\lambda_*}(x)} \ge \frac{C}{|x|}\ ,\quad x\in\cunb\ ,
$$
which contradicts the boundedness of $(g(u_\lambda))_\lambda$ in $L_{3/2}(\unb)$. Therefore, $u_{\lambda_*}>-1$ in $\cunb$ which, together with the above mentioned estimates and classical elliptic regularity, implies $u_{\lambda_*}\in \mathcal{S}_r^{\lambda_*}$. 
That $\mu_1(u_{\lambda_*})= 0$ follows from Corollary~\ref{C20}.

Finally, take $u\in \mathcal{S}_r^{\lambda_*}$. Then, for all  $\lambda\in (0,\lambda_*)$, 
$$
B \Delta^2 u - T \Delta u + \lambda g(u) = (\lambda-\lambda_*) g(u) \le 0 \;\;\text{ in }\;\; \unb\ ,
$$
and thus $u\le u_\lambda$ in $\unb$ by Proposition~\ref{prb2}. Letting $\lambda\to\lambda_*$ gives $u\le u_{\lambda_*}$. The uniqueness assertion is a consequence of Lemma~\ref{leb4} and the maximality of $u_{\lambda_*}$.
\end{proof}

\subsection{Continuation of maximal solutions}\label{sec24}

We shall now completely characterize the radially symmetric solutions to \eqref{stat1}-\eqref{stat2} near $(\lambda_*,u_{\lambda_*})$. 
That is, we show that the curve $\mathcal{A}$ defined in~\eqref{A} non-trivially extends 
the curve $\{(\lambda,u_\lambda)\,:\,\lambda\in(0,\lambda_*)\}$ of maximal solutions established in Subsection~\ref{sec23}, the latter coinciding with the curve $\mathcal{A}_0$ from \eqref{A0} as proven in Corollary~\ref{C20}. Moreover, all radially symmetric solutions to \eqref{stat1}-\eqref{stat2} near $(\lambda_*,u_{\lambda_*})$ lie on this curve, which in particular excludes any bifurcation phenomenon near this point.

Indeed, let us first note  that $(\lambda_*,u_{\lambda_*})$ cannot be the limit of $(\Lambda(s),U(s))$ as $s\rightarrow\infty$ owing to Proposition~\ref{prb7} and Theorem~\ref{T19} (iii). Thus, there is a minimal $s_*\in [1,\infty)$ such that 
\begin{equation}\label{AA}
(\Lambda(s_*),U(s_*))=(\lambda_*,u_{\lambda_*})\ \text{ and }\ \mathcal{A}_0= (\Lambda,U)((0,s_*))\ .
\end{equation}
Next recall from Proposition~\ref{prb7} that $u_{\lambda_*}\in\mathcal{S}_r^{\lambda_*}$ with $\mu_1(u_{\lambda_*})=0$ and it thus readily follows from Lemma~\ref{leb3c} that the kernel of the partial Fr\'echet derivative $F_u(\lambda_*,u_{\lambda_*})$ of the function $F$ defined in \eqref{F} is spanned by a positive function $\phi_*\in C_{D,r}^4(\cunb)$.

Now, a precise description of the behavior of $\mathcal{A}$ near $(\lambda_*,u_{\lambda_*})$ can be obtained from \cite[Theorem 3.2]{CR73} and is stated in the following theorem.

\begin{theorem}\label{T21}
There are $\delta>0$, $\varepsilon>0$, and an injective and continuous function $\zeta$ from $(-\delta,\delta)$ onto $(s_*-\varepsilon,s_*+\varepsilon)$ with the following properties:
\begin{itemize}
\item[(i)] $\zeta(0)=s_*$;
\item[(ii)] $(\Lambda,U)\circ \zeta$ is a real analytic function on $(-\delta,\delta)$;
\item[(iii)] all solutions $(\lambda,u)$ to \eqref{stat1}-\eqref{stat2} near the point $(\lambda_*,u_{\lambda_*})=(\Lambda(s_*),U(s_*))$ lie on the curve $\{(\Lambda,U)\circ\zeta(\sigma)\,:\, \vert \sigma\vert <\delta\}$;
\item[(iv)]  $(\Lambda\circ\zeta)'(0)=0$ and~$(U\circ\zeta)'(0)=\phi_*$.
\end{itemize}
\end{theorem}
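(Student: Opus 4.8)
The plan is to apply the Crandall--Rabinowitz theorem \cite[Theorem 3.2]{CR73} to the real analytic function $F$ defined in \eqref{F} at the point $(\lambda_*,u_{\lambda_*})$, for which we must verify its three hypotheses. First, by Proposition~\ref{prb7} we know $u_{\lambda_*}\in\mathcal{S}_r^{\lambda_*}$ with $\mu_1(u_{\lambda_*})=0$, so $F(\lambda_*,u_{\lambda_*})=0$; by Lemma~\ref{leb3c}(i) (applied with $u=u_{\lambda_*}$ and $\mu_1(u_{\lambda_*})=0$) the kernel of $F_u(\lambda_*,u_{\lambda_*})=1+\lambda_* A^{-1}g'(u_{\lambda_*})$ in $C^1_{D,r}(\cunb)$ is one-dimensional and spanned by a positive function $\phi_*\in C^4_{D,r}(\cunb)$. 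Since $F_u(\lambda_*,u_{\lambda_*})$ was already shown in Subsection~\ref{sec21} to be a Fredholm operator of index $0$, its range is closed and has codimension one, which gives the first structural hypothesis of \cite{CR73}.

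The second hypothesis is the transversality condition $F_{\lambda u}(\lambda_*,u_{\lambda_*})\phi_*\notin \mathrm{range}\,F_u(\lambda_*,u_{\lambda_*})$. Here $F_{\lambda u}(\lambda_*,u_{\lambda_*})\phi_* = A^{-1}g'(u_{\lambda_*})\phi_*$, which by the kernel relation $A\phi_* = -\lambda_* g'(u_{\lambda_*})\phi_*$ equals $-\phi_*/\lambda_*$. To check it is not in the range, I would use the Fredholm alternative: the range of $1+\lambda_* A^{-1}g'(u_{\lambda_*})$ (acting in $L_{2,r}(\unb)$, or equivalently the range of $A+\lambda_* g'(u_{\lambda_*})$) is the orthogonal complement of the kernel of the adjoint, which, because the operator $A+\lambda_* g'(u_{\lambda_*})$ is self-adjoint in $L_{2,r}(\unb)$ with domain $H^4_{D,r}(\unb)$, is again spanned by $\phi_*$. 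Thus $-\phi_*/\lambda_*$ lies in the range iff $\int_{\unb}\phi_*^2\,\rd x=0$, which is false; hence the transversality holds. (One must transfer this computation from the $L_2$ setting back to $C^1_{D,r}(\cunb)$, but this is routine since $A^{-1}$ maps into $H^4_{D,r}(\unb)\hookrightarrow C^1_{D,r}(\cunb)$ and the eigenfunction is smooth.)

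With both hypotheses verified, \cite[Theorem 3.2]{CR73} provides $\delta,\varepsilon>0$, a neighborhood of $(\lambda_*,u_{\lambda_*})$, and a real analytic parametrization $\sigma\mapsto(\lambda(\sigma),u(\sigma))$ with $\sigma\in(-\delta,\delta)$ of all solutions of $F=0$ in that neighborhood, satisfying $(\lambda(0),u(0))=(\lambda_*,u_{\lambda_*})$, $\lambda'(0)=0$, and $u'(0)=\phi_*$. It remains to reconcile this with the global curve $(\Lambda,U)$ from Theorem~\ref{T19}. Since $(\Lambda,U)$ passes through $(\lambda_*,u_{\lambda_*})$ at $s=s_*$ by \eqref{AA}, and by Remark~\ref{RR1} there is near $s_*$ a reparametrization making $(\Lambda,U)$ real analytic, the local uniqueness of the solution set forces the image of $(\Lambda,U)$ near $s_*$ to coincide with the Crandall--Rabinowitz curve; defining $\zeta$ so that $(\Lambda,U)\circ\zeta = (\lambda(\cdot),u(\cdot))$ yields an injective continuous $\zeta:(-\delta,\delta)\to(s_*-\varepsilon,s_*+\varepsilon)$ with the stated properties, after possibly shrinking $\varepsilon$. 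The main obstacle I anticipate is not any single estimate but the bookkeeping of function spaces: verifying that the Crandall--Rabinowitz framework (kernel, range, transversality, index-zero Fredholm property) is consistently carried out in $C^1_{D,r}(\cunb)$ where $F$ lives, while the self-adjointness argument for the range naturally sits in $L_{2,r}(\unb)$, and then matching the local analytic curve to the global one from \cite{BDT00} — in particular checking that $s_*$ is indeed in the ``good'' part of the curve so that the reparametrization $\zeta$ can be chosen as claimed.
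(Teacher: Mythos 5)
There is a genuine gap in the transversality step, and it comes from verifying the wrong hypothesis for the theorem you cite. \cite[Theorem 3.2]{CR73} is the fold (turning point) theorem: besides the one-dimensional kernel and codimension-one range of $F_u(\lambda_*,u_{\lambda_*})$, it requires $F_\lambda(\lambda_*,u_{\lambda_*})=A^{-1}g(u_{\lambda_*})\notin \mathrm{rg}\,F_u(\lambda_*,u_{\lambda_*})$. The condition you check instead, $F_{\lambda u}(\lambda_*,u_{\lambda_*})\phi_*\notin \mathrm{rg}\,F_u(\lambda_*,u_{\lambda_*})$, is the transversality hypothesis of the bifurcation-from-a-simple-eigenvalue theorem, which moreover presupposes a known branch of solutions through the point parametrized by $\lambda$ (a ``trivial'' branch) — something that does not exist at $(\lambda_*,u_{\lambda_*})$, since $u_{\lambda_*}$ solves the equation only for $\lambda=\lambda_*$. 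With only the hypotheses you verify, the cited theorem does not apply, and in particular the fold structure — notably $(\Lambda\circ\zeta)'(0)=0$ in part (iv) — would not follow; for the simple-eigenvalue theorem one does not get $\lambda'(0)=0$ in general. The fix is the same duality computation you already use, just applied to the correct object: if $A^{-1}g(u_{\lambda_*})$ were in the range, there would be $\varphi\in H^4_{D,r}(\unb)$ with $B\Delta^2\varphi-T\Delta\varphi+\lambda_* g'(u_{\lambda_*})\varphi=g(u_{\lambda_*})$, and testing with the positive kernel function $\phi_*$ of the self-adjoint operator $A+\lambda_* g'(u_{\lambda_*})$ yields $0=\int_{\unb} g(u_{\lambda_*})\phi_*\,\rd x>0$, a contradiction. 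This is precisely how the paper argues.

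A secondary, more minor point: your identification ``the range of $1+\lambda_* A^{-1}g'(u_{\lambda_*})$, or equivalently the range of $A+\lambda_* g'(u_{\lambda_*})$'' is not correct as stated; the former is $A^{-1}$ applied to the latter. Consequently the membership test for $-\phi_*/\lambda_*$ is $\int_{\unb} g'(u_{\lambda_*})\phi_*^2\,\rd x=0$ (false since $g'<0$), not $\int_{\unb}\phi_*^2\,\rd x=0$. Your conclusion that $F_{\lambda u}\phi_*$ is not in the range happens to be true, but, as explained above, it is not the hypothesis needed here. The remaining structure of your argument — Fredholm index zero, one-dimensional kernel spanned by $\phi_*$ via Lemma~\ref{leb3c} and Proposition~\ref{prb7}, and matching the local Crandall--Rabinowitz curve with the global curve of Theorem~\ref{T19} through part (iii) of \cite[Theorem 3.2]{CR73} — is in line with the paper.
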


\begin{proof}
 Since $u_{\lambda_*}\in\mathcal{S}_r^{\lambda_*}$ we have $u_{\lambda_*}\in \mathcal{O}$ with $\mathcal{O}$ defined in \eqref{O} and thus the function $F$ is analytic near $(\lambda_*,u_{\lambda_*})$. As the kernel of $F_u(\lambda_*,u_{\lambda_*})$ is one-dimensional, $\mathrm{codim}(\mathrm{rg}(F_u(\lambda_*,u_{\lambda_*})))$ equals 1 since $F_u(\lambda_*,u_{\lambda_*})$ is a Fredholm operator of index 0, see e.g. \cite[Theorem 4.25]{RudinFA}. We now claim that
$$
F_\lambda(\lambda_*,u_{\lambda_*})=A^{-1} g(u_{\lambda_*}) \not\in \mathrm{rg}(F_u(\lambda_*,u_{\lambda_*}))\ .
$$
Indeed, if otherwise there is  $\varphi\in  C_{D,r}^{1 }(\cunb)$ with
$$
A^{-1} g(u_{\lambda_*})=\varphi+ \lambda_* A^{-1} g'(u_{\lambda_*})\varphi\ .
$$
But then $\varphi\in H_{D,r}^4(\unb)$ satisfies
$$
B\Delta^2\varphi-T\Delta\varphi+ \lambda_*  g'(u_{\lambda_*})\varphi= g(u_{\lambda_*})\ \text{ in } \ \unb\ ,
$$
and testing this equation with $\phi_*>0$ yields the contradiction
$$
0=\int_{\unb}\big[B\Delta^2\phi_*-T\Delta \phi_*+\lambda_*  g'(u_{\lambda_*})\phi_* \big]\, \varphi\,\rd x=\int_{\unb} g(u_{\lambda_*})\,\phi_*\,\rd x > 0\ .
$$
Therefore, we are in a position to apply \cite[Theorem 3.2]{CR73} and obtain  in combination with Theorem~\ref{T19} the assertion.
\end{proof}

Actually, the curve $\mathcal{A}$ bends down at  $(\lambda_*,u_{\lambda_*})$:

\begin{corollary}\label{C22}
There holds $(\Lambda\circ\zeta)''(0)<0$.
\end{corollary}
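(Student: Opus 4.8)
The plan is to use the real-analytic parametrization $(\Lambda,U)\circ\zeta$ supplied by Theorem~\ref{T21} and to differentiate the stationary equation twice along this curve at $\sigma=0$. Writing $\lambda(\sigma):=\Lambda\circ\zeta(\sigma)$ and $u(\sigma):=U\circ\zeta(\sigma)$, we know from Theorem~\ref{T21} that these are analytic near $0$ with $\lambda(0)=\lambda_*$, $u(0)=u_{\lambda_*}$, $\lambda'(0)=0$, and $u'(0)=\phi_*$, where $\phi_*>0$ spans the kernel of $F_u(\lambda_*,u_{\lambda_*})$, i.e. $A\phi_*=-\lambda_* g'(u_{\lambda_*})\phi_*$. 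The identity $F(\lambda(\sigma),u(\sigma))=0$ means $Au(\sigma)=-\lambda(\sigma) g(u(\sigma))$ in $\unb$. Since we only want the sign of $\lambda''(0)$, the strategy is to extract it via a solvability (Fredholm) condition by testing against $\phi_*$.

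First I would differentiate $Au(\sigma)+\lambda(\sigma)g(u(\sigma))=0$ once in $\sigma$ to get
\begin{equation*}
A u' + \lambda' g(u) + \lambda g'(u) u' = 0\ ,
\end{equation*}
which at $\sigma=0$ (using $\lambda'(0)=0$) just re-confirms $A\phi_* + \lambda_* g'(u_{\lambda_*})\phi_* = 0$. Differentiating a second time yields
\begin{equation*}
A u'' + \lambda'' g(u) + 2\lambda' g'(u) u' + \lambda g''(u) (u')^2 + \lambda g'(u) u'' = 0\ ,
\end{equation*}
and evaluating at $\sigma=0$ with $\lambda'(0)=0$, $u(0)=u_{\lambda_*}$, $u'(0)=\phi_*$ gives
\begin{equation*}
\big(A + \lambda_* g'(u_{\lambda_*})\big) u''(0) = -\lambda''(0)\, g(u_{\lambda_*}) - \lambda_*\, g''(u_{\lambda_*})\, \phi_*^2\ \text{ in }\ \unb\ ,
\end{equation*}
with $u''(0)\in H^4_{D,r}(\unb)$. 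The next step is to test this identity with $\phi_*$ and integrate over $\unb$: the left-hand side contributes $\int_{\unb}\big(A+\lambda_*g'(u_{\lambda_*})\big)u''(0)\,\phi_*\,\rd x = \int_{\unb} u''(0)\,\big(A+\lambda_*g'(u_{\lambda_*})\big)\phi_*\,\rd x = 0$ by self-adjointness of $A$ (under the clamped boundary conditions, integrating by parts the biharmonic and Laplacian terms, all boundary terms vanishing since $\phi_*$ and $u''(0)$ both satisfy \eqref{hyper2s}) together with $\big(A+\lambda_*g'(u_{\lambda_*})\big)\phi_*=0$. Hence
\begin{equation*}
0 = -\lambda''(0)\int_{\unb} g(u_{\lambda_*})\,\phi_*\,\rd x - \lambda_*\int_{\unb} g''(u_{\lambda_*})\,\phi_*^3\,\rd x\ ,
\end{equation*}
so that
\begin{equation*}
\lambda''(0) = -\,\frac{\lambda_* \displaystyle\int_{\unb} g''(u_{\lambda_*})\,\phi_*^3\,\rd x}{\displaystyle\int_{\unb} g(u_{\lambda_*})\,\phi_*\,\rd x}\ .
\end{equation*}
The conclusion then follows from the signs of the two integrals: $\lambda_*>0$ by Lemma~\ref{L1a}, $g(u_{\lambda_*})>0$ and $\phi_*>0$ in $\unb$ so the denominator is positive; and since $g(\xi)=(1+\xi)^{-2}$ has $g''(\xi)=6(1+\xi)^{-4}>0$ on $(-1,\infty)$, while $\phi_*^3>0$ in $\unb$, the numerator integral $\int_{\unb} g''(u_{\lambda_*})\phi_*^3\,\rd x$ is strictly positive. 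Therefore $\lambda''(0)=(\Lambda\circ\zeta)''(0)<0$.

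The only point requiring a little care — and the mild obstacle — is the justification that the boundary terms vanish when integrating $\int_{\unb}\big(A+\lambda_*g'(u_{\lambda_*})\big)u''(0)\,\phi_*\,\rd x$ by parts to transfer $A$ onto $\phi_*$: one needs $u''(0)\in H^4_{D,r}(\unb)$, which is guaranteed since $u(\sigma)\in\mathcal{O}\subset C^1_{D,r}(\cunb)$ with $u(\sigma)\in\mathcal{S}_r^{\lambda(\sigma)}\subset C^4_{D,r}(\cunb)$ for $\sigma$ near $0$ (by Lemma~\ref{L1a} and Lemma~\ref{L0}) and the map $\sigma\mapsto u(\sigma)$ being analytic into this space by Theorem~\ref{T21}, so that $u'(0)$ and $u''(0)$ inherit the clamped boundary conditions; then both $u''(0)$ and $\phi_*$ satisfy $w=\partial_\nu w=0$ on $\uns$, which kills all boundary contributions from $B\int\Delta^2\cdot\,(\cdot) = B\int\Delta\cdot\,\Delta(\cdot)$ and $-T\int\Delta\cdot\,(\cdot) = T\int\nabla\cdot\,\nabla(\cdot)$. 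This is precisely the scalar product $\langle\cdot,\cdot\rangle$ of Lemma~\ref{leb3}, so the computation can simply be phrased in terms of $\langle u''(0),\phi_*\rangle + \lambda_*\int_{\unb} g'(u_{\lambda_*})u''(0)\phi_*\,\rd x = 0$, which is symmetric in its two arguments and hence equals the same expression with the roles of $u''(0)$ and $\phi_*$ swapped, namely $\langle\phi_*,u''(0)\rangle + \lambda_*\int_{\unb}g'(u_{\lambda_*})\phi_* u''(0)\,\rd x = \int_{\unb}\big[A\phi_* + \lambda_* g'(u_{\lambda_*})\phi_*\big]u''(0)\,\rd x = 0$.
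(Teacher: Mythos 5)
Your proposal is correct and follows essentially the same route as the paper: implicit twice-differentiation of the stationary equation along the analytic curve of Theorem~\ref{T21} at $\sigma=0$, testing the resulting identity with $\phi_*$ (using that $\phi_*$ spans the kernel of $A+\lambda_* g'(u_{\lambda_*})$ and the symmetry of $\langle\cdot,\cdot\rangle$), and concluding from $g''>0$ and $\phi_*>0$. The only cosmetic difference is that the paper differentiates the fixed-point form $u+\lambda A^{-1}g(u)=0$ in $C^1_{D,r}(\cunb)$ -- precisely the space in which Theorem~\ref{T21} gives analyticity, so that $(U\circ\zeta)''(0)\in H^4_{D,r}(\unb)$ follows directly from the differentiated identity -- whereas your justification via analyticity into $C^4_{D,r}(\cunb)$ is slightly more than Theorem~\ref{T21} literally provides; with that small repair the computation and sign argument coincide with the paper's.
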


\begin{proof}
Twice differentiation of the equality
$$
0=F(\Lambda\circ\zeta(\sigma),U\circ\zeta(\sigma))=U\circ\zeta(\sigma)+\Lambda\circ\zeta(\sigma) A^{-1} g(U\circ\zeta(\sigma))
$$
with respect to $\sigma$ at $\sigma=0$ and Theorem~\ref{T21} yield
$$
(\Lambda\circ\zeta)''(0) A^{-1} g(u_{\lambda_*}) +\lambda_* A^{-1} g''(u_{\lambda_*})\phi_*^2=- F_u(\lambda_*, u_{\lambda_*}) (U\circ\zeta)''(0)\ ,
$$
that is,
$$
(\Lambda\circ\zeta)''(0)  g(u_{\lambda_*}) +\lambda_* g''(u_{\lambda_*})\phi_*^2= - A (U\circ\zeta)''(0)-\lambda_* g'(u_{\lambda_*}) (U\circ\zeta)''(0)\ .
$$
Testing this last equation with $\phi_*>0$ and using the convexity of $g$ imply
$$
(\Lambda\circ\zeta)''(0) \int_{\unb} g(u_{\lambda_*})\,\phi_*\,\rd x=-\lambda_*  \int_{\unb}  g''(u_{\lambda_*})\phi_*^3\,\rd x <0
\ ,$$
whence $(\Lambda\circ\zeta)''(0)<0$.
\end{proof}

\subsection{End point}\label{sec25}

The following theorem now completes the picture of the curve $\mathcal{A}$ defined in~\eqref{A}. It characterizes the limit of $(\Lambda(s),U(s))$ as $s\rightarrow\infty$ and shows that for each $\lambda\in (0,\lambda_*)$ there are at least two steady-states.

\begin{theorem}\label{C217}
(i) As $s\rightarrow\infty$, $$(\Lambda(s),U(s))\rightarrow (0,\omega)\ \text{ in }\ \RR\times \big(C(\cunb)\cap C^1(\cunb\setminus\mathbb{B}_\rho)\big)$$ for each $\rho\in (0,1)$, where $\omega\in C^4(\cunb\setminus\{0\})\cap C_r^1(\cunb)$ solves the equation
$$
B \Delta^2 \omega(x) - T \Delta \omega(x)  = 0 \;\;\text{ for }\;\; x\in \mathbb{B}_1\setminus\{0\}\ , 
$$
with boundary conditions
\begin{align*}
\omega(x) = \partial_\nu \omega(x) & = 0 \;\;\text{ for }\;\; x\in  \uns\ , \\
\omega(0)+1 = \nabla \omega(0) & = 0 \ , 
\end{align*}
and satisfies $\omega(x)>-1$ for $0<\vert x\vert <1$.

\medskip

(ii) For each $\lambda\in (0,\lambda_*)$ there are at least two values $0<s_1<s_*<s_2$ with $\Lambda(s_j)=\lambda$, $U(s_j)\in\mathcal{S}_r^\lambda$ for $j=1,2$, and $U(s_2)\le U(s_1)$ in $\unb$ with $U(s_2)\not= U(s_1)$.
\end{theorem}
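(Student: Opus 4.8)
The plan is to establish part~(i) first, by analyzing the limit of $(\Lambda(s),U(s))$ as $s\to\infty$, and then deduce part~(ii) as an almost immediate consequence of part~(i) combined with Corollary~\ref{C22} and the structure of the curve $\mathcal{A}$ already obtained. For part~(i), I would proceed as follows. From Theorem~\ref{T19}~(iii) we know $\min_{\cunb} U(s)\to -1$ as $s\to\infty$, and Lemma~\ref{leb1} (applied to $w=U(s)$, whose right-hand side $-\Lambda(s)g(U(s))$ is nonpositive) gives that this minimum is attained at the origin, so $U(s)(0)\to -1$. I would then use the uniform estimate of Lemma~\ref{lec1}, namely $\|U(s)\|_{H^2}+\|U(s)\|_{C^{3/2}(\cunb)}+\Lambda(s)\|g(U(s))\|_1\le C_1$, which holds since $\Lambda(s)\in[0,\lambda_*]$ and $U(s)\in\mathcal{S}_r^{\Lambda(s)}$. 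This bound provides compactness in $C(\cunb)$, so along a subsequence $U(s)\to\omega$ in $C(\cunb)$ and weakly in $H^2(\unb)$, with $\omega(0)=-1$ and $\omega$ radially symmetric.

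The key point is then to pin down $\Lambda(s)\to 0$ and to identify the equation satisfied by $\omega$ away from the origin. For $\Lambda(s)\to 0$: if $\Lambda(s_n)\to\lambda_\infty>0$ along some subsequence, then on any annulus $\mathbb{B}_1\setminus\mathbb{B}_\rho$ the functions $g(U(s_n))$ would be uniformly bounded (since $U(s_n)$, being monotone in $|x|$ by Lemma~\ref{leb1}, is bounded away from $-1$ on $\{|x|\ge\rho\}$ once $n$ is large --- here one uses $U(s)(0)\to -1$ and the $C^{3/2}$ bound to control how fast $U(s)$ can climb back up), and elliptic regularity on the annulus would give $C^{4}_{loc}(\cunb\setminus\{0\})$ bounds. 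One then shows $\lambda_\infty g(U(s_n))$ must concentrate at the origin and, testing against $\phi_1$ from Lemma~\ref{L0} as in the proof of Lemma~\ref{L1a}, that $\lambda_\infty\|g(U(s_n))\|_1\to 0$ forces a contradiction with $U(s_n)(0)\to-1$; alternatively one argues directly that any $\lambda_\infty>0$ would produce a genuine solution in $\mathcal{S}_r^{\lambda_\infty}$ with minimum $-1$, contradicting Definition~\ref{def}. With $\Lambda(s)\to 0$ in hand, passing to the limit in $B\Delta^2 U(s)-T\Delta U(s)=-\Lambda(s)g(U(s))$ on each annulus $\mathbb{B}_1\setminus\mathbb{B}_\rho$ (where the right-hand side tends to $0$ uniformly) yields $B\Delta^2\omega-T\Delta\omega=0$ in $\mathbb{B}_1\setminus\{0\}$, the clamped conditions on $\uns$ survive the limit since they are linear constraints, $\omega(0)=-1$ and $\nabla\omega(0)=0$ follow from the $C^1$ convergence on annuli together with $\omega$ being radial and $C^1$ up to $0$, and $\omega>-1$ on $0<|x|<1$ follows from the strict monotonicity of the radial profile in Lemma~\ref{leb1}. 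Since such an $\omega$ is uniquely determined (the ODE for the radial profile with these four conditions has a unique solution --- this is essentially the explicit computation alluded to in the statement), the whole net converges, not just subsequences.

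For part~(ii): fix $\lambda\in(0,\lambda_*)$. By Corollary~\ref{C20}, $(\lambda,u_\lambda)=(\Lambda(s),U(s))$ for the unique $s=s_1\in(0,s_*)$, giving the first solution. By Corollary~\ref{C22} the curve bends strictly down at $s_*$, i.e. $(\Lambda\circ\zeta)''(0)<0$, so $\Lambda$ takes values strictly less than $\lambda_*$ on a right-neighborhood of $s_*$; combined with part~(i), $\Lambda(s)\to 0$ as $s\to\infty$, so by the intermediate value theorem applied to the continuous function $\Lambda$ on $[s_*,\infty)$ there is $s_2>s_*$ with $\Lambda(s_2)=\lambda$ (and by taking $s_2$ just past $s_*$ one can even ensure $s_2$ close to $s_*$). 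Then $U(s_2)\in\mathcal{S}_r^\lambda$. To get $U(s_2)\le U(s_1)=u_\lambda$ with $U(s_2)\ne U(s_1)$: since $u_\lambda$ is the maximal solution (Proposition~\ref{prb2}), $U(s_2)\le u_\lambda$ automatically; and $U(s_2)\ne u_\lambda$ because $\mu_1(u_\lambda)>0$ by Proposition~\ref{thb5} whereas $U(s_2)$, lying on the ``upper'' branch beyond $s_*$, cannot equal the maximal solution --- indeed by Proposition~\ref{thb5} any $u\in\mathcal{S}_r^\lambda$ with $\mu_1(u)\ge 0$ equals $u_\lambda$, so it suffices to know $\mu_1(U(s_2))<0$, which holds because the exchange of stability at $s_*$ (signalled by $\mu_1(u_{\lambda_*})=0$ and $(\Lambda\circ\zeta)''(0)<0$) makes the eigenvalue cross zero there; alternatively, if $U(s_2)=u_\lambda$ then $s_2=s_1$ by injectivity of $(\Lambda,U)$ on $(\Lambda,U)^{-1}(S)$ together with the fact that $(\lambda,u_\lambda)\in\mathcal{A}_0\subset S$, contradicting $s_2>s_*>s_1$.

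The main obstacle I expect is the argument that $\Lambda(s)\to 0$ and, intertwined with it, ruling out mass of $g(U(s))$ escaping to the boundary or splitting off a nontrivial limiting reaction term: one must carefully use the radial monotonicity (Lemma~\ref{leb1}), the uniform $C^{3/2}$ bound (Lemma~\ref{lec1}) to localize the singular behaviour of $g(U(s))$ near the origin, and a test-function/capacity-type computation to show that no positive limit voltage is compatible with $U(s)(0)\to -1$. Identifying $\omega$ explicitly and proving uniqueness of the limit (so that the full net, and not merely a subsequence, converges) is then a matter of solving the radial biharmonic--Laplace ODE with the four prescribed conditions, which is routine once the limit equation is established.
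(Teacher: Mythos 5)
Part (ii) of your argument is sound and essentially the paper's: $s_2$ comes from the intermediate value theorem once $\Lambda(s)\to0$ is known, $U(s_2)\le u_\lambda$ follows from maximality (Proposition~\ref{prb2}), and $U(s_2)\neq U(s_1)$ follows from the injectivity of $(\Lambda,U)$ on $(\Lambda,U)^{-1}(S)$ together with $(\lambda,u_\lambda)\in\mathcal{A}_0\subset S$ (your second alternative; the ``exchange of stability'' variant, i.e. $\mu_1(U(s_2))<0$, is not established anywhere and should be dropped). The gaps are in part (i). The first concerns $\Lambda(s)\to0$. Testing against $\phi_1$ only reproduces the bound $\Lambda(s)\int_{\unb}\phi_1 g(U(s))\,\rd x\le m_1$ of Lemma~\ref{L1a}; in fact this quantity converges to $-m_1\int_{\unb}\phi_1\,\omega\,\rd x>0$, so no contradiction with a positive limit voltage can come out of it. Your alternative --- that $\lambda_\infty>0$ would produce an element of $\mathcal{S}_r^{\lambda_\infty}$ with minimum $-1$, contradicting Definition~\ref{def} --- also fails: $g(U(s_n))$ is controlled only in (weighted) $L_1$, so you cannot pass to the limit in the semilinear equation up to the origin; the limit is a priori a singular touchdown profile, and the fact that it does not belong to $\mathcal{S}_r^{\lambda_\infty}$ is no contradiction. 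What is actually needed is the quantitative estimate of Lemma~\ref{lec2}: radial monotonicity and the $H^2$ bound give $\mathfrak{u}(r)\le\mathfrak{u}(0)+C\|\Delta U(s)\|_2\,r^{(4-d)/2}$, hence $\|g(U(s))\|_1\gtrsim I_d(1+U(s)(0))/(1+\|U(s)\|_{H^2}^2)$ with $I_d(z)\to\infty$ as $z\to0$; combined with $\Lambda(s)\|g(U(s))\|_1\le C_1$ from Lemma~\ref{lec1} and $U(s)(0)\to-1$, this forces $\Lambda(s)\to0$. You gesture at a ``capacity-type computation'' but never produce it, and it is precisely the missing step.

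The second, more serious gap is your claim that $\omega>-1$ for $0<|x|<1$ ``follows from the strict monotonicity of the radial profile in Lemma~\ref{leb1}''. Monotonicity of each $U(s)$ does not pass to the limit: a priori $\omega$ may have a dead core, $\omega\equiv-1$ on some ball $\bar{\mathbb{B}}_a$ with $a>0$. This also undermines your derivation of the limit equation and of the $C^1(\cunb\setminus\mathbb{B}_\rho)$ convergence for \emph{every} $\rho\in(0,1)$: the uniform bound on $g(U(s_n))$ on the annulus $\{|x|\ge\rho\}$ that you invoke requires knowing beforehand that $\omega>-1$ there, so for small $\rho$ your argument is circular. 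Excluding $a>0$ is the core of the paper's Lemma~\ref{lec3} (Step 3) and is genuinely delicate: one uses the sign change of $\Delta U(s_n)$ at a radius $r_n$ and shows $r_n\ge r_\star>0$; on the set where $1+U(s_n)\le\sqrt{\Lambda(s_n)}$ one has $B\Delta^2 U(s_n)-T\Delta U(s_n)\le-1$, and two integrations of the radial equation show that this set shrinks to the origin; this yields $H^4$ bounds on $\unb\setminus\bar{\mathbb{B}}_r$ for every $r>0$, and then, if $a>0$, the four vanishing radial derivatives of $\omega$ at $r=a$ combined with an energy identity on $\unb\setminus\bar{\mathbb{B}}_a$ force $\omega$ to be constant, a contradiction. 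None of this machinery appears in your proposal, so part (i) as written is incomplete at its two decisive points.
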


 Note that Theorem~\ref{C217} allows one to compute the end point $\omega$ explicitly in terms of the modified Bessel functions of the first and second kinds for $d=2$ and, respectively, in terms of the exponential function for $d=1$ (cf. Figure~\ref{fig1}).

\medskip

To prove Theorem~\ref{C217}, we first need the following result relating the minimum of a function $w$ to the integrability of $g(w)$. 

\begin{lemma}\label{lec2}
Let $w$ be a radially symmetric function in $H^2_{D,r}(\unb)$ such that $g(w)\in L_1(\unb)$, and let the profile $\mathfrak{w}$ of $w$ defined by $\mathfrak{w}(|x|) := w(x)$ for $x\in\cunb$ be a non-decreasing function on $[0,1]$. Then there is $C_3>0$  such that
\begin{equation}
I_{d}(1+w(0)) \le C_3 \left( 1 + \| w \|_{H^2}^2 \right) \| g(w)\|_1\ , \label{c6}
\end{equation} 
where $d=1,2$ and
$$
I_{1}(z)  : =   \frac{1}{z^{4/3}} - 1 \ , \qquad I_{2}(z) : = -\ln{z}\ , \qquad z\in (0,1)\ .
$$
\end{lemma}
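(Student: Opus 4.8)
The plan is to exploit the monotonicity of the profile $\mathfrak{w}$ together with the one-dimensional Sobolev-type information carried by the radial part of the $H^2$-norm. Since $\mathfrak{w}$ is non-decreasing on $[0,1]$ with $\mathfrak{w}(0)=w(0)$, for every $r\in(0,1)$ we have $1+w(0)\le 1+\mathfrak{w}(r)$, and the profile is controlled from above near the origin by the deviation of $\mathfrak{w}$ from its minimum value. The key elementary fact to establish first is a radial interpolation estimate: for $x$ near $0$,
\begin{equation*}
\mathfrak{w}(r)-\mathfrak{w}(0)=\int_0^r \mathfrak{w}'(\rho)\,\mathrm{d}\rho\le \Big(\int_0^r \rho^{1-d}\,\mathrm{d}\rho\Big)^{1/2}\Big(\int_0^r \rho^{d-1}\mathfrak{w}'(\rho)^2\,\mathrm{d}\rho\Big)^{1/2}\le C\,\eta(r)\,\|w\|_{H^2}\ ,
\end{equation*}
where $\eta(r)=r^{1/2}$ if $d=1$ and $\eta(r)=(-\ln r)^{1/2}$ (or $r^{1/2-\delta}$, whichever is cleaner) if $d=2$; here one uses $\int_{\unb}|\nabla w|^2\,\mathrm{d}x=|\uns|\int_0^1\rho^{d-1}\mathfrak{w}'(\rho)^2\,\mathrm{d}\rho\le\|w\|_{H^2}^2$. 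This quantifies how fast $\mathfrak{w}$ can rise away from its minimum.

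Next I would combine this with the integrability of $g(w)$. Pick a radius $r_0\in(0,1)$, depending on $1+w(0)$, such that $1+\mathfrak{w}(r)\le 2(1+w(0))$ on $[0,r_0]$; the interpolation estimate above shows one may take $r_0$ comparable to $\big((1+w(0))/(1+\|w\|_{H^2})\big)^2$ when $d=1$ and correspondingly (exponentially small) when $d=2$. On $[0,r_0]$ we then have $g(\mathfrak{w}(r))=(1+\mathfrak{w}(r))^{-2}\ge \tfrac14(1+w(0))^{-2}$, so that
\begin{equation*}
\|g(w)\|_1=|\uns|\int_0^1 (1+\mathfrak{w}(r))^{-2}r^{d-1}\,\mathrm{d}r\ge \frac{|\uns|}{4(1+w(0))^{2}}\int_0^{r_0}r^{d-1}\,\mathrm{d}r=\frac{c\,r_0^{d}}{(1+w(0))^{2}}\ .
\end{equation*}
Substituting the lower bound for $r_0$ yields, in the case $d=1$, $\|g(w)\|_1\gtrsim (1+w(0))^{-2}\cdot (1+w(0))^2/(1+\|w\|_{H^2})^2$ — this is the wrong homogeneity, so one must be slightly more careful: instead of a single cutoff radius one integrates the pointwise bound $g(\mathfrak{w}(r))\ge (1+w(0)+C\eta(r)\|w\|_{H^2})^{-2}$ directly and computes $\int_0^1 (1+w(0)+C r^{1/2}\|w\|_{H^2})^{-2}\,\mathrm{d}r$ explicitly (it behaves like $(1+w(0))^{-4/3}\|w\|_{H^2}^{-2/3}$ up to constants for $d=1$, and like $(1+w(0))^{-1}|\ln(1+w(0))|\,\|w\|_{H^2}^{-1}$-type expressions for $d=2$). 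Matching the leading singular term as $1+w(0)\to0$ produces exactly $I_1(1+w(0))\le C(1+\|w\|_{H^2}^2)\|g(w)\|_1$, respectively $I_2$.

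The main obstacle is getting the precise exponents and the logarithm in $d=2$ right: one must carry out the one-dimensional integral $\int_0^1 (a+b\,\eta(r))^{-2}r^{d-1}\,\mathrm{d}r$ with $a=1+w(0)$, $b\sim\|w\|_{H^2}$ carefully, isolating the term that blows up as $a\to0$ and absorbing everything else into the constant or into the factor $(1+\|w\|_{H^2}^2)$. For $d=1$ this is an elementary rational integral giving the $a^{-4/3}$ behaviour; for $d=2$ the weight $r^{d-1}=r$ tames the integral and the substitution $r=e^{-t}$ (or bounding $\eta(r)\le r^{-\delta}$) converts it into the logarithmic growth $-\ln a$. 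Everything else — the Cauchy–Schwarz radial estimate, the reduction to the profile, the use of $\|\nabla w\|_2\le\|w\|_{H^2}$ — is routine. I would also record that it suffices to prove the bound when $1+w(0)$ is small (say $\le 1/2$), the complementary range being trivial since then $I_d(1+w(0))$ is bounded while $\|g(w)\|_1$ has a positive lower bound independent of $w$ is \emph{not} available — instead, when $1+w(0)\ge 1/2$ one simply notes $I_d(1+w(0))\le I_d(1/2)$ is an absolute constant and $\|g(w)\|_1\ge |\unb|\,(1+\|w\|_\infty)^{-2}$, which together with $\|w\|_\infty\le C\|w\|_{H^2}$ and the fact that $\|w\|_{H^2}\ge$ (something) is awkward; cleaner is to keep the pointwise integration argument uniform in $1+w(0)\in(0,1)$, which the explicit integral naturally does.
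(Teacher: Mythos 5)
Your overall strategy is the same as the paper's (monotone profile, a radial modulus-of-continuity estimate from the $H^2$ bound, then a pointwise lower bound on $g(\mathfrak{w})$ integrated in $r$), but the key estimate you propose is too weak, and this is a genuine gap. You apply Cauchy--Schwarz once, directly to $\mathfrak{w}'$, using only the weighted $L_2$ bound on $\nabla w$. For $d=1$ this yields $\mathfrak{w}(r)-\mathfrak{w}(0)\le C\,r^{1/2}\|w\|_{H^2}$, and then the integral you need to compute is $\int_0^1\bigl(a+b\,r^{1/2}\bigr)^{-2}\,\mathrm{d}r$ with $a=1+w(0)$, $b\sim\|w\|_{H^2}$; an explicit computation gives $\frac{2}{b^2}\bigl[\ln\frac{a+b}{a}+\frac{a}{a+b}-1\bigr]$, which diverges only \emph{logarithmically} as $a\to0$, not like $a^{-4/3}b^{-2/3}$ as you assert. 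So this route cannot produce $I_1(z)=z^{-4/3}-1$. For $d=2$ the situation is worse: the factor $\bigl(\int_0^r\rho^{1-d}\,\mathrm{d}\rho\bigr)^{1/2}=\bigl(\int_0^r\rho^{-1}\,\mathrm{d}\rho\bigr)^{1/2}$ is infinite, so your $\eta(r)=(-\ln r)^{1/2}$ is not obtainable this way, and the fallback H\"older rate $r^{1/2-\delta}$ (or any $r^{1-\delta}$ from $H^2\hookrightarrow C^{0,\alpha}$) makes $\int_0^1\bigl(a+b\,r^{\alpha}\bigr)^{-2}r\,\mathrm{d}r$ remain \emph{bounded} as $a\to0$ when $\alpha<1$, so no $-\ln a$ divergence can be extracted and the conclusion $I_2(1+w(0))\le C_3(1+\|w\|_{H^2}^2)\|g(w)\|_1$ does not follow.

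The missing idea is to exploit the second-order part of the norm through the radial structure: write $r^{d-1}\partial_r\mathfrak{w}(r)=\int_0^r\partial_s\bigl(s^{d-1}\partial_s\mathfrak{w}(s)\bigr)\,\mathrm{d}s$ and apply Cauchy--Schwarz with the weight $s^{d-1}$, noting that $s^{1-d}\partial_s\bigl(s^{d-1}\partial_s\mathfrak{w}(s)\bigr)$ is the radial Laplacian, hence controlled in the weighted $L_2$ norm by $\|\Delta w\|_2$. Together with the monotonicity (so that $\partial_r\mathfrak{w}\ge0$) this gives $0\le\partial_r\mathfrak{w}(r)\le C\|\Delta w\|_2\,r^{(2-d)/2}$ and, after one more integration, $\mathfrak{w}(r)\le\mathfrak{w}(0)+C\|\Delta w\|_2\,r^{(4-d)/2}$, i.e.\ growth $r^{3/2}$ for $d=1$ and $r$ for $d=2$. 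These exact exponents are what produce $I_d$: restricting the integral of $g(\mathfrak{w})$ to the annulus $\varrho\le|x|\le1$ with $\varrho:=(1+w(0))^{2/(4-d)}$, one has $1+w(0)\le|x|^{(4-d)/2}$ there, so $g(w)\ge\bigl(1+C\|\Delta w\|_2\bigr)^{-2}|x|^{-(4-d)}$, and $\int_\varrho^1 r^{2d-5}\,\mathrm{d}r$ is exactly (up to constants) $I_d(1+w(0))$. Your integration-in-$r$ idea and the absorption of $\|\Delta w\|_2$ into the factor $1+\|w\|_{H^2}^2$ are fine; it is the modulus estimate that must be upgraded as above before the rest of your plan goes through.
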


\begin{proof}
Since $\mathfrak{w}$ is non-decreasing on $[0,1]$, we deduce from the Cauchy-Schwarz inequality that, for $r\in [0,1]$,
\begin{align*}
0 \le r^{d-1} \partial_r \mathfrak{w}(r) & = \int_0^r \partial_s \left( s^{d-1} \partial_s \mathfrak{w}(s) \right)\ \mathrm{d}s \\
& \le \sqrt{\frac{r^d}{d}} \left( \int_0^r \left| \frac{1}{s^{d-1}} \partial_s \left( s^{d-1} \partial_s \mathfrak{w}(s) \right) \right|^2 s^{d-1}\ \mathrm{d}s \right)^{1/2} \\
& \le \frac{r^{d/2}}{\sqrt{d |\uns|}} \|\Delta w\|_2\ .
\end{align*}
Then $0 \le \partial_r \mathfrak{w}(r) \le C \|\Delta w\|_2 r^{(2-d)/2}$ and integrating once more with respect to $r$ gives 
$$\mathfrak{w}(r) \le \mathfrak{w}(0) + C \|\Delta w\|_2\ r^{(4-d)/2}\ ,\quad r\in [0,1]\ .
$$ 
Consequently,
$$
\int_{\unb} g(w(x)) \mathrm{d}x \ge \int_{\unb} g\left( w(0) + C \|\Delta w\|_2 \ |x|^{(4-d)/2} \right)\ \mathrm{d}x\ .
$$
Setting $\varrho := (1+w(0))^{2/(4-d)}$ and restricting the integral on the right-hand side of the above inequality to $\unb\setminus\mathbb{B}_\varrho$, we obtain
\begin{align*}
\int_{\unb} g(w(x)) \mathrm{d}x & \ge \int_{\unb\setminus\mathbb{B}_\varrho} \frac{1}{\left( 1+ w(0) + C \|\Delta w\|_2 \ |x|^{(4-d)/2} \right)^{2}}\ \mathrm{d}x \\
& \ge \int_{\unb\setminus\mathbb{B}_\varrho} \frac{1}{\left( 1 + C \|\Delta w\|_2 \right)^{2}}\ \frac{\mathrm{d}x}{|x|^{(4-d)}} \\
& \ge \frac{C}{\left( 1+\|\Delta w\|_2^2 \right)}\ I_{d}(1+w(0))\ ,
\end{align*}
whence \eqref{c6}.
\end{proof}

\begin{lemma}\label{lec3}
Let $(\lambda_n)_{n\ge 1}$ be a sequence of real numbers in $[0,\lambda_*]$ and $(v_n)_{n\ge 1}$ be such that $v_n\in\mathcal{S}_r^{\lambda_n}$ for each $n\ge 1$. If 
\begin{equation}
\lim_{n\to \infty} \min_{\cunb} v_n = -1\ , \label{c7}
\end{equation}
then there are a subsequence of $(\lambda_n, v_n)_{n\ge 1}$ (not relabeled) and $\omega\in C_r^4(\cunb\setminus\{0\}) \cap C_r(\cunb)$ such that $\omega$ solves
\begin{align}
B \Delta^2 \omega(x) - T \Delta \omega(x) & = 0 \;\;\text{ for }\;\; x\in \mathbb{B}_1\setminus\{0\}\ , \label{c12} \\
\omega(x) = \partial_\nu \omega(x) & = 0 \;\;\text{ for }\;\; x\in  \uns\ , \label{c12b}
\end{align}
and
\begin{align}
\lim_{n\to\infty} \lambda_n & = 0 \ , \label{c8} \\
\lim_{n\to\infty} \left\| v_n - \omega \right\|_{C^{1+\alpha}(\cunb)} & = = 0\ , \qquad \alpha\in [0,1/2)\ , \label{c9} \\ 
v_n \rightharpoonup \omega &\ \text{ in }\;\; H^2(\unb)\ , \label{c10} \\
\omega(0) = -1  &\ \text{ and }\;\; \omega(x)>-1 \;\;\text{ for }\;\; x\in \mathbb{B}_1\setminus\{0\}\ . \label{c11} 
\end{align}
\end{lemma}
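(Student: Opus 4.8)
The plan is to exploit the uniform bounds on $\mathcal{S}_r^\lambda$ established in Lemma~\ref{lec1} (which hold for \emph{all} $\lambda\in[0,\lambda_*]$ and $u\in\mathcal{S}_r^\lambda$, without any stability assumption) to extract a convergent subsequence, and then to identify the limit using the singularity-formation hypothesis \eqref{c7} together with Lemma~\ref{lec2}. First I would record that, by Lemma~\ref{lec1}, the sequence $(v_n)_{n\ge1}$ is bounded in $H^2_{D,r}(\unb)$ and in $C^{3/2}_{D,r}(\cunb)$, and $\lambda_n\|g(v_n)\|_1\le C_1$. Since $[0,\lambda_*]$ is compact and $C^{3/2}(\cunb)$ embeds compactly into $C^{1+\alpha}(\cunb)$ for every $\alpha\in[0,1/2)$, after passing to a subsequence we get $\lambda_n\to\lambda_\infty\in[0,\lambda_*]$, $v_n\rightharpoonup\omega$ in $H^2_{D,r}(\unb)$, and $v_n\to\omega$ in $C^{1+\alpha}(\cunb)$ for each such $\alpha$; this already gives \eqref{c9}, \eqref{c10}, the boundary conditions \eqref{c12b}, and $\omega\in C^1_r(\cunb)$, while the hypothesis \eqref{c7} forces $\min_{\cunb}\omega=-1$, hence $\omega(0)=-1$ by Lemma~\ref{leb1} (the profile $\mathfrak{w}$ is non-decreasing, being a limit of non-decreasing profiles).

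Next I would prove \eqref{c8}, i.e. $\lambda_\infty=0$. Each $v_n$ satisfies $Av_n=-\lambda_n g(v_n)$; since $\min_{\cunb}v_n\to-1$ and $\mathfrak{v}_n$ is non-decreasing, $g(v_n(0))=g(\min_{\cunb}v_n)\to+\infty$, so $\|g(v_n)\|_1$ is \emph{not} bounded (one can see this directly, or combine $\lambda_n\|g(v_n)\|_1\le C_1$ with the fact that if $\lambda_n$ stayed bounded below then $\|g(v_n)\|_1$ would be bounded, contradicting the blow-up of $g(v_n(0))$ and, more carefully, via Lemma~\ref{lec2} applied to $w=v_n$: $I_d(1+v_n(0))\le C_3(1+\|v_n\|_{H^2}^2)\|g(v_n)\|_1\le C_3(1+C_1^2)C_1/\lambda_n$, and the left-hand side tends to $+\infty$ because $v_n(0)\to-1$, which forces $\lambda_n\to0$). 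The cleanest route is this last one: the uniform $H^2$ bound, the bound $\lambda_n\|g(v_n)\|_1\le C_1$, and $v_n(0)\to-1$ plug into \eqref{c6} to yield $\lambda_n I_d(1+v_n(0))\le C$, and since $I_d(1+v_n(0))\to\infty$ we conclude $\lambda_n\to0$, which is \eqref{c8}.

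Then I would address the equation for $\omega$ away from the origin. Fix $\rho\in(0,1)$. On $\cunb\setminus\mathbb{B}_\rho$ we have $v_n\ge -1+\kappa_\rho$ for some $\kappa_\rho>0$ and all large $n$ (because $\mathfrak{v}_n$ is non-decreasing and $v_n\to\omega$ uniformly with $\omega>-1$ on $\{\rho\le|x|\le1\}$ — here one needs $\omega(x)>-1$ for $x\ne0$, which I address below). Hence $g(v_n)$ is bounded in $L_\infty(\unb\setminus\mathbb{B}_\rho)$ and, since $\lambda_n\to0$, the right-hand side $-\lambda_n g(v_n)\to0$ in $L_\infty(\unb\setminus\mathbb{B}_\rho)$. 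Interior elliptic regularity for the fourth-order operator $A=B\Delta^2-T\Delta$ on annular subdomains, combined with the uniform $H^2$ bound, gives that $(v_n)$ is bounded in $W^4_q(\mathbb{B}_{\rho'}^c\cap\unb)$ locally, hence relatively compact in $C^4_{\mathrm{loc}}(\cunb\setminus\{0\})$ after a further diagonal subsequence; passing to the limit in $Av_n=-\lambda_n g(v_n)$ yields $A\omega=0$ in $\unb\setminus\{0\}$ and $\omega\in C^4_r(\cunb\setminus\{0\})$, which is \eqref{c12}. The main obstacle is the last assertion of \eqref{c11}, namely $\omega(x)>-1$ for $0<|x|<1$: a priori $\omega$ could touch $-1$ on an interval $[0,r_1]$. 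To rule this out I would argue by contradiction: if $\omega\equiv-1$ on $\overline{\mathbb{B}}_{r_1}$ for some $r_1\in(0,1)$, then for $x$ near $\partial\mathbb{B}_{r_1}$ one has, using $\omega\in C^1$, $\nabla\omega=0$ on $\mathbb{B}_{r_1}$ and thus $1+\omega(x)\le C|x-x_0|$ type bounds near that interface producing non-integrability of $g(\omega)$ on an annulus — but $g(v_n)$ converges (at least in $L^1_{\mathrm{loc}}(\cunb\setminus\{0\})$, by dominated convergence where $\omega>-1$, and by Fatou where it is not) in a way incompatible with the uniform bound $\lambda_n\|g(v_n)\|_1\le C_1$ unless $\lambda_n\to0$ fast enough; the sharper tool is again Lemma~\ref{lec2}, or a localized version of it on annuli, showing that a flat piece of $\omega$ at level $-1$ of positive radius forces $\|g(v_n)\|_1\to\infty$ faster than $1/\lambda_n$, the contradiction. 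Once $\omega>-1$ on $\unb\setminus\{0\}$ is known, the regularity $\omega\in C(\cunb)$ with $\omega(0)=-1$ follows from the uniform modulus of continuity (the $C^{1+\alpha}$ convergence is global on $\cunb$), and $\nabla\omega(0)=0$ follows from $\nabla v_n(0)=0$ for all $n$ together with the $C^1(\cunb)$ convergence, completing \eqref{c11} and \eqref{c12b}.

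The step I expect to be the true crux is establishing $\omega(x)>-1$ for all $x\in\unb\setminus\{0\}$, equivalently ruling out that the touchdown set of the limit is a whole ball rather than the single point $\{0\}$; everything else is a fairly standard compactness-plus-elliptic-regularity routine built on the already-proven uniform estimates of Lemma~\ref{lec1} and the structural Lemmas~\ref{leb1} and \ref{lec2}. I would handle that crux via a careful local lower bound on $1+v_n$ near any putative flat interface — using the non-decreasing profile to get $1+\mathfrak{v}_n(r)\le 1+\mathfrak{v}_n(0)+C\|\Delta v_n\|_2\,r^{(4-d)/2}$ exactly as in the proof of Lemma~\ref{lec2}, but now the point is that on the set where $\mathfrak{v}_n$ is close to $-1$ over a fixed-length $r$-interval, $g(\mathfrak{v}_n)$ integrates (against $r^{d-1}\,\mathrm{d}r$) to something that, multiplied by $\lambda_n$, would still have to stay $\le C_1$, forcing $\lambda_n$ to decay; then one compares with the rate at which $\lambda_n$ can decay given that $v_n$ still solves the nonlinear equation — a quantitative balance that pins the flat set down to $\{0\}$.
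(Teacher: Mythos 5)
Your compactness step, the derivation of the boundary conditions and of $\omega(0)=-1$, and the proof of \eqref{c8} via Lemma~\ref{lec2} combined with the uniform bounds of Lemma~\ref{lec1} coincide with the paper's Steps 1 and 2, and passing to the limit in the equation on annuli where $\omega>-1$ is also how the paper obtains \eqref{c12} away from the flat set $\{\omega=-1\}=\bar{\mathbb{B}}_a$. The genuine gap is exactly at the point you yourself flag as the crux, namely showing $a=0$: the argument you sketch cannot produce a contradiction. The only global information you invoke is $\lambda_n\|g(v_n)\|_1\le C_1$ together with the upper bound on $\mathfrak{v}_n$ from the proof of Lemma~\ref{lec2}, and these are perfectly consistent with $\omega\equiv-1$ on a ball of positive radius: they merely force $\lambda_n\lesssim\big(\max_{\bar{\mathbb{B}}_a}(1+v_n)\big)^2\to 0$, which you already know. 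There is no a priori lower bound on $\lambda_n$, and nothing in Lemmas~\ref{lec1}, \ref{leb1} or \ref{lec2} controls from below ``the rate at which $\lambda_n$ can decay''; note also that you cannot use integrability of $g(\omega)$ or the bound of Lemma~\ref{leb4b}, since the latter requires $\mu_1\ge 0$, which is not assumed here. So the ``quantitative balance'' you appeal to has no content as stated.

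The paper's Step 3 uses the equation in a much more structural way, and its three ingredients are absent from your sketch. First, the sign-change radius $r_n$ of $\Delta v_n$ from \eqref{b5} is shown to stay bounded below by some $r_\star>0$ (otherwise $\Delta\omega=0$ a.e., hence $\omega\equiv 0$, contradicting $\omega(0)=-1$). Second, setting $\sigma_n:=\sup\{r<r_\star:\ \mathfrak{v}_n(r)<-1+\sqrt{\lambda_n}\}$, one has $\lambda_n g(v_n)\ge 1$ and $\Delta v_n\ge 0$ on $\mathbb{B}_{\sigma_n}$, and two radial integrations of $B\Delta^2 v_n-T\Delta v_n\le -1$ yield, in the limit, a strictly positive lower bound for $Br^{d-1}\partial_r\mathfrak{v}(r)$ on part of the flat set, contradicting flatness; hence $\sigma_n\to 0$, which gives $\lambda_n g(v_n)\le 1$ outside any fixed ball and therefore uniform $H^4$ bounds on annuli, so that $\omega\in H^4_D(\unb\setminus\bar{\mathbb{B}}_r)$ for every $r>0$. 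Third, if $a>0$, this regularity forces $1+\mathfrak{v}$, $\partial_r\mathfrak{v}$, $\partial_r^2\mathfrak{v}$, $\partial_r^3\mathfrak{v}$ to vanish at $r=a$, and multiplying \eqref{c122} by $\omega$ and integrating over $\unb\setminus\bar{\mathbb{B}}_a$ gives $\int_{\unb\setminus\bar{\mathbb{B}}_a}\left(B|\Delta\omega|^2+T|\nabla\omega|^2\right)\rd x=0$, so $\omega$ is constant on the annulus, which is impossible since $\omega=0$ on $\uns$ and $\omega=-1$ at $|x|=a$. Without these steps the conclusion $\omega>-1$ on $\unb\setminus\{0\}$ — and hence \eqref{c11} and \eqref{c12} on all of $\mathbb{B}_1\setminus\{0\}$ — is not established.
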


\begin{proof}
{\bf Step 1: Compactness.} By Lemma~\ref{lec1} and the finiteness of $\lambda_*$, $(v_n)_n$ is bounded in $H^2(\unb)\cap C^{3/2}(\cunb)$ and $(\lambda_n)_n$ is bounded in $[0,\lambda_*]$. The compactness of the embedding of $ C^{3/2}(\cunb)$ in $C^{1+\alpha}(\cunb)$ for $\alpha\in [0,1/2)$ guarantees that, after possibly extracting a subsequence, we may assume that there are $\lambda_\infty\in [0,\lambda_*]$ and $\omega\in H^2(\unb)\cap C^{1+\alpha}(\cunb)$ for $\alpha\in [0,1/2)$ such that $\lambda_n\to \lambda_\infty$ as $n\to\infty$ and \eqref{c9} and \eqref{c10} hold true. Combining \eqref{c7} and \eqref{c9} readily gives
\begin{equation}
\omega(0) = -1 \,  ,\quad \nabla\omega(0)=0\, ,\;\; \text{ and }\;\; \omega=\partial_\nu \omega = 0 \;\;\text{ on }\;\; \uns\ . \label{c14}
\end{equation} 
In addition, by Lemma~\ref{leb1}, $v_n$ is radially symmetric with a non-decreasing profile $\mathfrak{v}_n$ defined by $\mathfrak{v}_n(|x|) := v_n(x)$ for $x\in\cunb$. Consequently, the function $\omega$ enjoys the same properties by \eqref{c9} and its profile $\mathfrak{v}$, defined by $\mathfrak{v}(|x|) := \omega(x)$ for $x\in\cunb$, is a non-decreasing function on $[0,1]$. Therefore, it follows from this property and~\eqref{c14} that there is $a\in [0,1)$ such that  
\begin{equation}
\omega(x) = -1 \;\;\text{ for }\;\; x\in\bar{\mathbb{B}}_a\;\; \text{ and }\;\; \omega(x)>-1 \;\;\text{ for }\;\; x\in \mathbb{B}_1\setminus\bar{\mathbb{B}}_a\ . \label{c111} \\
\end{equation}
In addition, if $a>0$, then
\begin{equation}
\partial_\nu \omega(x) = 0 \;\;\text{ for }\;\; x\in  \partial\mathbb{B}_a\ . \label{c13}
\end{equation}

{\bf Step 2: Identification of $\lambda_\infty$.}  To this end, we apply Lemma~\ref{lec2} and use Lemma~\ref{lec1} to obtain
\begin{equation}
\lambda_n I_d(1+v_n(0)) \le C_3 \left( 1 + \|v_n\|_{H^2}^2 \right) \lambda_n \|g(v_n)\|_1 \le C_1 C_3 (1+C_1^2)\ , \label{c15}
\end{equation}
which also reads
$$
\lambda_n \left( \frac{1 - (1+v_n(0))^{4/3}}{(1+v_n(0))^{4/3}} \right) \le C \;\;\text{ if }\;\; d=1 \;\;\text{ and }\;\; \lambda_n |\ln{(1+v_n(0))}| \le C \;\;\text{ if }\;\; d=2\ .
$$
Letting $n\to\infty$ in the above inequality readily gives $\lambda_\infty=0$ by \eqref{c7}, whence \eqref{c8}.

Next, fix $\varrho\in (a,1)$. We infer from \eqref{c9} and \eqref{c111} that  $(g(v_n))_{n\ge 1}$ is bounded in $L_\infty(\unb\setminus\bar{\mathbb{B}}_\varrho)$, so that $(\lambda_n g(v_n))_{n\ge 1}$ converges to zero in $L_\infty(\unb\setminus\bar{\mathbb{B}}_\varrho)$ as $n\to\infty$ by \eqref{c8}. Classical elliptic regularity estimates then allow us to pass to the limit as $n\to\infty$ and conclude that $\omega\in C^4(\cunb\setminus\mathbb{B}_\varrho)$ satisfies $B \Delta^2 \omega(x) - T \Delta \omega(x) = 0$ for all $x\in \unb \setminus \bar{\mathbb{B}}_\varrho$ and \eqref{c12b}. Since $\varrho$ is arbitrary in $(a,1)$, we have shown 
\begin{equation}
B \Delta^2 \omega(x) - T \Delta \omega(x)  = 0 \;\;\text{ for }\;\; x\in \mathbb{B}_1\setminus\bar{\mathbb{B}}_a\ . \label{c122} 
\end{equation}

{\bf Step 3: Identification of $a$.} The final step is to prove that the yet unknown number $a$  is equal to zero. 

Let $n\ge 1$. According to \eqref{b5}, there is $r_n\in (0,1)$ such that
\begin{equation}
\Delta v_n < 0 \;\;\text{ in }\;\; \cunb\setminus\bar{\mathbb{B}}_{r_n} \;\;\text{ and }\;\; \Delta v_n > 0 \;\;\text{ in }\;\; \mathbb{B}_{r_n}\ . \label{c20}
\end{equation} 
The boundary conditions for $v_n$ and Lemma~\ref{lec1} then imply 
\begin{align}
\int_{\unb\setminus\bar{\mathbb{B}}_{r_n}} |\Delta v_n(x)|\ \mathrm{d}x & = - \int_{\unb\setminus\bar{\mathbb{B}}_{r_n}} \Delta v_n(x)\ \mathrm{d}x = -\int_{\unb} \Delta v_n(x)\ \mathrm{d}x + \int_{\mathbb{B}_{r_n}} \Delta v_n(x)\ \mathrm{d}x \nonumber \\
& \le \sqrt{|\mathbb{B}_{r_n}|}\, \|\Delta v_n\|_2 \le C_1 \sqrt{|\mathbb{B}_{r_n}|}\ . \label{c21}
\end{align}
Assume for contradiction that there is a subsequence $(r_{n_k})_k$ of $(r_n)_n$ such that $r_{n_k}\to 0$ as $k\to\infty$. It readily follows from \eqref{c21} that $(\Delta v_{n_k})_k$ converges to zero in $L_1(\unb\setminus \bar{\mathbb{B}}_\varrho)$ for all $\varrho\in (0,1)$. Recalling \eqref{c10}, we deduce that $\Delta \omega = 0$ almost everywhere in $\unb$ which, together with \eqref{c12b}, implies $\omega\equiv 0$ in $\unb$ and contradicts \eqref{c111}. Therefore, there is $r_\star>0$ such that
\begin{equation}
r_n \ge r_\star>0\ , \qquad n\ge 1\ . \label{c22}
\end{equation} 

Now, for $n\ge 1$ and $x\in\cunb$, we set $\mathfrak{v}_n(|x|)=v_n(x)$, $\mathfrak{w}_n(|x|) := \Delta v_n(x)$, and define
$$
\sigma_n := \sup{\left\{ r\in (0,r_\star)\ :\ \mathfrak{v}_n(r) < - 1 + \sqrt{\lambda_n} \right\}}
$$
if the set is non-empty, and $\sigma_n=0$ otherwise. Since $\sigma_n\in [0,r_\star]$, we may assume, after possibly extracting a subsequence, that
\begin{equation}
\lim_{n\to\infty} \sigma_n = \sigma \in [0,r_\star]\ . \label{c23}
\end{equation}
Assume  $\sigma>0$ for contradiction. The definition of $\sigma_n$, \eqref{c9}, \eqref{c20}, and \eqref{c22} then ensure that 
\begin{equation}
\omega(x)= -1 \;\;\text{ for }\;\; x\in\mathbb{B}_\sigma \;\;\text{ and }\;\; \Delta v_n(x) \ge 0 \;\;\text{ for }\;\; x\in\mathbb{B}_{\sigma_n}\ . \label{c24}
\end{equation}
Since $1+v_n(x)\le \sqrt{\lambda_n}$ for $x\in \mathbb{B}_{\sigma_n}$ and $v_n\in\mathcal{S}_r^{\lambda_n}$, 
we find 
$$
B \Delta^2 v_n - T \Delta v_n = -\lambda_n g(v_n) \le - 1 \;\;\text{ in }\;\; \mathbb{B}_{\sigma_n}\ .
$$
Consequently,
$$
\partial_r \left( r^{d-1} \left( B \partial_r \mathfrak{w}_n(r) - T \partial_r \mathfrak{v}_n(r) \right) \right) \le - r^{d-1}\ , \qquad r\in (0,\sigma_n)\ ,
$$
and, because $r^{d-1} \partial_r \mathfrak{w}_n(r)$ and $r^{d-1} \partial_r \mathfrak{v}_n(r)$ both vanish as $r\to 0$, a first integration gives
$$
\partial_r \left( B \mathfrak{w}_n(r) - T \mathfrak{v}_n(r) \right) \le - \frac{r}{d}\ , \qquad r\in (0,\sigma_n)\ .
$$
We next integrate the above differential inequality over $(r,\sigma_n)$ to obtain
$$
B \mathfrak{w}_n(r) \ge B \mathfrak{w}_n(\sigma_n) + T \left( \mathfrak{v}_n(r) - \mathfrak{v}_n(\sigma_n) \right) + \frac{\sigma_n^2}{2d} - \frac{r^2}{2d}\ .
$$
Due to $\mathfrak{w}_n(\sigma_n)\ge 0$ by \eqref{c24}, we find
$$
B \partial_r \left( r^{d-1} \partial_r \mathfrak{v}_n(r) \right) =B r^{d-1}\mathfrak{w}_n(r) \ge T r^{d-1} \left( \mathfrak{v}_n(r) - \mathfrak{v}_n(\sigma_n) \right) + \frac{\sigma_n^2 r^{d-1}}{2d} - \frac{r^{d+1}}{2d}\ ,
$$
whence, after integrating once more, 
$$
B r^{d-1} \partial_r \mathfrak{v}_n(r) \ge T \int_0^r s^{d-1} \left( \mathfrak{v}_n(s) - \mathfrak{v}_n(\sigma_n) \right) \mathrm{d}s + \frac{\sigma_n^2 r^{d}}{2d^2} - \frac{r^{d+2}}{2d(d+2)}\ , \qquad r\in (0,\sigma_n)\ .
$$
Now, fix $r\in (0,\sigma)$. Owing to \eqref{c9} and \eqref{c23}, we may pass to the limit as $n\to\infty$ in the above inequality and deduce
$$
B r^{d-1} \partial_r \mathfrak{v}(r) \ge T \int_0^r s^{d-1} \left( \mathfrak{v}(s) - \mathfrak{v}(\sigma) \right) \mathrm{d}s + \frac{\sigma^2 r^{d}}{2d^2} - \frac{r^{d+2}}{2d(d+2)}\ .
$$
As $r\in (0,\sigma)$, it follows from \eqref{c24} that $\partial_r\mathfrak{v}(r)=0$ and $\mathfrak{v}(s)=-1=\mathfrak{v}(\sigma)$ for $s\in (0,r)$, so that we end up with
$$
0=B r^{d-1} \partial_r \mathfrak{v}(r) \ge \frac{r^{d}}{2d^2(d+2)} \left[ (d+2) \sigma^2  - d r^{2} \right] > 0\ ,
$$
and thus a contradiction. We have thus shown that
\begin{equation}
\lim_{n\to\infty} \sigma_n = 0\ . \label{c25}
\end{equation}

Let then $r\in (0,1)$ be arbitrary. Owing to \eqref{c25}, there is $N_r\ge 1$ large enough such that $\sigma_n\in (0,r)$ for $n\ge N_r$. Recalling the definition of $\sigma_n$, this means that, for $n\ge N_r$ and $x\in\unb\setminus \bar{\mathbb{B}}_r$, we have $1+v_n(x)\ge \sqrt{\lambda_n}$ and thus $\lambda_n g(v_n)\le 1$ in $\unb\setminus \bar{\mathbb{B}}_r$. Since $(v_n)_{n\ge N_r}$ is bounded in $H_D^2(\unb)$ by Lemma~\ref{lec1} and $v_n\in \mathcal{S}_r^{\lambda_n}$, we conclude that $(v_n)_{n\ge N_r}$ is bounded in $H_D^4(\unb\setminus \bar{\mathbb{B}}_r)$. The convergence \eqref{c9} then entails 
$$
\omega\in H_D^4(\unb\setminus \bar{\mathbb{B}}_r) \;\;\text{ for all }\;\; r\in (0,1)\ . 
$$
Assume now  $a>0$ for contradiction. Since $\omega\equiv -1$ in $\mathbb{B}_a$, the just established regularity of $\omega$  leads us to
\begin{equation}
0=1+\mathfrak{v}(a) = \partial_r \mathfrak{v}(a) = \partial_r^2 \mathfrak{v}(a) = \partial_r^3 \mathfrak{v}(a) \ . \label{c26}
\end{equation}
Multiplying \eqref{c122} by $\omega$, integrating over $\unb\setminus \bar{\mathbb{B}}_a$ and using \eqref{c12b} and \eqref{c26} give
$$
\int_{\unb\setminus \bar{\mathbb{B}}_a} \left( B |\Delta \omega|^2 + T |\nabla \omega|^2 \right) \mathrm{d}x =0\ .
$$
This implies that $\omega$ is constant in $\unb\setminus \bar{\mathbb{B}}_a$ and contradicts \eqref{c12b} and \eqref{c26}. Consequently,~\mbox{$a=0$} and the proof is complete.
\end{proof}

To finish off the proof of Theorem~\ref{C217}
it just remains to summarize our previous findings. 

\begin{proof}[Proof of Theorem~\ref{C217}]
Recall from Theorem~\ref{T19}, \eqref{AA}, Corollary~\ref{C20}, and \eqref{A0} that $\mathcal{A}=\{(\Lambda(s),U(s))\,:\, s>0 \}$ is a continuous curve with $$\lim_{s\rightarrow 0} (\Lambda(s),U(s))=(0,0)\ ,\quad (\Lambda(s_*),U(s_*))=(\lambda_*,u_{\lambda_*})\ ,$$ and $\mathcal{A}_0=\{(\lambda,u_\lambda)\,:\, \lambda\in (0,\lambda_*)\}$.
Part~(i) of Theorem~\ref{C217} now follows from Lemma~\ref{lec3} and Theorem~\ref{T19}~(iii).

Since $\Lambda(s)\rightarrow 0$ for $s\rightarrow\infty$ as just shown, we find for each $\lambda\in (0,\lambda_*)$ numbers $0<s_1<s_*<s_2$ depending on $\lambda$ such that $\Lambda(s_1)=\Lambda(s_2)=\lambda$. Since $s_1<s_*$, we have $U(s_1)\in\mathcal{A}_0$ and thus $U(s_1)\ge U(s_2)$ by Proposition~\ref{prb2}, Corollary~\ref{C20}, and \eqref{AA}. Moreover, $U(s_1)\not= U(s_2)$ since no bifurcation can occur along the curve $\mathcal{A}_0$ due to the implicit function theorem. This proves part~(ii) of Theorem~\ref{C217}.
\end{proof}

Theorem~\ref{TSSIntroduction} and Remark~\ref{ghoussoub} are now consequences of Theorem~\ref{T19}, Theorem~\ref{T21}, and Theorem~\ref{C217}.

\section{Well-posedness in general domains}\label{sec3}

 We shall now focus on the well-posedness of the dynamic problem. Let us recall that $\Omega$ is an arbitrary smooth domain in $\RR^d$ with $d=1,2$.

\subsection{Well-posedness for the hyperbolic problem}\label{sec31}

In this subsection, we prove Theorem~\ref{ThyperIntroduction}. To lighten the notation, we agree upon setting $\gamma=1$ in the following. We first reformulate \eqref{hyper1}-\eqref{hyper3} as a first-order Cauchy problem and use well-known results on cosine functions (see e.g. \cite[Section 5.5 $\&$ Section 5.6]{A04} for details).  Let us note that the self-adjoint operator $-A =-B\Delta^2+T\Delta$ with domain $H_D^4(\Omega)$ generates an analytic semigroup on $L_2(\Omega)$ with spectrum contained in $[\mathrm{Re}\, z<0]$, its inverse $A^{-1}$ is a compact linear operator on $L_2(\Omega)$, and the square root of $A$ is well-defined. Noticing that $A$ is associated with the continuous coercive form
$$
\langle u,v\rangle =\int_\Omega (B\Delta u\Delta v +T\nabla u\cdot \nabla v)\,\mathrm{d} x\ ,\quad u,v\in H_D^2(\Omega)\ ,
$$
the domain of the square root of $A$ is (up to equivalent norms) equal to  $H_D^2(\Omega)$. Consequently, the matrix operator
$$
\mathbb{A}:=\left(\begin{matrix} 0 & -1\\ A & 1\end{matrix}\right)
$$
with domain $D(\mathbb{A}):=H_D^4(\Omega)\times H_D^2(\Omega)$ generates a strongly continuous group on the Hilbert space $\mathbb{H}:=H_D^2(\Omega)\times L_2(\Omega)$. Writing ${\bf u}_0=(u^0,u^1)$, ${\bf u}=(u,\partial_tu)$, and 
$$
f({\bf u})=\left(\begin{matrix} 0\\-g(u)\end{matrix}\right)\quad\text{with}\quad g(u):= 1/(1+u)^2\ ,
$$
we may reformulate \eqref{hyper1}-\eqref{hyper3} as a Cauchy problem
\begin{equation}\label{CPhyp}
\dot{{\bf u}}+\mathbb{A} {\bf u}=\lambda f({\bf u})\ ,\quad t>0\ ,\qquad {\bf u}(0)={\bf u}_0
\end{equation}
in $\mathbb{H}$ with $\dot{{\bf u}}$ indicating the time derivative. Now, defining for $\kappa\in (0,1)$ the open subset $S(\kappa)$ of $\mathbb{H}$ by
$$
S(\kappa):=\{ u\in H_D^2(\Omega)\,:\, u > -1+\kappa\text{ in } \Omega\}\times L_2(\Omega)\ ,
$$
the function $f:S(\kappa)\rightarrow \mathbb{H}$ is uniformly Lipschitz continuous. A classical argument then entails the following proposition. 

\begin{proposition}\label{P1}
For each ${\bf u}_0\in S(\kappa)$, the Cauchy problem \eqref{CPhyp} has a unique maximal mild solution ${\bf u}=(u,\partial_t u)\in C([0,\tau_m),\mathbb{H})$ for some maximal time of existence $\tau_m=\tau_m({\bf u}_0) \in (0,\infty]$. If $\tau_m<\infty$, then  \begin{equation}\label{blowup0}
\liminf_{t\rightarrow\tau_m}\big(\min_{\bar\Omega} u(t)\big)=-1\ ,
\end{equation}
or
\begin{equation}\label{blowup}
\limsup_{t\rightarrow\tau_m} \|(u(t),\partial_t u(t))\|_{\mathbb{H}}=\infty\ .
\end{equation} 
\end{proposition}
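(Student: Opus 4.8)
The plan is to recast \eqref{CPhyp} as a fixed-point problem and apply the standard local existence theory for semilinear Cauchy problems governed by a strongly continuous group (or semigroup), exploiting that the nonlinearity $f$ is uniformly Lipschitz on the open set $S(\kappa)$. First I would recall, following \cite[Section~5.5~$\&$~Section~5.6]{A04}, that the variation-of-constants formula
\begin{equation*}
{\bf u}(t) = e^{-t\mathbb{A}}{\bf u}_0 + \lambda\int_0^t e^{-(t-s)\mathbb{A}} f({\bf u}(s))\,\mathrm{d}s
\end{equation*}
defines a contraction on $C([0,T],\overline{B_{\mathbb{H}}({\bf u}_0,\rho)})$ for $\rho$ small enough that this ball lies inside $S(\kappa)$ and for $T=T(\kappa,\rho,\|{\bf u}_0\|_{\mathbb{H}})$ small: indeed $\|e^{-t\mathbb{A}}\|_{\mathcal{L}(\mathbb{H})}\le M e^{\omega t}$ for some $M\ge 1$, $\omega\in\RR$, and $f$ restricted to that ball is Lipschitz with some constant $L(\kappa)$, so choosing $T$ with $M e^{\omega T}\lambda L(\kappa) T<1$ and $T$ also small enough to keep the orbit within the ball yields a unique fixed point. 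This produces a unique local mild solution ${\bf u}=(u,\partial_t u)\in C([0,T],\mathbb{H})$.

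Next I would assemble these local solutions into a unique maximal one by the usual continuation argument: let $\tau_m$ be the supremum of all $T>0$ for which a mild solution on $[0,T)$ exists; uniqueness on overlaps (again from the contraction estimate) glues these into a single ${\bf u}\in C([0,\tau_m),\mathbb{H})$. The length $T$ of the local existence interval can be bounded below in terms only of $\kappa'$ and $\|{\bf u}(t_0)\|_{\mathbb{H}}$ whenever the solution at time $t_0$ satisfies $u(t_0)>-1+\kappa'$ in $\bar\Omega$ with $\kappa'\in(0,1)$ and $\|{\bf u}(t_0)\|_{\mathbb{H}}$ is controlled; this uniform-lower-bound-on-the-lifespan is the engine of the blow-up alternative.

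For the dichotomy \eqref{blowup0}--\eqref{blowup} when $\tau_m<\infty$, I would argue by contraposition. Suppose both fail: then there exist $\kappa'\in(0,1)$ and a sequence $t_j\uparrow\tau_m$ with $u(t_j)\ge -1+\kappa'$ in $\bar\Omega$, while $\sup_{t\in[0,\tau_m)}\|{\bf u}(t)\|_{\mathbb{H}}=:R<\infty$. Actually one needs the lower bound $u(t)\ge -1+\kappa'$ to hold on a full left-neighbourhood of $\tau_m$, not merely along a sequence; this is where a little care is required. I would observe that, since $H^2_D(\Omega)\hookrightarrow C(\bar\Omega)$ for $d\in\{1,2\}$ with some embedding constant $C_{\mathrm{emb}}$, the map $t\mapsto u(t)\in C(\bar\Omega)$ is continuous, hence from the mild formulation and the Lipschitz bound on $f$ over the region $\{u\ge -1+\kappa'/2\}$ one gets $\|u(t)-u(s)\|_{C(\bar\Omega)}\le C_{\mathrm{emb}}\|{\bf u}(t)-{\bf u}(s)\|_{\mathbb{H}}$ with a modulus of continuity depending only on $R$, $\lambda$, $\kappa'$ and $M e^{\omega\tau_m}$; consequently if $\min_{\bar\Omega}u$ is $\ge -1+\kappa'$ at the times $t_j$ it stays $\ge -1+\kappa'/2$ on a fixed-size neighbourhood, so in fact $\inf_{[0,\tau_m)\times\bar\Omega}u>-1$. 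Combining this with $\|{\bf u}(t)\|_{\mathbb{H}}\le R$ and applying the uniform-lifespan estimate at a time $t_0$ close enough to $\tau_m$ yields a solution extending past $\tau_m$, contradicting maximality. Hence at least one of \eqref{blowup0}, \eqref{blowup} holds.

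The main obstacle is precisely this last point: upgrading ``the minimum stays bounded away from $-1$ along a sequence'' to ``on a neighbourhood of $\tau_m$'', i.e. propagating the pointwise lower bound in time using the $\mathbb{H}$-continuity of the mild solution together with the Sobolev embedding $H^2_D(\Omega)\hookrightarrow C(\bar\Omega)$ valid for $d\le 2$. Everything else -- the contraction, the gluing, the continuation criterion -- is entirely routine semigroup machinery, which is why the authors phrase Proposition~\ref{P1} as following from ``a classical argument.''
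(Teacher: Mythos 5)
Your overall strategy (contraction via the variation-of-constants formula for the group generated by $-\mathbb{A}$, gluing local solutions into a maximal one, and proving the alternative by contraposition) is exactly the ``classical argument'' the paper invokes, and your local existence and uniqueness part is fine. But your treatment of the alternative has two soft spots, one of which is a genuine gap. The harmless one first: the step you single out as ``where a little care is required'' is not needed. Since the mild solution satisfies $u(t)>-1$ on $\bar\Omega$ for every $t<\tau_m$, one has $\liminf_{t\to\tau_m}\min_{\bar\Omega}u(t)\ge -1$, so the failure of \eqref{blowup0} means precisely that this liminf is $>-1$, i.e.\ there exist $\kappa'\in(0,1)$ and $\delta>0$ with $u(t)\ge -1+\kappa'$ in $\bar\Omega$ for \emph{all} $t\in(\tau_m-\delta,\tau_m)$; together with the continuity of $t\mapsto u(t)$ in $C(\bar\Omega)$ on the compact interval $[0,\tau_m-\delta]$ this already yields $\inf_{[0,\tau_m)\times\Omega}u\ge -1+\kappa''$ for some $\kappa''>0$, with no upgrade from a sequential bound to perform.

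The genuine gap is in the quantitative claims that you call the ``engine'': a lower bound on the lifespan depending only on $\kappa'$ and $\|{\bf u}(t_0)\|_{\mathbb{H}}$, and a modulus of continuity of $t\mapsto{\bf u}(t)$ in $\mathbb{H}$ depending only on $R,\lambda,\kappa',Me^{\omega\tau_m}$. Neither follows from ``the mild formulation and the Lipschitz bound on $f$'': $-\mathbb{A}$ generates a strongly continuous \emph{group} with no smoothing, and strong continuity is not uniform on bounded subsets of $\mathbb{H}$, so the term $(e^{-h\mathbb{A}}-I){\bf u}(t_0)$ --- which you must control both to keep the contraction iterates inside $S(\kappa'/2)$ for a time depending only on $(\kappa',R)$ and to get your equicontinuity claim --- need not be small uniformly in $t_0$ when only $\|{\bf u}(t_0)\|_{\mathbb{H}}\le R$ is known (highly oscillatory data for a wave-type group defeat it). Two standard repairs are available. (a) Use the structure of the system: the first component satisfies $u\in C^1([0,\tau_m);L_2(\Omega))$ with $\|\partial_t u(t)\|_2\le R$, hence $t\mapsto u(t)$ is Lipschitz into $L_2(\Omega)$, and the interpolation inequality $\|v\|_\infty\le C\|v\|_2^{1/2}\|v\|_{H^2}^{1/2}$ (valid for $d\le 2$) combined with the $H^2$-bound gives a genuine uniform modulus of continuity of $u$ in $C(\bar\Omega)$, from which your restart argument works. (b) Avoid uniform lifespan estimates altogether: replace $g$ by the globally Lipschitz truncation $\tilde g(v)=g(\max\{v,-1+\kappa''/2\})$, whose Cauchy problem has a unique global mild solution; by uniqueness it coincides with ${\bf u}$ on $[0,\tau_m)$ where $u\ge -1+\kappa''$, and by continuity of its first component in $C(\bar\Omega)$ it stays above $-1+\kappa''/2$ on a short interval beyond $\tau_m$, hence solves the original problem there and contradicts the maximality of $\tau_m$. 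As written, without (a) or (b), the extension past $\tau_m$ is not justified.
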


To obtain more regularity on the mild solution  ${\bf u}$, let us consider an initial value in the domain of the generator $-\mathbb{A}$, that is, let ${\bf u}_0 \in \big(H_D^4(\Omega)\times H_D^2(\Omega)\big)\cap S(\kappa)$. Then, since $f$ is Lipschitz continuous,
it follows as in the proof of \cite[Theorem 6.1.6]{Pazy}  that  ${\bf u}: [0,\tau_m)\rightarrow \mathbb{H}$ is Lipschitz continuous and whence differentiable almost everywhere with respect to time. Consequently, we obtain (see also \cite[Corollary 4.2.11]{Pazy}):
 
\begin{corollary}\label{C31}
If ${\bf u}_0 \in \big(H_D^4(\Omega)\times H_D^2(\Omega)\big)\cap S(\kappa)$, then the mild solution ${\bf u}$ is actually a strong solution to \eqref{CPhyp}. That is,~${\bf u}$ is differentiable almost everywhere in time with $\dot{{\bf u}}\in L_1(0,\tau;\mathbb{H})$ for each $\tau\in (0,\tau_m)$ and $$\dot{{\bf u}}(t)=-\mathbb{A} {\bf u}(t)+f({\bf u}(t))$$ in $\mathbb{H}$ for almost every $t\in [0, \tau_m)$.
\end{corollary}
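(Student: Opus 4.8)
The plan is to start from the variation-of-constants representation
\begin{equation*}
\mathbf{u}(t)=e^{-t\mathbb{A}}\mathbf{u}_0+\lambda\int_0^t e^{-(t-s)\mathbb{A}}f(\mathbf{u}(s))\,\rd s\ ,\qquad t\in[0,\tau_m)\ ,
\end{equation*}
of the mild solution furnished by Proposition~\ref{P1}, first to promote its continuity in time to Lipschitz continuity, and then to invoke the standard semigroup theory for inhomogeneous Cauchy problems with data in the domain of the generator in order to identify it as a strong solution. Fix $\tau\in(0,\tau_m)$. Since $\mathbf{u}\in C([0,\tau_m),\mathbb{H})$, $H_D^2(\Omega)$ embeds continuously in $C(\bar\Omega)$ (recall $d\in\{1,2\}$), and each $u(t)$ satisfies $\min_{\bar\Omega}u(t)>-1$, a compactness argument on $[0,\tau]$ provides $\kappa'\in(0,1)$ with $\mathbf{u}(t)\in S(\kappa')$ for all $t\in[0,\tau]$, and $f$ is uniformly Lipschitz on $S(\kappa')$, with constant $L_f$ say. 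Moreover, as $-\mathbb{A}$ generates the group $(e^{-t\mathbb{A}})_{t\in\RR}$ on $\mathbb{H}$, the quantity $M_\tau:=\sup_{|t|\le\tau}\|e^{-t\mathbb{A}}\|_{\mathcal{L}(\mathbb{H})}$ is finite.

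\emph{Step 1: Lipschitz continuity of $\mathbf{u}$.} For $0\le t\le t+h\le\tau$, shifting the integration variable in the Duhamel term gives
\begin{align*}
\mathbf{u}(t+h)-\mathbf{u}(t) & = e^{-t\mathbb{A}}\big(e^{-h\mathbb{A}}\mathbf{u}_0-\mathbf{u}_0\big)+\lambda\int_{-h}^{0}e^{-(t-\sigma)\mathbb{A}}f(\mathbf{u}(\sigma+h))\,\rd\sigma \\
& \quad {}+\lambda\int_0^t e^{-(t-\sigma)\mathbb{A}}\big(f(\mathbf{u}(\sigma+h))-f(\mathbf{u}(\sigma))\big)\,\rd\sigma\ .
\end{align*}
The assumption $\mathbf{u}_0\in D(\mathbb{A})=H_D^4(\Omega)\times H_D^2(\Omega)$ enters precisely in the first term: one has $e^{-h\mathbb{A}}\mathbf{u}_0-\mathbf{u}_0=-\int_0^h e^{-s\mathbb{A}}\mathbb{A}\mathbf{u}_0\,\rd s$, hence $\|e^{-h\mathbb{A}}\mathbf{u}_0-\mathbf{u}_0\|_{\mathbb{H}}\le M_\tau\|\mathbb{A}\mathbf{u}_0\|_{\mathbb{H}}\,h$. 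Bounding the second term by $\lambda M_\tau\big(\sup_{[0,\tau]}\|f(\mathbf{u})\|_{\mathbb{H}}\big)h$ and the third by $\lambda M_\tau L_f\int_0^t\|\mathbf{u}(\sigma+h)-\mathbf{u}(\sigma)\|_{\mathbb{H}}\,\rd\sigma$, we obtain $\phi(t):=\|\mathbf{u}(t+h)-\mathbf{u}(t)\|_{\mathbb{H}}\le C_\tau h+\lambda M_\tau L_f\int_0^t\phi(\sigma)\,\rd\sigma$ with $C_\tau$ independent of $h$, so that Gronwall's lemma yields $\phi(t)\le C_\tau e^{\lambda M_\tau L_f\tau}h$. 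Thus $\mathbf{u}\in W^{1,\infty}(0,\tau;\mathbb{H})$; since $\mathbb{H}$ is a Hilbert space, hence enjoys the Radon--Nikodym property, $\mathbf{u}$ is differentiable for a.e.\ $t\in[0,\tau]$, agrees with the indefinite integral of $\dot{\mathbf{u}}$, and $\dot{\mathbf{u}}\in L_\infty(0,\tau;\mathbb{H})\subset L_1(0,\tau;\mathbb{H})$. Consequently $g:=\lambda f(\mathbf{u})$, being the composition of the Lipschitz maps $f|_{S(\kappa')}$ and $\mathbf{u}$, lies in $W^{1,\infty}(0,\tau;\mathbb{H})\subset W^{1,1}(0,\tau;\mathbb{H})$.

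\emph{Step 2: strong solution.} With $\mathbf{u}_0\in D(\mathbb{A})$ and $g\in W^{1,1}(0,\tau;\mathbb{H})$, the mild solution is a strong one, as in the classical theory; cf.\ \cite[Corollary~4.2.11]{Pazy}. Concretely, inserting $g(s)=g(0)+\int_0^s\dot g(r)\,\rd r$ into the Duhamel integral and using Fubini's theorem, the relation $\mathbb{A}\int_0^\rho e^{-\sigma\mathbb{A}}\,\rd\sigma=I-e^{-\rho\mathbb{A}}$, and the closedness of $\mathbb{A}$, one checks that $\mathbf{u}(t)\in D(\mathbb{A})$ for every $t\in[0,\tau]$, with $t\mapsto\mathbb{A}\mathbf{u}(t)$ continuous; differentiating the Duhamel formula then gives $\dot{\mathbf{u}}(t)=-\mathbb{A}\mathbf{u}(t)+\lambda f(\mathbf{u}(t))$ in $\mathbb{H}$ for a.e.\ (in fact every) $t\in[0,\tau]$. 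As $\tau\in(0,\tau_m)$ was arbitrary, this yields the claim.

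I do not expect a genuine obstacle, this being essentially the textbook argument for semilinear problems with a globally Lipschitz nonlinearity. The one point deserving care is the Lipschitz estimate of Step~1, and in particular the use of $\mathbf{u}_0\in D(\mathbb{A})$ to control $\|(e^{-h\mathbb{A}}-I)\mathbf{u}_0\|_{\mathbb{H}}$ linearly in $h$; dropping this regularity assumption, the same computation only recovers continuity of $\mathbf{u}$ in time, which does not suffice to pass to the strong formulation.
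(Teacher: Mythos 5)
Your argument is correct and is essentially the paper's proof: the paper simply invokes the Lipschitz-continuity argument of \cite[Theorem~6.1.6]{Pazy} (which is exactly your Step~1 with the $D(\mathbb{A})$ bound on $\|(e^{-h\mathbb{A}}-I)\mathbf{u}_0\|_{\mathbb{H}}$ and Gronwall) together with \cite[Corollary~4.2.11]{Pazy} (your Step~2), relying as you do on a.e.\ differentiability of Lipschitz curves in the Hilbert space $\mathbb{H}$. You have merely written out the details that the paper delegates to those references.
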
 

As a consequence, since ${\bf u}=(u,\partial_t u)$, we deduce under the assumption of Corollary~\ref{C31} that, for each $\tau\in (0,\tau_m)$,
$$
\partial_t^k u\in C([0, \tau_m), H_D^{2-2k}(\Omega))\ ,\quad \partial_t^{k+1} u\in  L_1(0, \tau ; H_D^{2-2k}(\Omega))\ ,
$$ for $k=0,1$ and
\begin{equation}\label{pp}
(B\Delta^2-T\Delta)u=-\partial_t^2 u-\partial_t u-\lambda (1+u)^{-2}\ .
\end{equation}
Since the right-hand side of \eqref{pp} belongs to $L_1(0, \tau ; L_2(\Omega))$, we deduce $u\in L_1(0, \tau ; H_D^4(\Omega))$. Now, testing \eqref{pp} by $\partial_t u\in C([0,\tau_m),L_2(\Omega))$ results in
\begin{equation}\label{ppp}
\frac{1}{2}\frac{\mathrm{d}}{\mathrm{d} t}\left(\int_\Omega\vert\partial_t u\vert^2\,\mathrm{d} x +\int_\Omega \big(B\vert \Delta u\vert^2 +T\vert\nabla u\vert^2\big)\,\mathrm{d} x -2\lambda \int_\Omega\frac{1}{1+u}\,\mathrm{d} x\right)=-\int_\Omega\vert\partial_t u\vert^2\,\mathrm{d} x
\end{equation}
almost everywhere in $[0,\tau_m)$. Assume now that $\tau_m<\infty$ and that \eqref{blowup0} does not occur. Then $(1+u)^{-1}\in L_\infty((0,\tau_m)\times \Omega)$ so that \eqref{blowup} cannot occur as well, whence a contradiction. Consequently, $\tau_m<\infty$ implies \eqref{blowup0}.

To finish off the proof of Theorem~\ref{ThyperIntroduction}, it remains to show that the solution exists globally in time for small $\lambda$ and small initial values. Recall that $H^2(\Omega)$ embeds continuously in $L_\infty(\Omega)$ since $d=1,2$ and let  $c_4>0$ be such that
$$
\| v\|_\infty^2\le c_4\big( B\|\Delta v\|_2^2 + T\|\nabla v\|_2^2\big)\ ,\quad v\in H_D^2(\Omega)\ .
$$
Then we can prove the following result on global existence:

\begin{corollary}\label{C32}
For each $\kappa\in (0,1/2)$, there exists $\lambda_1(\kappa)>0$ such that $\tau_m=\infty$ provided that $\lambda\le \lambda_1(\kappa)$ and
${\bf u}_0 \in \big(H_D^4(\Omega)\times H_D^2(\Omega)\big)\cap S(2\kappa)$ with 
$$
 B\|\Delta u^0\|_2^2+T\|\nabla u^0\|_2^2 +\| u^1\|_2^2\le \frac{(1-2\kappa)^2}{c_4}\ .
$$
\end{corollary}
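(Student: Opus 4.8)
The proof runs a continuation argument built on the energy identity \eqref{ppp}, which is already available for strong solutions, i.e. for ${\bf u}_0\in\big(H_D^4(\Omega)\times H_D^2(\Omega)\big)\cap S(2\kappa)$. First I would integrate \eqref{ppp} in time: since ${\bf u}=(u,\partial_t u)$ is Lipschitz continuous into $\mathbb{H}$ and $u$ stays separated from $-1$ on every compact subinterval of $[0,\tau_m)$, the functional
$$
E(t):=\frac12\|\partial_t u(t)\|_2^2+\frac12\big(B\|\Delta u(t)\|_2^2+T\|\nabla u(t)\|_2^2\big)-\lambda\int_\Omega\frac{\rd x}{1+u(t)}
$$
is locally absolutely continuous on $[0,\tau_m)$, whence $E(t)+\int_0^t\|\partial_t u(s)\|_2^2\,\rd s=E(0)$ and in particular $E(t)\le E(0)$ for all $t\in[0,\tau_m)$. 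Since $u^0>-1$ in $\Omega$, the last term in $E(0)$ is non-positive, so the smallness assumption on ${\bf u}_0$ gives
$$
E(0)\le\frac12\big(\|u^1\|_2^2+B\|\Delta u^0\|_2^2+T\|\nabla u^0\|_2^2\big)\le\frac{(1-2\kappa)^2}{2c_4}\ .
$$

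Next I would set $T^*:=\sup\big\{t\in[0,\tau_m)\,:\, u(s)\ge-1+\kappa\text{ in }\Omega\text{ for all }s\in[0,t]\big\}$. As ${\bf u}_0\in S(2\kappa)$ we have $u^0\ge-1+2\kappa>-1+\kappa$ in $\bar\Omega$ (recall $\kappa<1/2$, so also $u^0=0>-1+\kappa$ on $\partial\Omega$), and the continuity of $u:[0,\tau_m)\to H_D^2(\Omega)\hookrightarrow C(\bar\Omega)$ gives $T^*>0$. For $t\in[0,T^*)$ we have $(1+u(t))^{-1}\le\kappa^{-1}$ in $\Omega$, so $E(t)\le E(0)$ and the definition of $c_4$ yield
$$
\|u(t)\|_\infty^2\le c_4\big(B\|\Delta u(t)\|_2^2+T\|\nabla u(t)\|_2^2\big)\le c_4\Big(2E(0)+\frac{2\lambda|\Omega|}{\kappa}\Big)\le(1-2\kappa)^2+\frac{2c_4\lambda|\Omega|}{\kappa}\ .
$$
Since $(1-\kappa)^2-(1-2\kappa)^2=\kappa(2-3\kappa)>0$ for $\kappa\in(0,1/2)$, choosing $\lambda_1(\kappa):=\kappa^2(2-3\kappa)/(4c_4|\Omega|)>0$ ensures that, whenever $\lambda\le\lambda_1(\kappa)$, the right-hand side above is bounded by $(1-2\kappa)^2+\tfrac12\kappa(2-3\kappa)<(1-\kappa)^2$; hence $\|u(t)\|_\infty<1-\kappa$, i.e. $u(t)>-1+\kappa$ in $\bar\Omega$, with a margin independent of $t$, for every $t\in[0,T^*)$. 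The usual continuity argument now excludes $T^*<\tau_m$: in that case $u(T^*)$ is defined and $\|u(T^*)\|_\infty<1-\kappa$ by continuity, so $\min_{\bar\Omega}u(s)>-1+\kappa$ for $s$ in a right-neighbourhood of $T^*$, contradicting the maximality of $T^*$. Thus $T^*=\tau_m$.

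It then remains to invoke Proposition~\ref{P1}. Since $u(t)\ge-1+\kappa$ in $\Omega$ for all $t\in[0,\tau_m)$, alternative \eqref{blowup0} is impossible; moreover $E(t)\le E(0)$ together with $(1+u(t))^{-1}\le\kappa^{-1}$ gives $\|\partial_t u(t)\|_2^2\le2E(0)+2\lambda|\Omega|/\kappa$ as well as the uniform bound on $B\|\Delta u(t)\|_2^2+T\|\nabla u(t)\|_2^2$ obtained above, so $\|(u(t),\partial_t u(t))\|_{\mathbb{H}}$ stays bounded on $[0,\tau_m)$ and alternative \eqref{blowup} is impossible too. Proposition~\ref{P1} forces $\tau_m=\infty$, which is the assertion of Corollary~\ref{C32}, and the same uniform bounds give $u\in L_\infty(0,\infty;H^2(\Omega))$ and $\inf_{(0,\infty)\times\Omega}u\ge-1+\kappa>-1$, thereby also completing the proof of Theorem~\ref{ThyperIntroduction}. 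There is no real difficulty beyond the elementary bookkeeping that turns the hypothesis --- a bound by $(1-2\kappa)^2/c_4$ --- into the conclusion --- a bound by $(1-\kappa)^2$ --- by absorbing the gap $\kappa(2-3\kappa)$ into the smallness of $\lambda$; the one point to be careful about is the local absolute continuity of $E$ needed to integrate \eqref{ppp}.
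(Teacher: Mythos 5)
Your argument is correct and is essentially the paper's own proof: the same continuation set ($T_0$ in the paper, your $T^*$), the same time-integrated energy identity \eqref{ppp} yielding the bound $B\|\Delta u(t)\|_2^2+T\|\nabla u(t)\|_2^2\le B\|\Delta u^0\|_2^2+T\|\nabla u^0\|_2^2+\|u^1\|_2^2+2\lambda|\Omega|/\kappa$, the same use of $c_4$ to convert this into $\|u(t)\|_\infty^2\le(1-2\kappa)^2+2\lambda c_4|\Omega|/\kappa\le(1-\kappa)^2$ for $\lambda\le\lambda_1(\kappa)$, and the same conclusion via Proposition~\ref{P1}. Your only additions (an explicit value of $\lambda_1(\kappa)$ and the remark on local absolute continuity of the energy) are harmless refinements of the same route.
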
 

\begin{proof}
Since $u^0\ge -1+2\kappa$, we have
$$
T_0:=\sup\{\tau\in (0,\tau_m)\,:\, u(t)\ge -1+\kappa\,,\, t\in [0,\tau)\} >0\ 
$$
and $(1+u(t))^{-1}\le \kappa^{-1}$ for $t\in [0,T_0)$. From \eqref{ppp},
\begin{equation}\label{33}
B\|\Delta u(t)\|_2^2+T\|\nabla u(t)\|_2^2\le  B\|\Delta u^0\|_2^2+T\|\nabla u^0\|_2^2 +\| u^1\|_2^2 +\frac{2\lambda \vert\Omega\vert}{\kappa}
\end{equation}
for $t\in [0,T_0)$ and therefore
$$
\|u(t)\|_\infty^2\le (1-2\kappa)^2+ \frac{2\lambda c_4\vert\Omega\vert}{\kappa} \le (1-\kappa)^2\ ,\quad t\in [0,T_0)\ ,
$$
if $\lambda\le \lambda_1(\kappa)$ with $\lambda_1(\kappa)>0$ sufficiently small.  Consequently, $T_0=\tau_m$ from which $\tau_m=\infty$ by Proposition~\ref{P1}.
\end{proof}

Note that $u\in L_\infty (0,\infty; H_D^2(\Omega))$ and $u(t)\ge -1+\kappa$ for $t\ge 0$ due to   \eqref{33} and $T_0=\infty$.

\begin{remark}\label{3something}
If $\Omega=\unb$, the rotational invariance of  \eqref{hyper1} and the uniqueness of solutions guarantee that $u(t)$ is radially symmetric for each $t\in [0,\tau_m)$ provided that $(u^0,u^1)$ is radially symmetric.
\end{remark}
\begin{remark}
The proof of Theorem~\ref{ThyperIntroduction} is the same if the clamped boundary conditions \eqref{hyper2} are replaced by the pinned boundary conditions \eqref{pinned},
and we obtain a strong solution in this case as well. This improves the existence result for weak solutions in \cite{Gu10}.
\end{remark}

\subsection{Well-posedness for the parabolic problem}\label{sec32a}

To prove Theorem~\ref{TparaIntroduction} we first note that 
$$
g: \{ u\in H_D^{2}(\Omega)\,:\, u\ge -1+\kappa\text{ in } \Omega\}\rightarrow H^2(\Omega)\ ,\quad u\mapsto (1+u)^{-2}
$$
is uniformly Lipschitz continuous and recall that for instance $H^2(\Omega)\hookrightarrow H_D^{1/4}(\Omega)$. We then also recall that the operator $-A=-(B\Delta^2-T\Delta)$ with domain $H_D^4(\Omega)$  generates an analytic semigroup $\{e^{-tA}\,:\, t\ge 0\}$ on $L_2(\Omega)$ with 
$$
\|e^{-tA}\|_{\mathcal{L}(L_2(\Omega))}\le Me^{-\alpha t}\ ,\quad t\ge 0\ ,
$$ 
for some $\alpha >0$. Formulating \eqref{hyper1p}-\eqref{hyper3p} by means of the variation-of-constant formula
$$
u(t)=e^{-tA} u^0- \lambda\int_0^t e^{-(t-s)A}\, g(u(s))\,\mathrm{d} s\ ,\quad t\ge 0\ ,
$$ 
the proof of Theorem~\ref{TparaIntroduction} can be performed by a classical fixed point argument. In particular, the exponential decay of the semigroup entails global existence for small values of $\lambda$ as stated in Theorem~\ref{TparaIntroduction} (iii) (e.g. see \cite[Theorem 1.2 (i)]{ELW1} for details). As in Remark~\ref{3something}, if $\Omega=\unb$, we easily see that $u(t)$ is radially symmetric for each $t\in [0,\tau_m)$ provided that $u^0$ is radially symmetric.

\section{Touchdown in the ball}\label{sec4}

In this last section we return to the case $\Omega=\unb$ and take advantage of the fact that a positive eigenfunction $\phi_1>0$ to the operator $A$ is available, see \cite{LWxx} and Lemma~\ref{L0}. We employ the eigenfunction method as e.g. in \cite{CL81} to show the occurrence of a singularity in finite time as stated in  Proposition~\ref{asterix} and Proposition~\ref{Majestix}.

\subsection{General initial conditions}\label{sec41}

\begin{proof}[Proof of Proposition~\ref{asterix}] 
Recall that Lemma~\ref{L0} ensures the existence of $m_1>0$ and $\phi_1\in C_{D,r}^{4}(\cunb)$ with $\phi_1>0$ in $\unb$, $\|\phi_1\|_{1}=1$, and
$A\phi_1 =m_1 \phi_1$.  
Let $u$ be the maximal solution on $[0,\tau_m)$ to \eqref{hyper1}-\eqref{hyper3} if $\gamma>0$ or \eqref{hyper1p}-\eqref{hyper3p} if $\gamma=0$ corresponding to the initial value $(u^0,u^1)$ and define,
for $t\in [0,\tau_m)$,
\begin{equation}
N(t) := \int_{\unb} \phi_1(x) u(t,x)\ \mathrm{d}x \ge - \int_{\unb} \phi_1(x)\ \mathrm{d}x = -1\ . \label{aq1}
\end{equation}
Assume that
\begin{equation}
\lambda > \frac{4m_1}{27} \ . \label{aq0}
\end{equation}
We multiply \eqref{hyper1} by $\phi_1$, integrate over $\unb$, and use the properties of $\phi_1$, the convexity of $g$, and Jensen's inequality to obtain
\begin{align}
\gamma^2 \frac{\mathrm{d}^2 N}{\mathrm{d}t^2} + \frac{\mathrm{d} N}{\mathrm{d}t} \le & - \int_{\unb} (B \Delta^2 \phi_1 - T \Delta\phi_1) u\ \mathrm{d}x - \lambda g\left( \int_{\unb} \phi_1 u\ \mathrm{d}x \right) \nonumber \\
\le & - m_1 N - \lambda g(N)\ . \label{aq2}
\end{align}
Setting $\chi(z) := m_1 z + \lambda g(z)$ for $z\in (-1,\infty)$, we note that 
\begin{equation*}
\chi \text{ is decreasing in } \left( -1 , z_\lambda \right) \text{ and increasing in } \left( z_\lambda , \infty \right) \ ,
\end{equation*}
where $z_\lambda := (2\lambda/m_1)^{1/3} - 1$. The choice $\lambda>4m_1/27$ guarantees $\chi(z)\ge \chi(z_\lambda)>0$ for $z\in (-1,\infty)$. We then infer from \eqref{aq1} and \eqref{aq2} that, for $t\in [0,\tau_m)$,
\begin{equation*}
\frac{\mathrm{d} N}{\mathrm{d}t}(t) \le -\chi(z_\lambda) \;\;\text{ and }\;\; -1< N(t) \le N(0) - \chi(z_\lambda) t \ ,
\end{equation*}
if $\gamma=0$, respectively
$$
\frac{\mathrm{d} N}{\mathrm{d}t}(t) \le  e^{-t/\gamma^2} \left[ \frac{\mathrm{d} N}{\mathrm{d}t}(0) + \chi(z_\lambda) \right] -\chi(z_\lambda) 
$$
and
$$
-1 < N(t) \le  N(0) + \gamma^2 \left[ \frac{\mathrm{d} N}{\mathrm{d}t}(0) + \chi(z_\lambda) \right] \left( 1 - e^{-t/\gamma^2} \right) - \chi(z_\lambda) t 
$$
if $\gamma>0$. Consequently,
$$
\tau_m \le \left[ 1 + N(0) + \gamma^2 \left( \left| \frac{\mathrm{d} N}{\mathrm{d}t}(0) \right| + \chi(z_\lambda) \right) \right] \frac{1}{\chi(z_\lambda)}<\infty\ .
$$
This completes the proof of Proposition~\ref{asterix}.
\end{proof}

\subsection{Radially symmetric initial data}\label{sec42}

Roughly speaking, the proof of Proposition~\ref{Majestix} proceeds along the same lines of that of Proposition~\ref{asterix} but takes advantage of the properties of the linearization of \eqref{stat1} for $\lambda_*$ described in Lemma~\ref{leb3c} and Proposition~\ref{prb7}.

\begin{proof}[Proof of Proposition~\ref{Majestix}]
Fix $\lambda>\lambda_*$ and let $u$ be the maximal solution on $[0,\tau_m)$ to \eqref{hyper1}-\eqref{hyper3} if $\gamma>0$ or \eqref{hyper1p}-\eqref{hyper3p} if $\gamma=0$ corresponding to the initial value $(u^0,u^1)$. Recall that $\mu_1(u_{\lambda_*})=0$ by Proposition~\ref{prb7} and that there exists a corresponding positive eigenfunction $\phi_*\in C_{D,r}^4(\cunb)$ to the operator $A+\lambda_* g'(u_{\lambda_*})$ according to Lemma~\ref{leb3c}, which we normalize so that $\|\phi_*\|_1=1$. For $t\in [0,\tau_m)$, define
\begin{equation}
M(t) := \int_{\unb} u(t,x) \phi_*(x)\, \mathrm{d}x \ge - \int_{\unb} \phi_*(x)\, \mathrm{d}x = -1\ . \label{q4}
\end{equation}
As in \cite[Theorem~4.1]{Gu10}, we multiply \eqref{hyper1} by $\phi_*$, integrate over $\unb$, and use the equation satisfied by $u_{\lambda_*}$ to obtain
\begin{align*}
\gamma^2 \frac{\mathrm{d}^2 M}{\mathrm{d}t^2} + \frac{\mathrm{d} M}{\mathrm{d}t} = & - \int_{\unb} \phi_* \left( B \Delta^2 u - T \Delta u + \lambda g(u) \right) \mathrm{d}x \\
& + \int_{\unb} \phi_* \left( B \Delta^2 u_{\lambda_*} - T \Delta u_{\lambda_*} + \lambda_* g(u_{\lambda_*}) \right) \mathrm{d}x \\
= &  - \int_{\unb}  (u - u_{\lambda_*}) \left( B \Delta^2 \phi_* - T \Delta \phi_*\right) \mathrm{d}x \\
&  - \int_{\unb} \phi_* \left( \lambda g(u) - \lambda_* g(u_{\lambda_*}) \right) \mathrm{d}x \\
= & \int_{\unb} \phi_* \left[ - \lambda g(u) + \lambda_* g(u_{\lambda_*}) + \lambda_* g'(u_{\lambda_*}) (u-u_{\lambda_*}) \right]\, \mathrm{d}x\ .
\end{align*}
It follows from the convexity of $g$ and Jensen's inequality that 
$$
\lambda_* g(u_{\lambda_*}) + \lambda_* g'(u_{\lambda_*}) (u-u_{\lambda_*}) \le \lambda_* g(u) \;\;\text{ and }\;\; \int_{\unb} g(u) \phi_*\, \mathrm{d}x \ge g(M)\ .
$$
Therefore, owing to the positivity of $\phi_*$ and $\lambda-\lambda_*$,
\begin{align}
\gamma^2 \frac{\mathrm{d}^2 M}{\mathrm{d}t^2} + \frac{\mathrm{d} M}{\mathrm{d}t} \le & \int_{\unb} \phi_* \left( - \lambda g(u) + \lambda_* g(u) \right) \mathrm{d}x \nonumber \\
\le & -(\lambda-\lambda_*) g(M)\ . \label{q6}
\end{align}
Since $(\lambda-\lambda_*) g(M)\ge 0$, a first consequence of \eqref{q6} is that, for $t\in [0,\tau_m)$,
$$
\frac{\mathrm{d} M}{\mathrm{d}t}(t) \le 0 \;\;\text{ and }\;\; M(t)\le M(0)
$$
if $\gamma=0$ or
$$
\frac{\mathrm{d}}{\mathrm{d}t} \left( e^{t/\gamma^2} \frac{\mathrm{d} M}{\mathrm{d}t}(t) \right)\le 0 \;\;\text{ and }\;\; M(t) \le M(0) + \gamma^2 \frac{\mathrm{d} M}{\mathrm{d}t}(0) \left( 1 - e^{-t/\gamma^2} \right)
$$
if $\gamma>0$. In both cases,  for $t\in [0,\tau_m)$,
\begin{equation}
M(t) \le K_{0,\gamma} := M(0) + \gamma^2 \left| \frac{\mathrm{d} M}{\mathrm{d}t}(0) \right|\ . \label{q7}
\end{equation}
Recalling that $g$ is decreasing, we deduce from \eqref{q6} and \eqref{q7} that 
$$
\gamma^2 \frac{\mathrm{d}^2 M}{\mathrm{d}t^2} + \frac{\mathrm{d} M}{\mathrm{d}t} + (\lambda-\lambda_*) g(K_{0,\gamma}) \le 0\ , 
$$
whence, for $t\in (0,\tau_m)$,
$$
M(t) \le M(0) - (\lambda-\lambda_*) g(K_{0,\gamma}) t
$$
if $\gamma=0$ and
$$
M(t) \le M(0) - (\lambda-\lambda_*) g(K_{0,\gamma}) t + \gamma^2 \left( \frac{\mathrm{d} M}{\mathrm{d}t}(0) + (\lambda-\lambda_*) g(K_{0,\gamma}) \right) \left( 1 - e^{-t/\gamma^2} \right)
$$
if $\gamma>0$. We have thus shown that, for $t\in [0,\tau_m)$, 
$$
M(t) \le K_{1,\gamma} - (\lambda-\lambda_*) g(K_{0,\gamma}) t \ ,
$$
 where 
$$ K_{1,\gamma} := M(0) + \gamma^2 \left| \frac{\mathrm{d} M}{\mathrm{d}t}(0) + (\lambda-\lambda_*) g(K_{0,\gamma}) \right|\ .
$$
Recalling that $M(t)\ge -1$ for all $t\in [0,\tau_m)$ by \eqref{q4}, we end up with 
$$
-1 \le K_{1,\gamma} - (\lambda-\lambda_*) g(K_{0,\gamma}) t\ , \qquad t\in [0,\tau_m)\ .
$$
Consequently,
$$
\tau_m \le \frac{1+K_{1,\gamma}}{(\lambda-\lambda_*) g(K_{0,\gamma})} < \infty\ ,
$$
as claimed in Proposition~\ref{Majestix}.
\end{proof}

\appendix
\section{A Brezis-Merle estimate}\label{secbm}
 
We shall prove here in two space dimensions that solutions to the biharmonic equation with homogeneous Dirichlet boundary conditions and right hand sides in $L_1$ belong to $W_q^2$ for any $q\in (1,\infty)$ (a fact which is used in Lemma~\ref{lec1}). This result is strongly reminiscent of the celebrated Brezis-Merle inequality \cite{BM91} stating that solutions to the Laplace equation with homogeneous Dirichlet boundary conditions and right hand sides in $L_1$ belong to $L_q$ for any $q\in (1,\infty)$. The proof of Lemma~\ref{le.app1} below is actually very similar to that of \cite[Theorem 1]{BM91} and is given merely for the sake of completeness. Let us point out that, because of the clamped boundary conditions \eqref{hyper2s}, the result cannot  be deduced  directly from \cite[Theorem 1]{BM91} (in contrast to the case of pinned boundary conditions \eqref{pinned}).

\begin{lemma}\label{le.app1}
Let $d=2$ and $f\in L_2(\unb)$, $f\not\equiv 0$. Let $w\in H_D^4(\unb)$ be the unique solution to
\begin{equation}
\Delta^2 w = f \;\;\text{ in }\;\; \unb\ , \qquad w=\partial_\nu w = 0 \;\;\text{ on }\;\; \uns\ . \label{bm1} 
\end{equation}
There are $\vartheta_0>0$ and $C_5>0$ independent of $f$ and $w$ such that
$$
\int_{\unb} \exp{\left( \frac{\vartheta_0 \left| D^2 w(x) \right|}{\| f\|_1} \right)}\ \rd x\le C_5 \ ,
$$
where $D^2w$ is the Hessian matrix of $w$.
Furthermore, given $q\in [1,\infty)$, there is $C_6(q)>0$ independent of $f$ and $w$ such that 
$$
\| w \|_{W_q^2} \le C_6(q)\ \|f\|_1\ .
$$
\end{lemma}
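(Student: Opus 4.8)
The plan is to follow the strategy of Brezis--Merle \cite{BM91}, adapted to the biharmonic operator with clamped boundary conditions. The key point is that the Green function $G$ for $\Delta^2$ on $\unb$ under \eqref{bm1} is explicitly known (Boggio's formula) and, crucially, the second-order kernel $\partial^2_{x_ix_j}G(x,y)$ has the same logarithmic singularity along the diagonal in dimension $d=2$ as the Newtonian kernel $\nabla^2(-\Delta)^{-1}$; that is, there is $C>0$ with $|D^2_xG(x,y)|\le C\log(2/|x-y|)$ for $x,y\in\cunb$. Granting this pointwise bound, the argument is a reduction to a scalar potential estimate.

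First I would normalise: by linearity it suffices to treat $\|f\|_1=1$. Writing $w(x)=\int_{\unb}G(x,y)f(y)\,\rd y$, one gets
\begin{equation*}
|D^2w(x)|\le C\int_{\unb}\log\!\Big(\frac{2}{|x-y|}\Big)\,|f(y)|\,\rd y\ .
\end{equation*}
Call the right-hand side $C\,V(x)$, where $V$ is the logarithmic potential of the probability measure $|f|\,\rd y$. The heart of the matter is the Jensen-type inequality of \cite[Theorem~1]{BM91}: for any probability measure $\mu$ on $\unb$ and any $\vartheta<4\pi$,
\begin{equation*}
\int_{\unb}\exp\!\Big(\vartheta\int_{\unb}\log\frac{2}{|x-y|}\,\rd\mu(y)\Big)\rd x\le C(\vartheta)\ ,
\end{equation*}
which follows from Jensen's inequality applied to the convex function $\exp$ together with the elementary fact that $\sup_{x}\int_{\unb}|x-y|^{-\vartheta/(2\pi)}\rd y<\infty$ when $\vartheta<4\pi$. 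Applying this with $\mu=|f|\,\rd y$ and $\vartheta_0:=4\pi/(2C)$ (say) yields
\begin{equation*}
\int_{\unb}\exp\!\Big(\frac{\vartheta_0|D^2w(x)|}{\|f\|_1}\Big)\rd x\le\int_{\unb}\exp\big(2\vartheta_0 C\,V(x)\big)\rd x=\int_{\unb}\exp\big(4\pi\,V(x)\big)\rd x\le C_5\ ,
\end{equation*}
after undoing the normalisation. The $W_q^2$ bound is then immediate: $e^{s}\ge s^q/q!$ gives $|D^2w|^q\le q!\,(\vartheta_0)^{-q}\|f\|_1^q\,e^{\vartheta_0|D^2w|/\|f\|_1}$, so $\|D^2w\|_q^q\le q!\,(\vartheta_0)^{-q}C_5\,\|f\|_1^q$; the lower-order terms $\|w\|_q$ and $\|\nabla w\|_q$ are controlled by $\|f\|_1$ in the same way (or simply by Sobolev/Poincar\'e once $D^2w$ is controlled, using the boundary conditions), giving $\|w\|_{W_q^2}\le C_6(q)\|f\|_1$.

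The main obstacle is the first step: establishing the pointwise logarithmic bound $|D^2_xG(x,y)|\le C\log(2/|x-y|)$ on the second derivatives of Boggio's Green function. One way is to use Boggio's explicit representation of $G$ on the ball and differentiate it twice, checking that the worst singularity is indeed logarithmic (the regular, ``image-charge'' part of $G$ being smooth up to the boundary, away from the corner behaviour which here is tamed by the clamped conditions). Alternatively, and perhaps more robustly, one decomposes $w=-\Delta^{-1}(\Delta^{-1}f)$ and applies the classical Brezis--Merle result twice: $v:=\Delta^{-1}f$ (with appropriate boundary data) lies in $L_p$ for all $p<\infty$ with $\|v\|_p\le C(p)\|f\|_1$ by \cite[Theorem~1]{BM91}, and then elliptic $L_p$ theory for $\Delta^2 w=f$ with $f\in L_1$ does not directly apply, so one must instead estimate $D^2w$ via the Newtonian potential of $v$ — which is where the logarithmic kernel reappears and the same exponential-integrability argument closes the loop. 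Either route is standard once the kernel estimate is in hand; I would present the Green-function version since $\unb$ is a ball and Boggio's formula is available, citing \cite{GGS10} for the explicit expression.
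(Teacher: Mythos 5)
Your proposal follows essentially the same route as the paper's proof: represent $D^2w$ through the second derivatives of the Green function, use the pointwise logarithmic bound on $\partial_{x_i}\partial_{x_j}G$, combine Jensen's inequality with the exponential-of-logarithm trick of Brezis--Merle to get uniform exponential integrability of $|D^2w|/\|f\|_1$, and then deduce the $W_q^2$ bound (with the Poincar\'e inequality handling the lower-order terms). Two minor remarks: the kernel estimate you flag as the main obstacle is available off the shelf as \cite[Theorem~4.7]{GGS10}, so no differentiation of Boggio's formula is needed; and your threshold $\vartheta<4\pi$ belongs to the normalized kernel $\frac{1}{2\pi}\log\frac{2}{|x-y|}$ --- with the unnormalized bound $K_0\ln\left(2+|x-y|^{-1}\right)$ the correct smallness condition in $d=2$ is $\vartheta K_0<2$ (as used in the paper), which is harmless since only some sufficiently small $\vartheta_0$ is required.
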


\begin{proof}
A classical density argument allows us to assume that $f\in C_0^\infty(\unb)$. Introducing the Green function $G$ associated with the operator $\Delta^2$ subject to homogeneous Dirichlet boundary conditions in $\unb$, it follows from \cite[Theorem~4.7]{GGS10} that there is $K_0>0$ such that
$$
\left| \partial_{x_i} \partial_{x_j} G(x,y) \right| \le K_0  \ln{\left( 2 + \frac{d(y,\uns)}{|x-y|} \right)}\ \min{\left\{ 1 , \frac{d(y,\uns)^2}{|x-y|^2} \right\} }
$$
for $(x,y)\in \cunb\times\cunb$ with $d(\cdot,\uns)$ denoting the distance to $\uns$. From this we deduce 
\begin{equation}
\left| \partial_{x_i} \partial_{x_j} G(x,y) \right| \le K_0  \ln{\left( 2 + \frac{1}{|x-y|} \right)}\ , \qquad (x,y)\in \cunb\times\cunb\ . \label{bm3}
\end{equation}
The solution $w$ to \eqref{bm1} can be written as
$$
w(x) = \int_{\unb} G(x,y) f(y)\ \rd y\ , \qquad x\in\cunb\ ,
$$
which further gives
\begin{equation}
\partial_{x_i} \partial_{x_j} w(x) =  \int_{\unb} \partial_{x_i} \partial_{x_j} G(x,y) f(y)\ \rd y\ , \qquad x\in\cunb\ , \quad 1\le i,j \le 2\ . \label{bm2}
\end{equation}

Now, let $\vartheta\in (0,2/K_0)$. We argue as in the proof of \cite[Theorem~1]{BM91} and use \eqref{bm3}, \eqref{bm2}, the convexity of $z\mapsto e^{\vartheta z}$, and Jensen's inequality to obtain
\begin{align*}
\int_{\unb} \exp{\left( \frac{\vartheta \left| \partial_{x_i} \partial_{x_j} w(x) \right|}{\| f\|_1} \right)}\ \rd x \le & \int_{\unb} \exp{\left( \vartheta \int_{\unb} \left| \partial_{x_i} \partial_{x_j} G(x,y) \right| \frac{f(y)}{\| f\|_1}\ \rd y \right)}\ \rd x \\
\le & \int_{\unb} \int_{\unb} \exp{\left( \vartheta \left| \partial_{x_i} \partial_{x_j} G(x,y) \right|\right)} \frac{f(y)}{\| f\|_1}\ \rd y \rd x \\
\le & \int_{\unb} \frac{f(y)}{\| f\|_1} \int_{\unb} \left( 2 + \frac{1}{|x-y|} \right)^{\vartheta K_0}\ \rd x \rd y \ .
\end{align*} 
For $(x,y)\in\cunb\times\cunb$, we have $x\in \mathbb{B}_2(y)$ and thus
\begin{equation}
\int_{\unb} \exp{\left( \frac{\vartheta \left| \partial_{x_i} \partial_{x_j} w(x) \right|}{\| f\|_1} \right)}\ \rd x \le \int_{\unb} \frac{f(y)}{\| f\|_1} \int_{\mathbb{B}_2(y)} \left( \frac{5}{|x-y|} \right)^{\vartheta K_0}\ \rd x \rd y \le C(\vartheta) \ ,\label{bm4}
\end{equation}
since $-\vartheta K_0>-2$.

\medskip

Consider next $q\in [1,\infty)$. We infer from \eqref{bm4} that
\begin{align*}
\left\| \partial_{x_i} \partial_{x_j} w \right\|_q^q \le & \|f\|_1^q \int_{\unb} \frac{\left| \partial_{x_i} \partial_{x_j} w(x) \right|^q}{\|f\|_1^q}\ \rd x \\
 \le & \|f\|_1^q\  \sup_{z\ge 0}{\left\{ z^q e^{-z/K_0} \right\}} \int_{\unb} \exp{\left( \frac{\left| \partial_{x_i} \partial_{x_j} w(x) \right|}{K_0 \| f\|_1} \right)}\ \rd x \\
\le & 
 C(1/K_0)\ \|f\|_1^q\ \sup_{z\ge 0}{\left\{ z^q e^{-z/K_0} \right\}} \ ,
\end{align*}
which, together with the Poincar\'e inequality, completes the proof.
\end{proof}

\begin{remark}\label{re.app}
According to \cite[Proposition~4.27]{GGS10}, the estimate \eqref{bm3} is valid for an arbitrary smooth domain $\Omega$ of $\RR^2$ (with a constant depending on $\Omega$) so that the validity of Lemma~\ref{le.app1} extends to arbitrary smooth domains of $\RR^2$.
\end{remark}

\section*{Acknowledgments}

This research was done while Ph.L. was enjoying the kind hospitality of the Institut f\"ur Angewandte Mathematik of the Leibniz Universit\"at Hannover. The work of Ph.L. was partially supported by the Centre International de Math\'ematiques et d'Informatique CIMI, Toulouse.

\bibliographystyle{amsplain}
\bibliography{SG4}

\end{document}